\theoremstyle{plain}
\newtheorem {lemma}{Lemma}
\newtheorem {theorem}[lemma]{Theorem}
\newtheorem*{SCT}{SCT}
\newtheorem {corollary}[lemma]{Corollary}
\theoremstyle{definition}
\newtheorem{definition}[lemma]{Definition}
\newtheorem{remark}[lemma]{Remark}
\newtheorem {example}[lemma]{Example}
\DeclareMathOperator*{\plus}{\overset{.}{\bigplus}}
\newcommand{\+}{\overset{.}{+}}
\newcommand{\minus}{\overset{.}{-}}
\newcommand{\h}{\mathfrak{H}}
\title[Sandwich classification revisited]{Sandwich classification for $O_{2n+1}(R)$ and $U_{2n+1}(R,\Delta)$ revisited}
\author{Raimund Preusser}
\address{Department of Mathematics,
University of Brasilia, Brazil}
\email{raimund.preusser@gmx.de}
\begin{document}
\maketitle
\begin{abstract}
In a recent paper, the author proved that if $n\geq 3$ is a natural number, $R$ a commutative ring and $\sigma\in GL_n(R)$, then $t_{kl}(\sigma_{ij})$ where $i\neq j$ and $k\neq l$ can be expressed as a product of $8$ matrices of the form $^{\epsilon}\sigma^{\pm 1}$ where $\epsilon\in E_n(R)$. In this article we prove similar results for the odd-dimensional orthogonal groups $O_{2n+1}(R)$ and the odd-dimensional unitary groups $U_{2n+1}(R,\Delta)$ under the assumption that $R$ is commutative and $n\geq 3$. This yields new, short proofs of the Sandwich Classification Theorems for the groups $O_{2n+1}(R)$ and $U_{2n+1}(R,\Delta)$.
\end{abstract}
\section{Introduction}
The Sandwich Classification Theorem (SCT) for the lattice of subgroups of a linear group $G$ that are normalized by the elementary subgroup $E$ of $G$ is one of central points in the structure theory of linear groups. For general rings, not only semilocal or arithmetic ones, the SCT was first proved
by H. Bass \cite{3,4} for $GL_n$ under the stable range condition. This condition allowed
him to extract a transvection as well as to prove the standard commutator formula. The first proofs of the SCT for $G = GL_n$ over a commutative ring by J. Wilson \cite{20} and I. Golubchik \cite{6} used only direct calculations as neither localization techniques nor the standard commutator formula were known at that time. About ten years later the SCT was proved applying the standard commutator formula by L. Vaserstein \cite{11} and, independently, by Z. Borewich and N. Vavilov \cite{5}. In
the latter article the authors introduced a trick to stabilize a column of a matrix whereas Vaserstein used localization. The next generation of the SCT proofs works for Chevalley groups. L. Vaserstein \cite{12} and E. Abe \cite{1} used localization methods whereas N. Vavilov, E. Plotkin and A. Stepanov \cite{19} introduced the decomposition of unipotents, which in more detail was described in \cite{10} and further developed in \cite{15, 16, 18, 17, 13, 14} by Vavilov and his students and in \cite{7} by V. Petrov. For generalized hyperbolic unitary groups the SCT was announced by A. Bak and N. Vavilov in \cite{2}, but the proof has never been published. It appeared later in \cite{8} (which is essentially the author's thesis). Recently A. Stepanov \cite{9} proved the SCT for all Chevalley groups using the universal localization method.

Let $n\geq 3$ be a natural number and $R$ a commutative ring. The SCT for $GL_n(R)$ states the following (it is also true over almost commutative rings, cf. \cite{11}):
\begin{SCT}
Let $H$ be a subgroup of $GL_n(R)$. Then $H$ is normalized by $E_n(R)$ iff 
\begin{equation*}
E_n(R,I)\subseteq H\subseteq C_n(R,I)
\end{equation*}
for some ideal $I$ of $R$.
\end{SCT}
\noindent The ideal $I$ in the SCT is uniquely determined, namely $I=I(H)=\{x\in R\mid t_{12}(x)\in H\}$. Let now $\sigma\in GL_n(R)$ and set $H:={}^{E_n(R)}\sigma$, i.e. $H$ is the smallest subgroup of $GL_n(R)$ which contains $\sigma$ and is normalized by $E_n(R)$. Then, by the SCT, $H\subseteq C_n(R,I)$ where $I=I(H)$. It follows from the definition of $C_n(R,I)$ that $\sigma_{ij}, \sigma_{ii}-\sigma_{jj}\in I$ for any $i\neq j$. Hence, by the definition of $I(H)$, the matrices $t_{12}(\sigma_{ij})$ and $t_{12}(\sigma_{ii}-\sigma_{jj})$ can be expressed as products of matrices of the form $^{\epsilon}\sigma^{\pm 1}$ where $\epsilon\in E_n(R)$. In \cite{preusser_sct} the author showed how one can use the theme of the paper \cite{10} in order to find such expressions and gave boundaries for the number of factors. This yielded a new, very simple proof of the SCT for $GL_n(R)$. Further the author obtained similar results for the even-dimensional orthogonal groups $O_{2n}(R)$ and the even-dimensional unitary groups $U_{2n}(R,\Lambda)$.

In this article we prove similar results for the odd-dimensional orthogonal groups $O_{2n+1}(R)$ and the odd-dimensional unitary groups $U_{2n+1}(R,\Delta)$ under the assumption that $R$ is commutative and $n\geq 3$, cf. Theorem \ref{mthm1} and Theorem \ref{mthm2}. The proof of the orthogonal version is quite simple. The proof of the unitary version is a bit more complicated, but still it is much shorter than the proof of the SCT for the groups $U_{2n+1}(R,\Delta)$ given in \cite{bak-preusser} (on the other hand, in \cite{bak-preusser} the ring $R$ is only assumed to be quasi-finite and hence the result is a bit more general). For the odd-dimensional unitary groups $U_{2n+1}(R,\Delta)$ this yields the first proof of the SCT which does not use localization.

The rest of the paper is organized as follows. In Section 2 we recall some standard notation
which will be used throughout the paper. In Section 3 we state two lemmas which will be used in the proofs of the main theorems \ref{mthm1} and \ref{mthm2}. In Section 4 we recall the definitions of the odd-dimensional orthogonal group $O_{2n+1}(R)$ and some important subgroups, in Section 5 we prove Theorem \ref{mthm1}. In Section 6 we recall the definitions of the odd-dimensional unitary group $U_{2n+1}(R,\Delta)$ and some important subgroups, in Section 7 we prove Theorem \ref{mthm2}.

\section{Notation}
By a natural number we mean an element of the set $\mathbb{N}:=\{1,2,3,\dots\}$. If $G$ is a group and $g,h\in G$, we let $^hg:=hgh^{-1}$ and $[g,h]:=ghg^{-1}h^{-1}$. By a ring we will always mean an associative ring with $1$ such that $1\neq 0$. Ideal will mean two-sided ideal. If $X$ is a subset of a ring $R$, then we denote by $I(X)$ the ideal of $R$ generated by $X$. If $X=\{x\}$, then we may write $I(x)$ instead of $I(X)$. If $n$ is a natural number and $R$ is a ring, then the set of all $n\times n$ matrices with entries in $R$ is denoted by $M_n(R)$. If $a\in M_{n}(R)$, we denote the entry of $a$ at position $(i,j)$ by $a_{ij}$, the $i$-th row of $a$ by $a_{i*}$ and the $j$-th column of $a$ by $a_{*j}$. The group of all invertible matrices in $M_{n}(R)$ is denoted by $GL_n(R)$ and the identity element of $GL_n(R)$ by $e$. If $a\in GL_n(R)$, then the entry of $a^{-1}$ at position $(i,j)$ is denoted by $a'_{ij}$, the $i$-th row of $a^{-1}$ by $a'_{i*}$ and the $j$-th column of $a^{-1}$ by $a'_{*j}$. Further we denote by $^n\!R$ the set of all row vectors of length $n$ with entries in $R$ and by $R^n$ the set of all column vectors of length $n$ with entries in $R$. We consider $^n\!R$ as left $R$-module and $R^n$ as right $R$-module. If $u\in{}^n\!R$ (resp. $u\in R^n$), we denote by $u^t$ its transpose in $R^n$ (resp. in $^n\!R$).
\section{Preliminaries}\label{secpre}
The following two lemmas are easy to check.
\begin{lemma}\label{pre}
Let $G$ be a group and $a,b,c\in G$. Then ${}^{b^{-1}}[a,bc]=[b^{-1},a][a,c]$.
\end{lemma}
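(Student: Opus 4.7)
The statement is a purely formal identity in an arbitrary group, with no hypotheses on $G$ beyond being a group, so the plan is simply to unfold both sides according to the definitions $[x,y]=xyx^{-1}y^{-1}$ and ${}^xy=xyx^{-1}$ given in Section~2 and check that they reduce to the same word in $a,b,c$ and their inverses.

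More concretely, I would first compute the left-hand side: by definition ${}^{b^{-1}}[a,bc]=b^{-1}[a,bc]b$, and $[a,bc]=a(bc)a^{-1}(bc)^{-1}=abca^{-1}c^{-1}b^{-1}$, so after conjugating by $b^{-1}$ the trailing $b^{-1}$ cancels against the appended $b$, leaving
\begin{equation*}
{}^{b^{-1}}[a,bc]=b^{-1}abca^{-1}c^{-1}.
\end{equation*}
Then I would expand the right-hand side: $[b^{-1},a]=b^{-1}aba^{-1}$ and $[a,c]=aca^{-1}c^{-1}$, so
\begin{equation*}
[b^{-1},a][a,c]=b^{-1}aba^{-1}\cdot aca^{-1}c^{-1}=b^{-1}abca^{-1}c^{-1},
\end{equation*}
since the inner $a^{-1}a$ collapses. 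Comparing the two expressions finishes the proof.

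There is no real obstacle here; the only thing to be careful about is the order of factors (the convention $[x,y]=xyx^{-1}y^{-1}$ rather than the alternative $x^{-1}y^{-1}xy$ fixes this), and the cancellations $b^{-1}\cdot b=e$ and $a^{-1}\cdot a=e$ appearing in the two computations are exactly what makes the identity work. Accordingly, the write-up can be limited to the two short chains of equalities above, which is presumably why the authors call the lemma ``easy to check''.
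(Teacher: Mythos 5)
Your computation is correct, and it is exactly the direct verification the paper has in mind: the paper gives no written proof, merely noting the lemma is ``easy to check,'' and both sides indeed reduce to the word $b^{-1}abca^{-1}c^{-1}$ under the stated conventions. Nothing to add.
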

\begin{lemma}\label{pre2}
Let $G$ be a group, $E$ a subgroup and $a\in G$. Suppose that $b\in G$ is a product of $n$ elements of the form $^{\epsilon}a^{\pm 1}$ where $\epsilon\in E$. Then for any $\epsilon'\in E$
\begin{enumerate}[(i)]
\item $^{\epsilon'}b$ is a product of $n$ elements of the form $^{\epsilon}a^{\pm 1}$ and
\item $[\epsilon',b]$ is a product of $2n$ elements of the form $^{\epsilon}a^{\pm 1}$.
\end{enumerate}
\end{lemma}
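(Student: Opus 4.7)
The plan is to peel the lemma into its two parts, each of which reduces to a basic identity about conjugation and inversion distributing over a product. For (i), I would write $b$ explicitly as $b = \prod_{i=1}^n {}^{\epsilon_i} a^{\eta_i}$ with $\epsilon_i \in E$ and $\eta_i \in \{+1, -1\}$. The key observation is that conjugation is a group homomorphism on $G$, so
\begin{equation*}
{}^{\epsilon'} b = \prod_{i=1}^n {}^{\epsilon'}\bigl({}^{\epsilon_i} a^{\eta_i}\bigr) = \prod_{i=1}^n {}^{\epsilon' \epsilon_i} a^{\eta_i}.
\end{equation*}
Since $E$ is a subgroup, each $\epsilon' \epsilon_i$ lies in $E$, and so this is once again a product of $n$ factors of the prescribed form.

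For (ii), I would start from the identity $[\epsilon', b] = ({}^{\epsilon'} b) \cdot b^{-1}$, which is just the definition of the commutator. Part (i) already guarantees that ${}^{\epsilon'} b$ has the required shape with $n$ factors. For $b^{-1}$, I would note that inverting the product reverses the order and flips each exponent:
\begin{equation*}
b^{-1} = \prod_{i=n}^{1} {}^{\epsilon_i} a^{-\eta_i},
\end{equation*}
which is again a product of $n$ factors of the form ${}^{\epsilon} a^{\pm 1}$. Concatenating the two gives $2n$ factors, as claimed.

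There is essentially no obstacle here: both statements are immediate from the facts that conjugation is a homomorphism and that inversion reverses a product, and the paper itself flags the two lemmas as easy to check. The only thing to be mildly careful about is to record that $E$ being closed under products is what lets us absorb $\epsilon'$ into each $\epsilon_i$ in (i), and that the same closure is implicit in (ii) when we concatenate the two factorizations.
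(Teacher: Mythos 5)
Your proof is correct and is exactly the routine verification the paper has in mind (it offers no written proof, merely remarking that the lemma is ``easy to check''): conjugation distributes over the product using closure of $E$, and $[\epsilon',b]=({}^{\epsilon'}b)\,b^{-1}$ concatenates two $n$-factor products. Nothing further is needed.
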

Lemma \ref{pre2} will be used in the proofs of the main theorems without explicit reference.
\section{Odd-dimensional orthogonal groups}
In this section $n$ denotes a natural number and $R$ a commutative ring. First we recall the definitions of the odd-dimensional orthogonal group $O_{2n+1}(R)$ and the elementary subgroup $EO_{2n+1}(R)$. For an admissible pair $(I,J)$, we recall the definitions of the following subgroups of $O_{2n+1}(R)$; the preelementary subgroup $EO_{2n+1}(I,J)$ of level $(I,J)$, the elementary subgroup $EO_{2n+1}(R,I,J)$ of level $(I,J)$, the principal congruence subgroup $O_{2n+1}(R,I,J)$ of level $(I,J)$, and the full congruence subgroup $CO_{2n+1}(R,I,J)$ of level $(I,J)$.
\subsection{The odd-dimensional orthogonal group}
\begin{definition}\label{defog}
Set $M:=R^{2n+1}$. We use the following indexing for the elements of the standard basis of $M$: $(e_1,\dots,e_n,e_0,e_{-n},\dots,e_{-1})$.
That means that $e_i$ is the column whose $i$-th coordinate is one and all the other coordinates are zero if $1 \leq i\leq n$, the column whose $(n+1)$-th coordinate is one and all the other coordinates are zero if $i=0$, and the column whose $(2n+2+i)$-th coordinate is one and all the other coordinates are zero if $-n\leq i \leq -1$. We define the quadratic form
\begin{align*}
Q:M&\rightarrow R\\u&\mapsto u^t\begin{pmatrix} 0 & 0& p \\0& 1&0 \\0&0&0\end{pmatrix}u=u_1u_{-1}+\dots+u_nu_{-n}+u_0^2
\end{align*} 
where $p\in M_n(R)$ denotes the matrix with ones on the skew diagonal and zeros elsewhere.
The subgroup $O_{2n+1}(R):=\{\sigma\in GL_{2n+1}(R)\mid Q(\sigma u)=Q(u) ~\forall u\in M\}$ of $GL_{2n+1}(R)$ is called {\it (odd-dimensional) orthogonal group}.
\end{definition}
\begin{remark}
The odd-dimensional orthogonal groups $O_{2n+1}(R)$ are special cases of the odd-dimensional unitary groups $U_{2n+1}(R,\Delta)$, see \cite[Example 15]{bak-preusser}.
\end{remark}
\begin{definition}
Define $\Theta:=\{1,\dots,n,0,-n,\dots,-1\}$ and $\Theta_{hb}:=\Theta\setminus \{0\}$.
\end{definition}
\begin{lemma}\label{6}
Let $\sigma\in GL_{2n+1}(R)$. Then $\sigma\in O_{2n+1}(R)$ iff the conditions (i) and (ii) below hold. 
\begin{enumerate}[(i)]
\item \begin{align*}
       \sigma'_{ij}&=\sigma_{-j,-i}~\forall i,j\in\Theta_{hb},\\
       2 \sigma'_{0j}&=\sigma_{-j,0}~\forall j\in\Theta_{hb},\\
       \sigma'_{i0}&=2\sigma_{0,-i} ~\forall i\in\Theta_{hb} \text { and}\\
       2 \sigma'_{00}&=2\sigma_{00}.
      \end{align*}
\item \begin{align*}
Q(\sigma_{*j})=\delta_{0j}~\forall j\in \Theta.
\end{align*} 
\end{enumerate}
\end{lemma}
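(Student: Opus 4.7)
The plan is to translate the condition $Q(\sigma u) = Q(u)$ into statements about the associated symmetric bilinear form. Write $F$ for the matrix of $Q$ from Definition \ref{defog}, so $Q(u) = u^t F u$, and set $G := F + F^t$, the matrix of the polarization $B(u,v) := Q(u+v) - Q(u) - Q(v) = u^t G v$. Under the indexing $\Theta$, the only nonzero entries of $G$ are $G_{i,-i} = G_{-i,i} = 1$ for $1 \leq i \leq n$ together with $G_{00} = 2$.

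Because $R$ is commutative, expanding $Q$ on a linear combination of vectors gives
\begin{equation*}
Q\Bigl(\sum_{j \in \Theta} u_j w_j\Bigr) = \sum_{j \in \Theta} u_j^2\, Q(w_j) + \sum_{j < k} u_j u_k\, B(w_j, w_k)
\end{equation*}
for arbitrary $w_j \in M$. Applying this with $w_j = e_j$ and with $w_j = \sigma e_j$ shows that $Q(\sigma u) = Q(u)$ for every $u \in M$ holds if and only if (a) $Q(\sigma e_j) = Q(e_j) = \delta_{0j}$ for every $j$, which is exactly (ii), and (b) $B(\sigma e_j, \sigma e_k) = B(e_j, e_k)$ for every $j \neq k$. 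Condition (a), combined with the identity $B(v,v) = 2\, Q(v)$, also forces the diagonal case of (b), so (a)+(b) together are equivalent to (ii) together with the matrix identity $\sigma^t G \sigma = G$.

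Since $\sigma \in GL_{2n+1}(R)$, the identity $\sigma^t G \sigma = G$ is equivalent to $\sigma^t G = G \sigma^{-1}$. The $(i,j)$-entry of $\sigma^t G$ is $\sigma_{-j,i}$ when $j \in \Theta_{hb}$ and $2 \sigma_{0,i}$ when $j = 0$, while the $(i,j)$-entry of $G \sigma^{-1}$ is $\sigma'_{-i,j}$ when $i \in \Theta_{hb}$ and $2 \sigma'_{0,j}$ when $i = 0$. Equating these and relabeling indices $i \mapsto -i$, $j \mapsto -j$ where convenient recovers the four families of relations in (i): the case $i,j \in \Theta_{hb}$ gives $\sigma'_{ij} = \sigma_{-j,-i}$; the two mixed cases give the relations with a factor $2$ on one side; and $i = j = 0$ gives $2\sigma'_{00} = 2\sigma_{00}$. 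Every step is reversible, so (i) and (ii) together are equivalent to $\sigma \in O_{2n+1}(R)$.

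The only genuine subtlety is that $2$ need not be invertible in $R$. That is why (ii) is not subsumed by $\sigma^t G \sigma = G$ (whose diagonal yields only $2 Q(\sigma e_j) = 2\delta_{0j}$) and why the two mixed families of relations in (i) pair an expression with a factor $2$ on one side against one without it on the other. Beyond careful bookkeeping of the indexing $\Theta$ I expect no real obstacle; the proof is a mechanical entry-by-entry translation, consistent with the author's remark that the lemma is easy to check.
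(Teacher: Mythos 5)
Your proof is correct. Note that the paper gives no in-line argument here at all: it simply cites \cite[Lemma 17]{bak-preusser}, where the statement is proved once in the general setting of the odd-dimensional unitary groups $U_{2n+1}(R,\Delta)$ and then specialized to the orthogonal case. Your direct computation is therefore a genuinely self-contained alternative, and it is sound: the expansion of $Q$ over a linear combination (valid because $R$ is commutative), the reduction of $Q(\sigma u)=Q(u)$ for all $u$ to the conditions on basis vectors and pairs of basis vectors (via evaluation at $e_j$ and $e_j+e_k$), the identification of the off-diagonal conditions with $\sigma^t G\sigma=G$ and hence with $\sigma^tG=G\sigma^{-1}$, and the entry-by-entry match with the four families in (i) are all accurate. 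You also correctly isolate the one real subtlety -- that the diagonal of $\sigma^tG\sigma=G$ only recovers $2Q(\sigma_{*j})=2\delta_{0j}$, so (ii) is an independent condition when $2$ is not invertible -- which is exactly why the lemma is stated with both (i) and (ii). What the paper's route buys is uniformity (one proof covering all the unitary specializations of Example \ref{34}); what yours buys is a short, elementary verification visible on the page.
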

\begin{proof}
See \cite[Lemma 17]{bak-preusser}.
\end{proof}
\subsection{The polarity map}
\begin{definition}
The map
\begin{align*}
\widetilde{}: M&\longrightarrow M^t\\
u&\longmapsto \begin{pmatrix} u_{-1}&\dots&u_{-n}&2u_0&u_{n}&\dots&u_1\end{pmatrix}
\end{align*}
where $M^t={}^{2n+1}\!R$ is called {\it polarity map}. Clearly $~\widetilde{}~$ is linear, i.e. $\widetilde{u+v}=\tilde u+\tilde v$ and $\widetilde{ux}=x\tilde u$ for any $u,v\in M$ and $x\in R$.
\end{definition}
\begin{lemma}\label{8}
If $\sigma\in O_{2n+1}(R)$ and $u\in M$, then $\widetilde{\sigma u}=\tilde u\sigma^{-1}$.
\end{lemma}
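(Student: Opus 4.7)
The plan is to verify the asserted equality entry by entry and reduce it to the four relations of Lemma \ref{6}(i). Set $v := \sigma u$, so that the target identity reads $\tilde v = \tilde u\sigma^{-1}$. By the definition of the polarity map, the $j$-th coordinate of $\tilde v$ is $v_{-j}$ when $j\in\Theta_{hb}$ and $2v_0$ when $j=0$; expanding $v=\sigma u$ gives the left-hand sides
\[
\sum_{k\in\Theta}\sigma_{-j,k}u_k \quad (j\in\Theta_{hb}), \qquad 2\sum_{k\in\Theta}\sigma_{0,k}u_k \quad (j=0).
\]

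For the right-hand side I compute, using the definition of $\tilde u$ and matrix multiplication,
\[
(\tilde u\sigma^{-1})_j = \sum_{i\in\Theta_{hb}} u_{-i}\sigma'_{ij} + 2u_0\sigma'_{0j},
\]
and then substitute $i\mapsto -i$ in the first sum to replace $u_{-i}$ by $u_i$ and $\sigma'_{ij}$ by $\sigma'_{-i,j}$. At this point Lemma \ref{6}(i) converts every entry of $\sigma^{-1}$ into an entry of $\sigma$: for $j\in\Theta_{hb}$ the relevant identities are $\sigma'_{-i,j}=\sigma_{-j,i}$ and $2\sigma'_{0j}=\sigma_{-j,0}$, which reassemble the right-hand side into $\sum_{i\in\Theta_{hb}}\sigma_{-j,i}u_i+\sigma_{-j,0}u_0=\sum_{k\in\Theta}\sigma_{-j,k}u_k$, matching the left-hand side; for $j=0$ the identities $\sigma'_{-i,0}=2\sigma_{0,i}$ and $2\sigma'_{00}=2\sigma_{00}$, together with the commutativity of $R$, reassemble it into $2\sum_{k\in\Theta}\sigma_{0,k}u_k$.

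There is no serious obstacle: the computation is essentially bookkeeping. The only subtlety is that two of the relations in Lemma \ref{6}(i) only pin down $\sigma'_{0j}$ and $\sigma'_{00}$ up to multiplication by $2$; however, these factors of $2$ are precisely the ones that appear in $\tilde u_0 = 2u_0$ and in $\widetilde v_0 = 2v_0$, so the identities combine without any assumption on $2\in R$. (Conceptually, one can also view the statement as the equivalence of $\widetilde{\sigma u}=\tilde u\sigma^{-1}$ with $\sigma^t F\sigma=F$, where $F$ is the matrix of the polarization $B(u,v)=Q(u+v)-Q(u)-Q(v)$ of $Q$; but the direct coordinate verification above is what the relations of Lemma \ref{6}(i) are designed to produce.)
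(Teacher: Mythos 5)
Your proof is correct and is exactly the argument the paper intends: the paper's proof of Lemma \ref{8} consists of the single line ``Follows from Lemma \ref{6}'', and your entrywise verification (with the factors of $2$ in $\tilde u_0$ and $\widetilde{\sigma u}_0$ absorbing the $2$'s in the relations $2\sigma'_{0j}=\sigma_{-j,0}$ and $2\sigma'_{00}=2\sigma_{00}$) is precisely the computation being left to the reader.
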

\begin{proof}
Follows from Lemma \ref{6}.
\end{proof}
\subsection{The elementary subgroup}
If $i,j\in \Theta$, let $e^{ij}$ denote the matrix in $M_{2n+1}(R)$ with $1$ in the $(i,j)$-th position and $0$ in all other positions.
\begin{definition}
If $i,j\in \Theta_{hb}$, $i\neq\pm j$ and $x\in R$, the element  \[T_{ij}(x):=e+xe^{ij}-xe^{-j,-i}\] of $O_{2n+1}(R)$ is called an {\it (elementary) short root matrix}. 
If $i\in \Theta_{hb}$ and $x\in R$, the element \[T_{i}(x):=e+xe^{0,-i}-2xe^{i0}-x^2e^{i,-i}\] of $O_{2n+1}(R)$ is called an {\it (elementary) extra short root matrix}. If an element of $O_{2n+1}(R)$ is a short or extra short root matrix, then it is called {\it elementary matrix}. The subgroup of $O_{2n+1}(R)$ generated by all elementary matrices is called the {\it elementary subgroup} and is denoted by $EO_{2n+1}(R)$. 
\end{definition}
\begin{lemma}\label{10}
The following relations hold for elementary matrices.
\begin{align*}
&T_{ij}(x)=T_{-j,-i}(-x), \tag{S1}\\
&T_{ij}(x)T_{ij}(y)=T_{ij}(x+y), \tag{S2}\\
&[T_{ij}(x),T_{kl}(y)]=e \text{ if } k\neq j,-i \text{ and } l\neq i,-j, \tag{S3}\\
&[T_{ij}(x),T_{jk}(y)]=T_{ik}(xy) \text{ if } i\neq\pm k, \tag{S4}\\
&[T_{ij}(x),T_{j,-i}(y)]=e, \tag{S5}\\
&T_{i}(x)T_{i}(y)=T_{i}(x+y), \tag{E1}\\
&[T_{i}(x),T_{j}(y)]=T_{i,-j}(-2xy) \text{ if } i\neq\pm j, \tag{E2}\\
&[T_{i}(x),T_{i}(y)]=e, \tag{E3}\\
&[T_{ij}(x),T_{k}(y)]=e \text{ if } k\neq j,-i \text{ and} \tag{SE1}\\
&[T_{ij}(x),T_{j}(y)]=T_{j,-i}(-xy^2)T_{i}(xy).\tag{SE2}\\
\end{align*}
\end{lemma}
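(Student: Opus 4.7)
All of the identities in Lemma~\ref{10} can be verified by direct matrix calculation, using the explicit formulas
\[T_{ij}(x) = e + xe^{ij} - xe^{-j,-i}, \qquad T_i(x) = e + xe^{0,-i} - 2xe^{i0} - x^2 e^{i,-i},\]
together with the basic identity $e^{\alpha\beta}e^{\gamma\delta} = \delta_{\beta\gamma} e^{\alpha\delta}$ for matrix units.

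Relation (S1) is immediate from the definition. For (S2) and (E1), I would write each factor as $e + A$ and expand $(e+A)(e+B) = e + A + B + AB$: in (S2) the cross term $AB$ vanishes by the index hypothesis $i\neq \pm j$; in (E1) exactly one surviving cross product, namely $(-2xe^{i0})(ye^{0,-i}) = -2xy\,e^{i,-i}$, combines with the coefficients $-x^2 - y^2$ to give $-(x+y)^2$, so that the product equals $T_i(x+y)$. As a by-product one obtains the inverse formulas $T_{ij}(x)^{-1} = T_{ij}(-x)$ and $T_i(x)^{-1} = T_i(-x)$, which I would then use freely in the rest of the proof.

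For each commutator relation (S3)--(S5), (E2), (E3), (SE1), (SE2) the plan is to expand $[a,b] = a b a^{-1} b^{-1}$ as a product of four matrices of the form $e + (\cdot)$ and to collect the resulting terms. The index hypotheses in each line kill most of the mixed products $e^{\alpha\beta} e^{\gamma\delta}$, and the surviving contributions recombine into the claimed right-hand side. The pure short-root cases (S3)--(S5) reproduce the usual Chevalley relations for the embedded $A$-type root subsystem, while (E2), (E3) and (SE1) involve careful tracking of the quadratic terms in $T_i(x)$; in (E3) these contributions are symmetric in $x, y$ and cancel out.

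The main obstacle is (SE2), where $T_{ij}(x)$ interacts with both the linear and the quadratic parts of $T_j(y)$, producing on the right both an extra short root matrix \emph{and} a short root matrix. My approach is first to compute $c := T_{ij}(x) T_j(y) T_{ij}(-x)$, using that conjugation by $T_{ij}(x)$ amounts to the row/column operations ``add $x$ times row $j$ to row $i$, subtract $x$ times row $-i$ from row $-j$'' together with the mirror operations on columns (no quadratic correction arises because $i\neq\pm j$ forces all products $(xe^{ij} - xe^{-j,-i})^2$ to vanish). I would then verify entry-by-entry that $c = T_{j,-i}(-xy^2)\, T_i(xy)\, T_j(y)$; the appearance of $y^2$ on the right is exactly what is needed to absorb the $-x^2$ coefficient hidden inside $T_i(xy)$, which is the only subtle point of the calculation.
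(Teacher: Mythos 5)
Your proposal is correct and coincides with the paper's proof, which simply reads ``Straightforward computation'': you spell out exactly that direct verification via matrix units $e^{\alpha\beta}e^{\gamma\delta}=\delta_{\beta\gamma}e^{\alpha\delta}$, and your identified subtleties (the cross term in (E1), the decomposition $T_{ij}(x)T_j(y)T_{ij}(-x)=T_{j,-i}(-xy^2)T_i(xy)T_j(y)$ for (SE2)) check out. Nothing further is needed.
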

\begin{proof}
Straightforward computation.
\end{proof}
\begin{definition}
Let $u\in M$ be such that such that $u_{-1}=0$ and $u$ is {\it isotropic}, i.e. $Q(u)=0$. Then we denote the matrix
\begin{align*}
&\left(\begin{array}{cccc|c|cccc}1&-u_{-2}&\dots&-u_{-n}&-2u_0&-u_n&\dots&-u_2&0\\&1&&&&&&&u_2\\&&\ddots&&&&&&\vdots\\&&&1&&&&&u_n\\\hline &&&&1&&&&u_0\\\hline&&&&&1&&&u_{-n}\\&&&&&&\ddots&&\vdots\\&&&&&&&1&u_{-2}\\&&&&&&&&1\end{array}\right)\\
=&e+ue^t_{-1}- e_1\tilde u=T_{1}(u_0)T_{2,-1}(u_2)\dots T_{n,-1}(u_n)T_{-n,-1}(u_{-n})\dots T_{-2,-1}(u_{-2})\in EO_{2n+1}(R)
\end{align*}
by $T_{*,-1}(u)$. Clearly $T_{*,-1}(u)^{-1}=T_{*,-1}(-u)$ (note that $\tilde uu=0$ since $u$ is isotropic) and
\begin{equation}
^{\sigma}T_{*,-1}(u)=e+\sigma u\sigma'_{-1,*}- \sigma_{*1} \tilde u\sigma^{-1}=e+\sigma u\widetilde{\sigma_{*1}}- \sigma_{*1}\widetilde{\sigma u}\label{e1}
\end{equation} 
for any $\sigma\in O_{2n+1}(R)$, the last equality by Lemma \ref{8}.
\end{definition}
\begin{definition}\label{12}
Let $i,j\in\Theta_{hb}$ such that $i\neq\pm j$. Define
\begin{align*}
P_{ij}:=&e-e^{ii}-e^{jj}-e^{-i,-i}-e^{-j,-j}+e^{ij}-e^{ji}+e^{-i,-j}-e^{-j,-i}\\
=&T_{ij}(1)T_{ji}(-1)T_{ij}(1)\in EO_{2n+1}(R).                                                                                                                                                                                                                                                                                                                                                                                                                                                                                             
\end{align*}
It is easy to show that $(P_{ij})^{-1}=P_{ji}$. 
\end{definition}
\begin{lemma}\label{13}
Let $i,j,k\in\Theta_{hb}$ such that $i\neq \pm j$ and $k\neq \pm i,\pm j$. Let $x\in R$. Then 
\begin{enumerate}[(i)]
\item $^{P_{ki}}T_{ij}(x)=T_{kj}(x)$,
\item $^{P_{kj}}T_{ij}(x)=T_{ik}(x)$ and
\item $^{P_{-k,-i}}T_{i}(x)=T_{k}(x)$.
\end{enumerate}
\end{lemma}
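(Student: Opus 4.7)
The plan is to first read off from Definition~\ref{12} the explicit action of $P_{ab}$ on the standard basis of $M$, and then use this action to compute the conjugates of the matrix units $e^{cd}$ that appear in the definitions of $T_{ij}(x)$ and $T_i(x)$. Each of (i)--(iii) then reduces to a short calculation with a signed transposition of indices.

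Concretely, I would first verify directly from the displayed matrix in Definition~\ref{12} that $P_{ab}$ acts on the standard basis as the signed permutation $e_b \mapsto e_a$, $e_a \mapsto -e_b$, $e_{-b} \mapsto e_{-a}$, $e_{-a} \mapsto -e_{-b}$, fixing every other $e_c$ (in particular $e_0$). The hypotheses $i \neq \pm j$ and $k \neq \pm i, \pm j$ ensure that in each of (i)--(iii) all the remaining indices appearing in $T_{ij}(x)$ or $T_i(x)$ are among those fixed by the conjugating element. Then for (i), evaluating $(P_{ki}\, e^{ij}\, P_{ki}^{-1})\, e_c = P_{ki}\, e^{ij}\, (P_{ki}^{-1} e_c)$ at each basis vector gives $P_{ki}\, e^{ij}\, P_{ki}^{-1} = e^{kj}$, and an entirely analogous calculation gives $P_{ki}\, e^{-j,-i}\, P_{ki}^{-1} = e^{-j,-k}$; substitution into $T_{ij}(x) = e + xe^{ij} - xe^{-j,-i}$ yields $T_{kj}(x)$. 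Parts (ii) and (iii) are carried out in the same way, with (iii) using the additional point that $P_{-k,-i}$ fixes $e_0$ so that the middle summand $-2xe^{i0}$ of $T_i(x)$ transforms cleanly to $-2xe^{k0}$, while the outer summands transform according to the same matrix-unit computation used in (i).

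I do not expect a serious obstacle; the only item to watch is sign bookkeeping, since $P_{ab}$ introduces a minus sign on $e_a$ and $e_{-a}$. In each of (i)--(iii) the two matrix-unit summands making up $T_{ij}(x)$ (respectively the three summands of $T_i(x)$) are paired in exactly the way that causes the signs from left- and right-conjugation to cancel. As an alternative route, one could instead expand $P_{ab} = T_{ab}(1)T_{ba}(-1)T_{ab}(1)$ and conjugate step by step by each factor, using (S3)--(S5) of Lemma~\ref{10} for (i) and (ii), and additionally (SE1)--(SE2) for (iii); the hypotheses on $i,j,k$ are precisely those needed to trigger the relevant commutator formulas, and the resulting products telescope cleanly.
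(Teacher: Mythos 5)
Your proposal is correct: the signed-permutation action of $P_{ab}$ on the standard basis is exactly as you describe, and conjugating the matrix units $e^{ij}$, $e^{-j,-i}$ (resp.\ $e^{0,-i}$, $e^{i0}$, $e^{i,-i}$) under the stated hypotheses yields $T_{kj}(x)$, $T_{ik}(x)$ and $T_k(x)$ with all signs cancelling as you predict. The paper omits the proof as ``straightforward,'' and your computation is precisely the intended verification, so there is nothing further to compare.
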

\begin{proof}
Straightforward.
\end{proof}
\subsection{Relative elementary subgroups}
\begin{definition}
An admissible pair is a pair $(I,J)$ where $I$ and $J$ are ideals of $R$ such that 
\[2J,J^2\subseteq I\subseteq J\]
where $J^2=\{x^2\in R\mid x\in J\}$.
\end{definition}
\begin{definition}
Let $(I,J)$ denote an admissible pair. A short root matrix $T_{ij}(x)$ is called {\it $(I,J)$-elementary} if $x\in I$. An extra short root matrix $T_{i}(x)$ is called {\it $(I,J)$-elementary} if $x\in J$. If an element of $O_{2n+1}(R)$ is an $(I,J)$-elementary short or extra short root matrix, then it is called {\it $(I,J)$-elementary matrix}. The subgroup $EO_{2n+1}(I,J)$ of $EO_{2n+1}(R)$ generated by the $(I,J)$-elementary matrices is called the {\it preelementary subgroup of level $(I,J)$}. Its normal closure $EO_{2n+1}(R,I,J)$ in $EO_{2n+1}(R)$ is called the {\it elementary subgroup of level $(I,J)$}.
\end{definition}
\subsection{Congruence subgroups}
In this subsection $(I,J)$ denotes an admissible pair. 
\begin{definition}
The subgroup of $O_{2n+1}(R)$ consisting of all $\sigma\in O_{2n+1}(R)$ such that 
\begin{enumerate}[(i)]
\item $\sigma_{ij}\equiv \delta_{ij} \bmod I$ for any $i\in \Theta_{hb}, j\in\Theta$ and
\item $\sigma_{0j}\equiv \delta_{0j} \bmod J$ for any $j\in \Theta$
\end{enumerate}
is called {\it principal congruence subgroup of level} $(I,J)$ and is denoted by $O_{2n+1}(R,I,J)$. 
\end{definition}
\begin{theorem}
$O_{2n+1}(R,I,J)$ is a normal subgroup of $O_{2n+1}(R)$.
\end{theorem}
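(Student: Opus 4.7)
The idea is to reformulate membership in $O_{2n+1}(R,I,J)$ via an additive subgroup of the matrix algebra. Define
\[\mathcal{I}:=\{a\in M_{2n+1}(R):a_{ij}\in I\text{ for all }i\in\Theta_{hb},\,j\in\Theta\text{ and }a_{0j}\in J\text{ for all }j\in\Theta\}.\]
Unwinding the definitions, a matrix $\sigma\in O_{2n+1}(R)$ lies in $O_{2n+1}(R,I,J)$ iff $\sigma-e\in\mathcal{I}$. Hence, once we know that $\mathcal{I}$ is invariant under left and right multiplication by every $\sigma\in O_{2n+1}(R)$, all remaining assertions follow from the algebraic identities $\sigma\tau-e=\sigma(\tau-e)+(\sigma-e)$, $\sigma^{-1}-e=-\sigma^{-1}(\sigma-e)$, and $\tau\sigma\tau^{-1}-e=\tau(\sigma-e)\tau^{-1}$: the first gives closure under products, the second gives closure under inverses, and the third gives normality.

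The main step is therefore to establish $\sigma^{\pm 1}\mathcal{I}\subseteq\mathcal{I}$ and $\mathcal{I}\sigma^{\pm 1}\subseteq\mathcal{I}$ for every $\sigma\in O_{2n+1}(R)$. Right multiplication is essentially free of content: row $i$ of $a\tau$ is a linear combination of the entries of row $i$ of $a$, so it stays in the same ideal ($I$ for $i\in\Theta_{hb}$, $J$ for $i=0$). For left multiplication, expanding $(\sigma a)_{ij}=\sum_k\sigma_{ik}a_{kj}$ and splitting off the $k=0$ summand, the case $i=0$ is immediate since $I\subseteq J$. The delicate case is $i\in\Theta_{hb}$, where one must verify $\sigma_{i0}a_{0j}\in I$ even though $a_{0j}$ is only assumed to lie in $J$. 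This is precisely where Lemma \ref{6} enters: its identity $\sigma_{i0}=2\sigma'_{0,-i}$ forces $\sigma_{i0}\in 2R$ for every $i\in\Theta_{hb}$, whence $\sigma_{i0}a_{0j}\in 2J\subseteq I$ by the admissibility condition. The same reasoning, now using $\sigma'_{i0}=2\sigma_{0,-i}\in 2R$, handles left multiplication by $\sigma^{-1}$.

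The main obstacle is recognising this compatibility rather than overcoming it: the divisibility-by-$2$ phenomenon for the entries in column $0$, special to the odd-dimensional orthogonal setting and recorded in Lemma \ref{6}, is exactly what the admissibility hypothesis $2J\subseteq I$ is designed to absorb. Once this observation is made, the subgroup and normality claims are obtained by the three one-line matrix identities listed above, with no further computation required.
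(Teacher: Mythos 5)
Your argument is correct, but it is a genuinely different route from the paper's: the paper disposes of this theorem with a one-line citation to \cite[Corollary 35]{bak-preusser}, where normality is established in the more general odd unitary setting, whereas you give a short self-contained proof inside the orthogonal framework of this paper. Your reduction to the two-sided invariance of the additive group $\mathcal{I}$ under multiplication by $O_{2n+1}(R)$, followed by the identities $\sigma\tau-e=\sigma(\tau-e)+(\sigma-e)$, $\sigma^{-1}-e=-\sigma^{-1}(\sigma-e)$ and $\tau\sigma\tau^{-1}-e=\tau(\sigma-e)\tau^{-1}$, is sound, and you correctly isolate the only nontrivial point: in $(\sigma a)_{ij}$ with $i\in\Theta_{hb}$ the summand $\sigma_{i0}a_{0j}$ has $a_{0j}$ only in $J$, and the identities $\sigma_{i0}=2\sigma'_{0,-i}$ and $\sigma'_{i0}=2\sigma_{0,-i}$ from Lemma \ref{6} supply the factor of $2$ that the admissibility condition $2J\subseteq I$ absorbs (the case $i=0$ needing only $I\subseteq J$, and the condition $J^2\subseteq I$ not being needed at all here). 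What your approach buys is independence from the external reference and transparency about exactly which parts of admissibility are used; what the paper's citation buys is uniformity, since the unitary analogue of the same fact is needed later anyway and \cite{bak-preusser} covers both at once.
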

\begin{proof}
Follows from \cite[Corollary 35]{bak-preusser}.
\end{proof}
\begin{definition}
The subgroup
\[\{\sigma\in O_{2n+1}(R)\mid[\sigma,EO_{2n+1}(R)]\subseteq O_{2n+1}(R,I,J)\}\]
of $O_{2n+1}(R)$ is called {\it full congruence subgroup of level $(I,J)$} and is denoted by $CO_{2n+1}(R,I,J)$. 
\end{definition}
\begin{lemma}\label{19}
Let $\sigma\in O_{2n+1}(R)$. Then $\sigma\in CO_{2n+1}(R,I,J)$ iff
\begin{enumerate}[(i)]
\item $\sigma_{ij}\in I$ for any $i\in \Theta_{hb}, j\in\Theta$ such that $i\neq j$,
\item $\sigma_{0j}\in J$ for any $j\in \Theta_{hb}$,
\item $\sigma_{ii}-\sigma_{jj}\in I$ for any $i,j\in \Theta_{hb}$ and
\item $\sigma_{00}-\sigma_{jj}\in J$ for any $j \in\Theta_{hb}$.
\end{enumerate}
\end{lemma}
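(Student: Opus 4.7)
The plan is to prove both directions by direct entrywise computation of the relevant commutators. By Lemma \ref{8}, $\sigma T_{kl}(x)\sigma^{-1}=e+x\sigma_{*k}\widetilde{\sigma_{*,-l}}-x\sigma_{*,-l}\widetilde{\sigma_{*k}}$; its $(i,j)$-entry, after invoking Lemma \ref{6} to convert entries of $\sigma^{-1}$ back into entries of $\sigma$, becomes an explicit polynomial in the entries of $\sigma$, with a factor of $2$ appearing whenever an index equal to $0$ is touched. The commutator with an extra short root matrix $T_k(x)=e+xe^{0,-k}-2xe^{k0}-x^2e^{k,-k}$ is expanded the same way.

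For the ``if'' direction, assume (i)--(iv). Since $EO_{2n+1}(R)$ is generated by the matrices $T_{kl}(x)$ and $T_k(x)$, it suffices to check $[\sigma,T_{kl}(x)],[\sigma,T_k(x)]\in O_{2n+1}(R,I,J)$. Writing $\mu:=\sigma T_{kl}(x)\sigma^{-1}$, the commutator $\mu\cdot T_{kl}(-x)$ differs from $\mu$ only in columns $l$ and $-k$. A short case analysis on whether the row index $i$ lies in $\Theta_{hb}$ or equals $0$, and on whether any of the indices $k,l,-k,-l,j$ coincide with $0$, shows that each off-diagonal entry is a sum of products each of which contains either an off-diagonal entry of $\sigma$ (hence in $I$, or in $J$ if it sits in row $0$), or a product of two entries of $J$ (absorbed by $J^2\subseteq I$), or a single factor of $2$ against an entry of $J$ (absorbed by $2J\subseteq I$). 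Diagonal entries satisfy the required congruence through (iii) and (iv). The extra short root case is identical, with the quadratic term $-x^2e^{k,-k}$ contributing products absorbed by $J^2\subseteq I$ and the $-2x$ coefficient producing contributions absorbed by $2J\subseteq I$.

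For the ``only if'' direction, assume $[\sigma,\epsilon]\in O_{2n+1}(R,I,J)$ for every elementary $\epsilon$. Conditions (i)--(iv) are extracted by specializing the commutator formula to concrete generators. Because $n\geq 3$, we may choose auxiliary indices $k,l\in\Theta_{hb}$ disjoint from $\{\pm i,\pm j\}$; inspecting a suitable entry of $[\sigma,T_{kl}(1)]$---which by assumption lies in $I$ (or in $J$, when its row is $0$)---isolates the targeted entry of $\sigma$ modulo expressions already shown to lie in the correct ideal, after using Lemma \ref{13} to reduce to a canonical index configuration. Conditions (ii) and the row-$0$ part of (iv) are extracted analogously from commutators with the extra short root matrices $T_k(1)$, whose intrinsic factor of $2$ naturally yields only a $J$-condition on row $0$ rather than an $I$-condition. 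The main technical obstacle is the bookkeeping around the factor of $2$ from Lemma \ref{6}: aligning $I$- and $J$-memberships with the coefficients $2$ appearing in the odd-dimensional polarity formulas, and choosing auxiliary indices so that exactly one entry of $\sigma$ is isolated from the many cross terms of the commutator, are the only delicate points in an otherwise routine calculation.
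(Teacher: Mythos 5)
Your overall plan -- prove both implications by direct entrywise computation of $[\sigma,\epsilon]$ for the two types of generators -- is certainly the intended ``straightforward'' argument (the paper gives no details), but two of your concrete claims do not hold as stated. In the ``if'' direction, your assertion that every off-diagonal entry of $[\sigma,T_{kl}(x)]$ is a sum of products each containing an off-diagonal entry of $\sigma$, a product of two elements of $J$, or a factor $2$ against an element of $J$, fails exactly at the positions of the root itself: the $(k,l)$-entry of $\mu T_{kl}(-x)$ is $\mu_{kl}-x\mu_{kk}$, which contains the term $x(\sigma_{kk}\sigma'_{ll}-1)=x(\sigma_{kk}\sigma_{-l,-l}-1)$, and similarly the $(k,-k)$-entry of $[\sigma,T_k(x)]$ contains $-x^2(\sigma_{kk}^2-2\sigma_{kk}\sigma_{00}+1)$. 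Neither expression lies in $I$ for an arbitrary matrix satisfying (i)--(iv) (take $\sigma=de$ with $d^2-1\notin I$), so (iii) and (iv) alone do not suffice. You must additionally invoke $\sigma\sigma^{-1}=e$ together with Lemma \ref{6}: from $\sum_j\sigma_{kj}\sigma'_{jk}=1$ and (i), (ii), $2J\subseteq I$ one gets $\sigma_{kk}\sigma_{-k,-k}\equiv 1\bmod I$, and then (iii), (iv) and $2J\subseteq I$ dispose of both offending terms. This extra use of orthogonality is a genuinely missing ingredient, not bookkeeping.

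In the ``only if'' direction, ``inspecting a suitable entry of $[\sigma,T_{kl}(1)]$'' cannot by itself isolate a single entry of $\sigma$: every nontrivial entry of $\sigma T_{kl}(1)\sigma^{-1}T_{kl}(-1)-e$ is a sum of products of two entries of $\sigma^{\pm1}$, so you would be arguing circularly (you would need $\sigma'_{ll}$ invertible modulo $I$ to extract $\sigma_{ik}$ from $\sigma_{ik}\sigma'_{ll}+\cdots$). The standard repair is to linearize first: since $I$ and $J$ are ideals, $[\sigma,\epsilon]\in O_{2n+1}(R,I,J)$ is equivalent to the statement that the rows of $\sigma\epsilon-\epsilon\sigma$ indexed by $\Theta_{hb}$ have entries in $I$ and row $0$ has entries in $J$ (multiply $[\sigma,\epsilon]-e$ on the right by $\epsilon\sigma$). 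The entries of $\sigma T_{kl}(x)-T_{kl}(x)\sigma$ and of $\sigma T_k(x)-T_k(x)\sigma$ are \emph{linear} in the entries of $\sigma$, and evaluating them at $x=1$ and suitable $(i,j)$ yields (i)--(iii) from the short roots and (iv), after subtracting the already established (ii), from the extra short roots. With these two corrections your argument goes through; as written, both directions have a step that would fail.
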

\begin{proof}
Straightforward.
\end{proof}
\begin{theorem}\label{23}
If $n\geq 3$, then the equalities
\begin{align*}[CO_{2n+1}(R,I,J),EO_{2n+1}(R)]=[EO_{2n+1}(R,I,J),EO_{2n+1}(R)]=EO_{2n+1}(R,I,J)\end{align*}
hold.
\end{theorem}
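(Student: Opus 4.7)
The plan is to reduce the two claimed equalities to two key inclusions:
\begin{enumerate}[(A)]
\item $EO_{2n+1}(R,I,J) \subseteq [EO_{2n+1}(R,I,J), EO_{2n+1}(R)]$,
\item $[CO_{2n+1}(R,I,J), EO_{2n+1}(R)] \subseteq EO_{2n+1}(R,I,J)$.
\end{enumerate}
Since $EO_{2n+1}(R,I,J)$ is normal in $EO_{2n+1}(R)$ by construction, the reverse inclusion $[EO_{2n+1}(R,I,J), EO_{2n+1}(R)] \subseteq EO_{2n+1}(R,I,J)$ is automatic, so (A) yields the first equality. For the second, I would first observe that $EO_{2n+1}(R,I,J) \subseteq CO_{2n+1}(R,I,J)$: each $(I,J)$-elementary matrix lies in $O_{2n+1}(R,I,J)$ directly from the definitions, the subgroup $O_{2n+1}(R,I,J)$ is normal in $O_{2n+1}(R)$ and is contained in $CO_{2n+1}(R,I,J)$, so the normal closure in $EO_{2n+1}(R)$ is contained in $CO_{2n+1}(R,I,J)$ as well. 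This gives the chain $[EO_{2n+1}(R,I,J), EO_{2n+1}(R)] \subseteq [CO_{2n+1}(R,I,J), EO_{2n+1}(R)]$; combining with (B) and the first equality forces the second.

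To prove (A), since the commutator subgroup $[EO_{2n+1}(R,I,J), EO_{2n+1}(R)]$ is normal in $EO_{2n+1}(R)$, it suffices to check the inclusion on $(I,J)$-elementary generators. For a short root matrix $T_{ij}(x)$ with $x \in I$, the hypothesis $n \geq 3$ lets me pick $k \in \Theta_{hb}$ with $k \neq \pm i, \pm j$, and relation (S4) gives $T_{ij}(x) = [T_{ik}(x), T_{kj}(1)]$, a commutator of an element of $EO_{2n+1}(R,I,J)$ with an element of $EO_{2n+1}(R)$. For an extra short root $T_i(y)$ with $y \in J$, I pick $j \neq \pm i$ and apply (SE2) with $x=1$ to get $T_i(y) = T_{j,-i}(y^2) \cdot [T_{ij}(1), T_j(y)]$; the admissibility condition $J^2 \subseteq I$ makes $T_{j,-i}(y^2)$ an $(I,J)$-elementary short root already covered by the previous case, while the bracket is a commutator involving $T_j(y) \in EO_{2n+1}(R,I,J)$.

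The main obstacle is (B). By Lemma \ref{pre} applied inductively to expand the second argument of the commutator, it suffices to show that $[\sigma, T_{ij}(x)]$ and $[\sigma, T_i(y)]$ lie in $EO_{2n+1}(R,I,J)$ for every $\sigma \in CO_{2n+1}(R,I,J)$ and every $x, y \in R$. Lemma \ref{19} tells me how the entries of $\sigma$ are distributed between $I$ and $J$, and Lemma \ref{8} converts entries of $\sigma^{-1}$ into entries of $\sigma$ via the polarity map. I would then expand $\sigma T_{ij}(x) \sigma^{-1}$ as a product of elementary matrices parametrised by the indices of $\sigma$, verify via the congruences of Lemma \ref{19} that each factor is $(I,J)$-elementary or can be conjugated into one via the permutations in Lemma \ref{13}, and absorb $T_{ij}(x)^{-1}$ using the relations of Lemma \ref{10}. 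The most delicate case is $\xi = T_i(y)$, where both ideals $I$ and $J$ enter simultaneously through different entries of $\sigma$ and one has to use all of the admissibility conditions $2J, J^2 \subseteq I \subseteq J$ to keep the computation inside $EO_{2n+1}(R,I,J)$; this bookkeeping is precisely the content for which one would ultimately want to appeal to the dedicated commutator computations of the bak--preusser reference.
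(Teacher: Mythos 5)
The paper does not prove this theorem internally: its entire proof is the citation to \cite{bak-preusser}, Theorem 39, so there is no in-paper argument to measure you against. Your formal skeleton is sound. The reduction to the two inclusions (A) and (B), the chain $EO_{2n+1}(I,J)\subseteq O_{2n+1}(R,I,J)\subseteq CO_{2n+1}(R,I,J)$ (where the admissibility conditions $2J,J^2\subseteq I$ are exactly what puts the $(I,J)$-elementary generators inside the principal congruence subgroup), and the proof of (A) via (S4) and (SE2) with $T_{j,-i}(y^2)$ absorbed using $J^2\subseteq I$ are all correct; together with normality of $EO_{2n+1}(R,I,J)$ in $EO_{2n+1}(R)$ this does establish $[EO_{2n+1}(R,I,J),EO_{2n+1}(R)]=EO_{2n+1}(R,I,J)$. (You label which equality follows from which step inconsistently, but the logic is recoverable.)

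The genuine gap is (B), and it is not a bookkeeping gap: it is the entire mathematical content of the theorem. For a general $\sigma\in CO_{2n+1}(R,I,J)$ the conjugate $^{\sigma}T_{ij}(x)=e+x\sigma_{*i}\sigma'_{j*}-x\sigma_{*,-j}\sigma'_{-i,*}$ is a ``fake transvection'' that does not expand as a product of elementary matrices by any direct manipulation. Lemma \ref{19} only constrains the off-diagonal entries of $\sigma$ to lie in $I$ or $J$ and the diagonal entries to be congruent to one another --- not to $1$ --- so no sequence of applications of Lemmas \ref{10} and \ref{13} will decompose $^{\sigma}T_{ij}(x)\,T_{ij}(-x)$ into $(I,J)$-elementary conjugates without a substantive new input (localization, decomposition of unipotents, or the explicit conjugation calculus developed in \cite{bak-preusser}); note also that Theorem \ref{mthm1} of this paper runs in the opposite direction (it expresses elementary matrices as products of $\sigma$-conjugates, not $\sigma$-conjugates as products of elementary matrices) and cannot be invoked here. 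You concede the point yourself by deferring ``the bookkeeping'' to the reference, but that bookkeeping is the theorem: as written, your proposal proves the easy equality and reduces the hard one to an inclusion it then leaves to the literature, which should be stated as a citation rather than presented as an almost-complete computation.
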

\begin{proof}
See \cite[Theorem 39]{bak-preusser}.
\end{proof}
\section{Sandwich classification for $O_{2n+1}(R)$}\label{sec6}
In this section $n$ denotes a natural number greater than or equal to $3$ and $R$ a commutative ring.
\begin{definition}
Let $\sigma\in O_{2n+1}(R)$. Then a matrix of the form $^{\epsilon}\sigma^{\pm 1}$ where $\epsilon\in EO_{2n+1}(R)$ is called an {\it elementary (orthogonal) $\sigma$-conjugate}.
\end{definition}
\begin{theorem}\label{mthm1}
Let $\sigma\in O_{2n+1}(R)$ and $i,j,k,l\in\Theta_{hb}$ such that $i\neq \pm j$ and $k\neq \pm l$. Then 
\begin{enumerate}[(i)]
\item $T_{kl}(\sigma_{ij})$ is a product of $8$ elementary orthogonal $\sigma$-conjugates,
\item $T_{kl}(\sigma_{i,-i})$ is a product of $16$ elementary orthogonal $\sigma$-conjugates,
\item $T_{kl}(\sigma_{i0})$ is a product of $24$ elementary orthogonal $\sigma$-conjugates,
\item $T_{kl}(2\sigma_{0j})$ is a product of $24$ elementary orthogonal $\sigma$-conjugates,
\item $T_{kl}(\sigma_{ii}-\sigma_{jj})$ is a product of $24$ elementary orthogonal $\sigma$-conjugates,
\item $T_{kl}(\sigma_{ii}-\sigma_{-i,-i})$ is a product of $48$ elementary orthogonal $\sigma$-conjugates,
\item $T_{k}(\sigma_{0j})$ is a product of $64n+148$ elementary orthogonal $\sigma$-conjugates and
\item $T_{k}(\sigma_{00}-\sigma_{jj})$ is a product of $192n+564$ elementary orthogonal $\sigma$-conjugates.
\end{enumerate}
\end{theorem}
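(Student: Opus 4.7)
The plan is to adapt the triple-nested commutator strategy of \cite{preusser_sct} to the odd orthogonal setting. The numerical estimates in the statement already reveal the structural skeleton: $8=2^{3}$ in (i) signals a triple commutator $[\epsilon_{1},[\epsilon_{2},[\epsilon_{3},\sigma]]]$, which consumes exactly $8$ elementary $\sigma$-conjugates after three applications of Lemma~\ref{pre2}(ii); the small multiples of $8$ in (ii)--(vi) correspond to bootstrapped constructions that take (i) as a building block, concatenating two or three applications via the commutator relations of Lemma~\ref{10}; the linear-in-$n$ bounds of (vii) and (viii) reflect the conversion of short root products into extra short root matrices via the mixed relation (SE2), which requires summing over the admissible intermediate indices.

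By Lemma~\ref{13} together with Lemma~\ref{pre2}(i), it suffices in each case to prove the claim for one fixed admissible choice of the indices $i,j,k,l$, since conjugation by the Weyl-type elements $P_{**}\in EO_{2n+1}(R)$ produces the general case without changing the factor count. For part (i) I would fix, say, $i=2$, $j=3$ and start from the closed form
\begin{equation*}
{}^{\sigma}T_{i,-1}(1)=e+\sigma_{*i}\sigma'_{-1,*}-\sigma_{*1}\sigma'_{-i,*},
\end{equation*}
obtained by specializing Equation~\eqref{e1} to $u=e_{i}$ (which is isotropic and satisfies $u_{-1}=0$) and using Lemma~\ref{8}. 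The idea is to bracket this expression successively with elementary short root matrices $T_{-1,p}(1)$ and $T_{q,-1}(1)$ for carefully chosen $p,q$, and to unwind the nested commutators via Lemma~\ref{pre} and (S3)--(S5) of Lemma~\ref{10} so that the two rank-one summands on the right-hand side are stripped off one at a time. The rank-two structure forces three successive brackets; the result is a triple commutator equal to a single short root matrix $T_{kl}(\sigma_{ij})$, giving the bound $8$.

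For (ii)--(vi) I would bootstrap from (i). The long-root entry $\sigma_{i,-i}$ of (ii) cannot be produced directly because (i) forbids $j=-i$; it will be obtained by combining (i) once more with (S4) and the orthogonality identities of Lemma~\ref{6}, giving $2\cdot 8=16$ factors. Cases (iii) and (iv), involving the $0$-row and $0$-column, use the polarity map, with the factor $2$ in $T_{kl}(2\sigma_{0j})$ reflecting the factor $2$ appearing in the polarity map on $e_{0}$; one additional call to (i) together with a correction commutator bring the count to $3\cdot 8=24$. Case (v) on the diagonal difference $\sigma_{ii}-\sigma_{jj}$ is extracted from an off-diagonal entry of ${}^{\sigma}T_{ij}(1)$ by three calls to (i), again giving $24$, and (vi) is obtained by iterating (v) through an auxiliary index, doubling to $48$.

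Parts (vii) and (viii) are the technical heart. The plan is to apply the mixed Chevalley relation
\begin{equation*}
[T_{ij}(x),T_{j}(y)]=T_{j,-i}(-xy^{2})T_{i}(xy)
\end{equation*}
of Lemma~\ref{10}, which isolates the extra short root $T_{i}(xy)$ provided the short root correction $T_{j,-i}(xy^{2})$ can be cancelled by an already-proven part of the theorem. Choosing $x=\sigma_{0m}$ (respectively $x=\sigma_{00}-\sigma_{mm}$) and $y=1$, and summing over the $2(n-2)$ admissible intermediate indices $j$, produces the leading coefficients $64n$ in (vii) and $192n$ in (viii); the fixed constants $148$ and $564$ absorb the overhead of producing the required short root corrections via (iv) and (v) and of the final cancellation. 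The main obstacle of the proof is precisely this bookkeeping in (vii)--(viii): at every step one must verify that the indices appearing in the Chevalley commutators avoid the forbidden configurations of Lemma~\ref{10}, and independently cancel every superfluous short root factor by an earlier part of the theorem, while keeping the total factor count sharp enough to match the stated bounds.
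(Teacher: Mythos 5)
Your reading of the combinatorial skeleton (nested commutators for (i), bootstrapping for (ii)--(vi), an $O(n)$ summation for (vii)--(viii)) is broadly right, but the proposal omits the two ideas that actually make the stated bounds attainable, and the key steps fail as described. For part (i): if you start from $g:={}^{\sigma}T_{i,-1}(1)=e+\sigma_{*i}\sigma'_{-1,*}-\sigma_{*1}\sigma'_{-i,*}$ and bracket only with \emph{constant} elementary matrices $T_{-1,p}(1)$, $T_{q,-1}(1)$, then every non-constant entry of every matrix you can produce is a polynomial of degree at least $2$ in the entries of $\sigma$ and $\sigma^{-1}$ (the off-diagonal entries of $g$ are already products $\sigma_{ki}\sigma'_{-1,l}$, and commutators with constant root elements preserve this property). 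Since $\sigma_{ij}$ is not in the square of the augmentation ideal of the coordinate ring of $O_{2n+1}$, no such procedure can terminate in $T_{kl}(\sigma_{ij})$; recovering a single entry from quadratic expressions forces the identity $\sum_s\sigma_{is}\sigma'_{sj}=\delta_{ij}$ and a count linear in $n$. The paper's proof of (i) injects the entry $\sigma_{23}$ \emph{linearly} by conjugating with the Borevich--Vavilov stabilization element $\tau:=T_{21}(-\sigma_{23})T_{31}(\sigma_{22})T_{2,-3}(\sigma_{2,-1})$, whose parameters are themselves entries of $\sigma$ and which is chosen so that $\xi:={}^{\sigma}\tau^{-1}$ has $\xi_{2*}=e_2^t$ and $\xi_{*,-2}=e_{-2}$; then in $\zeta=[\tau^{-1},T_{32}(1)]\,[T_{32}(1),\xi]$ the first factor is exactly $T_{31}(-\sigma_{23})$ while the second lies in a unipotent subgroup annihilated by the final bracket with $T_{12}(1)$. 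This $\tau$ is the whole content of (i) and is absent from your plan.

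For part (vii) the proposed mechanism is structurally impossible: applying (SE2) with $x=\sigma_{0m}$, $y=1$ requires the short root matrix $T_{ij}(\sigma_{0m})$ as input, but parts (i)--(vi) only yield $T_{kl}(2\sigma_{0j})$, and the factor $2$ cannot be removed at the short-root level (this is precisely why admissible pairs satisfy $2J\subseteq I\subseteq J$ rather than $I=J$; producing $T_{ij}(\sigma_{0j})$ would collapse the two levels). The extra short root matrix $T_k(\sigma_{0j})$ must instead be manufactured directly as an extra short root element: the paper does this with the ESD transvection $T_{*,-1}(u)$ for $u=\sigma^{-1}u'$, $u'=(0,\dots,0,\sigma_{11},-\sigma_{21})^t$, whose $T_1(u_0)$-component carries the extra-short-root data, obtaining $T_3(x\sigma_{01}\sigma_{11})$ in $(2n+9)\cdot 8+4$ factors, and then assembles $T_k(\sigma_{0j})$ from $\sigma_{0j}=\sum_{s\in\Theta}\sigma'_{js}\sigma_{sj}\sigma_{0j}$, handling the $s\neq j$ terms by (i), (ii), (iv) and (SE2). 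Your attribution of the coefficient $64n$ to a sum over intermediate indices in (SE2) does not correspond to this. Parts (ii)--(vi) of your sketch are acceptable in spirit (shift the target entry into a position covered by (i) and correct), though the paper realizes them by applying (i) to ${}^{T}\sigma$ for a single elementary $T$ rather than via (S4) or Lemma \ref{6}.
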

\begin{proof}
(i) Set $\tau:=T_{21}(-\sigma_{23})T_{31}(\sigma_{22})T_{2,-3}(\sigma_{2,-1})$ and $\xi:={}^{\sigma}\tau^{-1}$. One checks easily that $(\sigma\tau^{-1})_{2*}=\sigma_{2*}$ and $(\tau^{-1}\sigma^{-1})_{*,-2}=\sigma'_{*,-2}$. Hence $\xi_{2*}=e^t_2$ and $\xi_{*,-2}=e_{-2}$. Set  
\begin{align*}
\zeta:={}^{\tau^{-1}}[T_{32}(1),[\tau,\sigma]]={}^{\tau^{-1}}[T_{32}(1),\tau\xi]=[\tau^{-1},T_{32}(1)][T_{32}(1),\xi],
\end{align*}
the last equality by Lemma \ref{pre}. One checks easily that $[\tau^{-1},T_{32}(1)]=T_{31}(-\sigma_{23})$ and $[T_{32}(1),\xi]=T_{-2}(x_{-2})\cdot$ $\cdot\prod\limits_{i\neq 0,\pm 2 }T_{i2}(x_i)$ for some $x_i\in R~(i\neq 0,2)$. Hence $\zeta=T_{31}(-\sigma_{23})T_{-2}(x_{-2})\prod\limits_{i\neq 0,\pm 2 }T_{i2}(x_i)$. It follows that $[T_{12}(1),\zeta]=T_{32}(\sigma_{23})$. Hence we have shown
\[[T_{12}(1),{}^{\tau^{-1}}[T_{32}(1),[\tau,\sigma]]]=T_{32}(\sigma_{23}).\]
This implies that $T_{32}(\sigma_{23})$ is a product of $8$ elementary $\sigma$-conjugates. It follows from Lemma \ref{13} that $T_{kl}(\sigma_{23})$ is a product of $8$ elementary $\sigma$-conjugates. Since one can bring $\sigma_{ij}$ to position $(2,3)$ conjugating by monomial matrices from $EO_{2n+1}(R)$ (see Definition \ref{12}), the assertion of (i) follows.\\
\\
(ii) Clearly the entry of $^{T_{ji}(1)}\sigma$ at position $(j,-i)$ equals $\sigma_{i,-i}+\sigma_{j,-i}$. Applying (i) to $^{T_{ji}(1)}\sigma$ we get that $T_{kl}(\sigma_{i,-i}+\sigma_{j,-i})$ is a product of $8$ elementary $\sigma$-conjugates (note that any elementary $^{T_{ji}(1)}\sigma$-conjugate is also an elementary $\sigma$-conjugate). Applying (i) to $\sigma$ we get that $T_{kl}(\sigma_{j,-i})$ is a product of $8$ elementary $\sigma$-conjugates. It follows that $T_{kl}(\sigma_{i,-i})=T_{kl}(\sigma_{i,-i}+\sigma_{j,-i})T_{ji}(-\sigma_{j,-i})$ is a product of $16$ elementary $\sigma$-conjugates.\\
\\
(iii) Clearly the entry of $^{T_{-j}(-1)}\sigma$ at position $(i,j)$ equals $\sigma_{i0}+\sigma_{ij}-\sigma_{i,-j}$. Applying (i) to $^{T_{-j}(-1)}\sigma$ we get that $T_{kl}(\sigma_{i0}+\sigma_{ij}-\sigma_{i,-j})$ is a product of $8$ elementary $\sigma$-conjugates. Applying (i) to $\sigma$ we get that $T_{kl}(-\sigma_{ij})$ and $T_{kl}(\sigma_{i,-j})$ each are a product of $8$ elementary $\sigma$-conjugates. It follows that $T_{kl}(\sigma_{i0})=T_{kl}(\sigma_{i0}+\sigma_{ij}-\sigma_{i,-j})T_{kl}(-\sigma_{ij})T_{kl}(\sigma_{i,-j})$ is a product of $24$ elementary $\sigma$-conjugates.\\
\\
(iv) Clearly the entry of $^{T_{i}(-1)}\sigma$ at position $(i,j)$ equals $2\sigma_{0j}+\sigma_{ij}-\sigma_{-i,j}$. Applying (i) to $^{T_{i}(-1)}\sigma$ we get that $T_{kl}(2\sigma_{0j}+\sigma_{ij}-\sigma_{-i,j})$ is a product of $8$ elementary $\sigma$-conjugates. Applying (i) to $\sigma$ we get that $T_{kl}(-\sigma_{ij})$ and $T_{kl}(\sigma_{-i,j})$ each are a product of $8$ elementary $\sigma$-conjugates. It follows that $T_{kl}(2\sigma_{0j})=T_{kl}(2\sigma_{0j}+\sigma_{ij}-\sigma_{-i,j})T_{kl}(-\sigma_{ij})T_{kl}(\sigma_{-i,j})$ is a product of $24$ elementary $\sigma$-conjugates.\\
\\
(v) Clearly the entry of $^{T_{ji}(1)}\sigma$ at position $(j,i)$ equals $\sigma_{ii}-\sigma_{jj}+\sigma_{ji}-\sigma_{ij}$. Applying (i) to $^{T_{ji}(1)}\sigma$ we get that $T_{kl}(\sigma_{ii}-\sigma_{jj}+\sigma_{ji}-\sigma_{ij})$ is a product of $8$ elementary $\sigma$-conjugates. Applying (i) to $\sigma$ we get that $T_{kl}(\sigma_{ij})$ and $T_{kl}(-\sigma_{ji})$ each are a product of $8$ elementary $\sigma$-conjugates. It follows that $T_{kl}(\sigma_{ii}-\sigma_{jj})=T_{kl}(\sigma_{ii}-\sigma_{jj}+\sigma_{ji}-\sigma_{ij})T_{kl}(\sigma_{ij})T_{kl}(-\sigma_{ji})$ is a product of $24$ elementary $\sigma$-conjugates.\\
\\
(vi) Follows from (v) since $T_{kl}(\sigma_{ii}-\sigma_{-i,-i})=T_{kl}(\sigma_{ii}-\sigma_{jj})T_{kl}(\sigma_{jj}-\sigma_{-i,-i})$.\\
\\
(vii) Set $m:=8$. In Step 1 we show that for any $x\in R$ the matrix $T_{k}(x\sigma_{0j}\sigma_{jj})$ is a product of $(2n+9)m+4$ elementary $\sigma$-conjugates. In Step 2 we use Step 1 in order to prove (v).\\
\\
{\bf Step 1.} Set $u':=\begin{pmatrix}0&\dots&0&\sigma'_{-1,-1}&-\sigma'_{-1,-2}\end{pmatrix}^t=\begin{pmatrix}0&\dots&0&\sigma_{11}&-\sigma_{21}\end{pmatrix}^t\in M$ and $u:=\sigma^{-1}u'\in M$. Then clearly $u_{-1}=0$. Further $Q(u)=Q(\sigma^{-1}u')=Q(u')=0$ and hence $u$ is isotropic. Set
\[
\xi:={}^{\sigma}T_{*,-1}(-u)\overset{(\ref{e1})}{=}e-\sigma u\widetilde{\sigma_{*1}}+ \sigma_{*1} \widetilde{\sigma u}=e-u'\widetilde{\sigma_{*1}}+ \sigma_{*1}\widetilde{u'}.
\]
Then
\begin{align*}
\xi=
\arraycolsep=8pt\def\arraystretch{1.5}\left(\begin{array}{cccccc|c|cccccc}
1-\sigma_{11}\sigma_{21}&\sigma_{11}\sigma_{11}&&&&&&&&&&&\\
-\sigma_{21}\sigma_{21}&1+\sigma_{21}\sigma_{11}&&&&&&&&&&&\\
-\sigma_{31}\sigma_{21}&\sigma_{31}\sigma_{11}&1&&&&&&&&&&\\
-\sigma_{41}\sigma_{21}&\sigma_{41}\sigma_{11}&&1&&&&&&&&&\\
\vdots&\vdots&&&\ddots&&&&&&&&\\
-\sigma_{n1}\sigma_{21}&\sigma_{n1}\sigma_{11}&&&&1&&&&&&&\\
\hline-\sigma_{01}\sigma_{21}&\sigma_{01}\sigma_{11}&&&&&1&&&&&&\\
\hline-\sigma_{-n,1}\sigma_{21}&\sigma_{-n,1}\sigma_{11}&&&&&&1&&&&&\\
\vdots&\vdots&&&&&&&\ddots&&&&\\
-\sigma_{-4,1}\sigma_{21}&\sigma_{-4,1}\sigma_{11}&&&&&&&&1&&&\\
-\sigma_{-3,1}\sigma_{21}&\sigma_{-3,1}\sigma_{11}&&&&&&&&&1&&\\
-\alpha&0&*&*&\dots&*&*&*&\dots&*&-\sigma_{31}\sigma_{11}&*&*\\
0&\alpha&*&*&\dots&*&*&*&\dots&*&\sigma_{31}\sigma_{21}&*&*
\end{array}\right)
\end{align*}
where $\alpha=\sigma_{-1,1}\sigma_{11}+\sigma_{-2,1}\sigma_{21}$. Set \[\tau:=T_{-3,1}(\sigma_{-3,1}\sigma_{21})T_{-3,2}(-\sigma_{-3,1}\sigma_{11}).\] 
It follows from (i) that $\tau$ is a product of $2m$ elementary $\sigma$-conjugates. Clearly
\begin{align*}
\xi\tau=
\arraycolsep=8pt\def\arraystretch{1.5}\left(\begin{array}{cccccc|c|cccccc}
1-\sigma_{11}\sigma_{21}&\sigma_{11}\sigma_{11}&&&&&&&&&&&\\
-\sigma_{21}\sigma_{21}&1+\sigma_{21}\sigma_{11}&&&&&&&&&&&\\
-\sigma_{31}\sigma_{21}&\sigma_{31}\sigma_{11}&1&&&&&&&&&&\\
-\sigma_{41}\sigma_{21}&\sigma_{41}\sigma_{11}&&1&&&&&&&&&\\
\vdots&\vdots&&&\ddots&&&&&&&&\\
-\sigma_{n1}\sigma_{21}&\sigma_{n1}\sigma_{11}&&&&1&&&&&&&\\
\hline-\sigma_{01}\sigma_{21}&\sigma_{01}\sigma_{11}&&&&&1&&&&&&\\
\hline-\sigma_{-n,1}\sigma_{21}&\sigma_{-n,1}\sigma_{11}&&&&&&1&&&&&\\
\vdots&\vdots&&&&&&&\ddots&&&&\\
-\sigma_{-4,1}\sigma_{21}&\sigma_{-4,1}\sigma_{11}&&&&&&&&1&&&\\
0&0&&&&&&&&&1&&\\
-\alpha-\beta&\delta&0&*&\dots&*&*&*&\dots&*&*&*&*\\
\gamma&\alpha-\beta&0&*&\dots&*&*&*&\dots&*&*&*&*
\end{array}\right)
\end{align*}
where $\beta=\sigma_{-3,1}\sigma_{21}\sigma_{31}\sigma_{11}$, $\gamma=\sigma_{-3,1}\sigma_{21}\sigma_{31}\sigma_{21}$ and $\delta=\sigma_{-3,1}\sigma_{11}\sigma_{31}\sigma_{11}$. Let $x\in R$ and set  
\begin{align*}
\zeta:={}&^{T_{*,-1}(-u)}[T_{2,-3}(-x),[T_{*,-1}(u),\sigma]\tau]\\
={}&^{T_{*,-1}(-u)}[T_{2,-3}(-x),T_{*,-1}(u)\xi\tau]
\\=&[T_{*,-1}(-u),T_{2,-3}(-x)][T_{2,-3}(-x),\xi\tau],
\end{align*}
the last equality by Lemma \ref{pre}.
Clearly $\zeta$ is a product of $4m+4$ elementary $\sigma$-conjugates. One checks easily that 
\begin{align*}
&[T_{*,-1}(-u),T_{2,-3}(-x)]\\
=&T_{1,-2}(xu_{-3})T_{1,-3}(-xu_{-2})\\
=&T_{1,-2}(x(\sigma_{23}\sigma_{11}-\sigma_{13}\sigma_{21}))T_{1,-3}(-x(\sigma_{22}\sigma_{11}-\sigma_{12}\sigma_{21})).
\end{align*}
Further
\[[T_{2,-3}(-x),\xi\tau]=(\prod\limits_{\substack{p=1,\\p\neq 3,0}}^{-4}T_{p,-3}(x\sigma_{p1}\sigma_{11}))T_{-2,-3}(x\delta)T_{-1,-3}(x(\alpha-\beta))T_{3}(x\sigma_{01}\sigma_{11}).\]
Hence
\begin{align*}
\zeta=&T_{1,-2}(x(\sigma_{23}\sigma_{11}-\sigma_{13}\sigma_{21}))T_{1,-3}(-x((\sigma_{22}-\sigma_{11})\sigma_{11}-\sigma_{12}\sigma_{21}))\cdot\\
&\cdot (\prod\limits_{\substack{p=2,\\p\neq 3,0}}^{-4}T_{p,-3}(x\sigma_{p1}\sigma_{11}))T_{-2,-3}(x\delta)T_{-1,-3}(x(\alpha-\beta))T_{3}(x\sigma_{01}\sigma_{11}).
\end{align*}
It follows from (i), (ii) and (v) that $T_{3}(x\sigma_{01}\sigma_{11})$ is a product of $4m+4+2m+4m+(2n-5)m+m+3m=(2n+9)m+4$ elementary $\sigma$-conjugates. In view of Definition \ref{12} and Lemma \ref{13} we get that $T_{k}(x\sigma_{0j}\sigma_{jj})$ is a product of $(2n+9)m+4$ elementary $\sigma$-conjugates.\\
\\
{\bf Step 2.} Clearly
\begin{align*}
&T_{k}(\sigma_{0j})=T_{k}((\sum\limits_{s\in\Theta}\sigma'_{js}\sigma_{sj})\sigma_{0j})=\prod\limits_{s\in\Theta}T_{k}(\sigma'_{js}\sigma_{sj}\sigma_{0j}).
\end{align*}
By (i) and relation (SE2) in Lemma \ref{10}, $T_{k}(\sigma'_{js}\sigma_{sj}\sigma_{0j})$ is a product of $3m$ elementary $\sigma$-conjugates if $s\neq \pm j,0$. By (ii) and relation (SE2), $T_{k}(\sigma'_{j,-j}\sigma_{-j,j}\sigma_{0j})$ is a product of $3\cdot 2m=6m$ elementary $\sigma$-conjugates. By (iv) and relation (SE2), $T_{k}(\sigma'_{j0}\sigma_{0j}\sigma_{0j})$ is a product of $3\cdot 3m=9m$ elementary $\sigma$-conjugates (note that $\sigma'_{j0}$ is a multiple of $2$ by Lemma \ref{6}). By Step 1, $T_{k}(\sigma'_{jj}\sigma_{jj}\sigma_{0j})$ is a product of $(2n+9)m+4$ elementary $\sigma$-conjugates.  Thus $T_{k}(\sigma_{0j})$ is a product of $(2n-2)\cdot 3m+6m+9m+(2n+9)m+4=8nm+18m+4=64n+148$ elementary $\sigma$-conjugates.\\
\\
(viii) One checks easily that the entry of $^{T_{-j}(1)}\sigma$ at position $(0,j)$ equals $\sigma_{jj}-\sigma_{00}+\sigma_{0j}-\sigma_{j0}-\sigma_{0,-j}-\sigma_{j,-j}$. By applying (vii) to $^{T_{-j}(1)}\sigma$ we get that $T_{k}(\sigma_{jj}-\sigma_{00}+\sigma_{0j}-\sigma_{j0}-\sigma_{0,-j}-\sigma_{j,-j})$ is a product of $64n+148$ elementary $\sigma$-conjugates. By applying (vii) to $\sigma$ we get that $T_{k}(-\sigma_{0j})$ and $T_{k}(\sigma_{0,-j})$ each are a product of $64n+148$ elementary $\sigma$-conjugates. By (iii) and relation (SE2) in Lemma \ref{10}, $T_{k}(\sigma_{j0})$ is a product of $3\cdot 24=72$ elementary $\sigma$-conjugates. By (ii) and relation (SE2) in Lemma \ref{10}, $T_{k}(\sigma_{j,-j})$ is a product of $3\cdot 16=48$ elementary $\sigma$-conjugates. It follows that $T_{k}(\sigma_{jj}-\sigma_{00})=T_{k}(\sigma_{jj}-\sigma_{00}+\sigma_{0j}-\sigma_{j0}-\sigma_{0,-j}-\sigma_{j,-j})T_{k}(-\sigma_{0j})T_{k}(\sigma_{j0})T_{k}(\sigma_{0,-j})T_{k}(\sigma_{j,-j})$ is a product of $3\cdot(64n+148)+72+48=192n+564$ elementary $\sigma$-conjugates.
\end{proof}

As a corollary we get the Sandwich Classification Theorem for $O_{2n+1}(R)$.
\begin{corollary}
Let $H$ be a subgroup of $O_{2n+1}(R)$. Then $H$ is normalized by $EO_{2n+1}(R)$ iff 
\begin{equation}
EO_{2n+1}(R,I,J)\subseteq H\subseteq CO_{2n+1}(R,I,J)
\end{equation}
for some admissible pair $(I,J)$.
\end{corollary}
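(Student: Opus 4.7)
The plan is to handle the two implications separately. For the ``if'' direction, suppose $EO_{2n+1}(R,I,J) \subseteq H \subseteq CO_{2n+1}(R,I,J)$ for some admissible pair $(I,J)$. Given $\epsilon \in EO_{2n+1}(R)$ and $h \in H$, write ${}^\epsilon h = [\epsilon,h]h$; since $h \in CO_{2n+1}(R,I,J)$, Theorem \ref{23} yields
\[
[\epsilon, h] \in [EO_{2n+1}(R), CO_{2n+1}(R,I,J)] = EO_{2n+1}(R,I,J) \subseteq H,
\]
so ${}^\epsilon h \in H$ and $H$ is $EO_{2n+1}(R)$-normalized.

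For the ``only if'' direction, assume $H$ is normalized by $EO_{2n+1}(R)$ and define
\begin{align*}
I &:= \{x \in R \mid T_{kl}(x) \in H \text{ for all } k,l \in \Theta_{hb},\ k \neq \pm l\},\\
J &:= \{x \in R \mid T_k(x) \in H \text{ for all } k \in \Theta_{hb}\}.
\end{align*}
Once it is verified that $(I,J)$ is an admissible pair of ideals, the inclusion $H \subseteq CO_{2n+1}(R,I,J)$ will drop out of Theorem \ref{mthm1} combined with Lemma \ref{19}: for $\sigma \in H$, parts (i)-(vi) of Theorem \ref{mthm1} write $T_{kl}$ of each relevant entry or difference of entries as a product of elementary $\sigma$-conjugates, all of which lie in $H$ by $E$-normality, so $\sigma_{ij}$ (for $i \ne j$) and $\sigma_{ii}-\sigma_{jj}$ lie in $I$; parts (vii)-(viii) analogously place $\sigma_{0j}$ and $\sigma_{00}-\sigma_{jj}$ in $J$. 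Dually, the lower inclusion $EO_{2n+1}(R,I,J) \subseteq H$ is automatic: every $(I,J)$-elementary matrix lies in $H$ by the very definitions of $I$ and $J$, and $H$ is $EO_{2n+1}(R)$-normalized, so it contains the full normal closure.

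The substantive work is thus to verify that $I$ and $J$ are ideals and that $(I,J)$ is admissible. I would carry this out using the commutation relations of Lemma \ref{10} in the following order. Additive closure of $I$ and $J$ is immediate from (S2) and (E1). The relation $[T_l(1), T_k(y)] = T_{l,-k}(-2y)$ from (E2) puts $T_{l,-k}(-2y)$ in $H$ whenever $y \in J$, since $T_l(1) \in EO_{2n+1}(R)$ normalizes $H$ and $T_k(y) \in H$; hence $2J \subseteq I$. The relation $[T_{ij}(1), T_j(y)] = T_{j,-i}(-y^2)T_i(y)$ from (SE2) puts the left-hand side in $H$ for $y \in J$, and combined with $T_i(y) \in H$ forces $y^2 \in I$, i.e.\ $J^2 \subseteq I$. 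Taking $y=1$ in (SE2) yields $T_i(x) = T_{j,-i}(x)[T_{ij}(x), T_j(1)] \in H$ for $x \in I$, so $I \subseteq J$. The ideal property of $I$ then follows from (S4) via $T_{kl}(rx) = [T_{km}(r), T_{ml}(x)]$, with $m \in \Theta_{hb}$ chosen distinct from $\pm k, \pm l$ (possible since $n \geq 3$). Finally, the ideal property of $J$ follows from (SE2) via $T_i(ry) = T_{j,-i}(ry^2)[T_{ij}(r), T_j(y)]$, where $ry^2 \in I$ by virtue of $J^2 \subseteq I$ and the already-established ideal structure of $I$.

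The main subtlety will be the bootstrapping order: the ideal structure of $J$ rests on $J^2 \subseteq I$, which itself needs $T_i(y) \in H$ for $y \in J$. This last fact is built into the definition of $J$, so there is no genuine circularity, but one must prove the admissibility inclusions \emph{before} establishing the $R$-stability of $J$. With that ordering settled, everything else is routine assembly of Theorem \ref{mthm1}, Lemma \ref{10}, and Lemma \ref{19}.
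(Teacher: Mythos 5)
Your proposal is correct and follows essentially the same route as the paper: define $I$ and $J$ by which root elements lie in $H$ (the paper uses the single positions $T_{12}$ and $T_{1}$, which by Lemma \ref{13} and $E$-normality agrees with your all-positions version), get the lower inclusion from the definition, the upper inclusion from Theorem \ref{mthm1} together with Lemma \ref{19}, and the converse from Theorem \ref{23}. The only difference is that you spell out the verification that $(I,J)$ is an admissible pair via the relations of Lemma \ref{10}, which the paper leaves implicit; your bootstrapping order there is sound.
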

\begin{proof}
First suppose that $H$ is normalized by $EO_{2n+1}(R)$. Let $(I,J)$ be the admissible pair defined by $I:=\{x\in R\mid T_{12}(x)\in H\}$ and $J:=\{x\in R\mid T_{1}(x)\in H\}$. Then clearly $EO_{2n+1}(R,I,J)\subseteq H$. It remains to show that $H\subseteq CO_{2n+1}(R,I,J)$, i.e. that if $\sigma\in H$,  then the conditions (i)-(iv) in Lemma \ref{19} are satisfied. But that follows from the previous theorem. Suppose now that (2) holds for some admissible pair $(I,J)$. Then it follows from the standard commutator formula in Theorem \ref{23} that $H$ is normalized by $EO_{2n+1}(R)$.
\end{proof}
\section{Odd-dimensional unitary groups}
We describe Hermitian form rings $(R,\Delta)$ and odd form ideals $(I,\Omega)$ first, then the odd-dimensional unitary group $U_{2n+1}(R,\Delta)$ and its elementary subgroup $EU_{2n+1}(R,\Delta)$ over a Hermitian form ring $(R,\Delta)$. For an odd form ideal $(I,\Omega)$, we recall the definitions of the following subgroups of $U_{2n+1}(R,\Delta)$; the preelementary subgroup $EU_{2n+1}(I, \Omega)$ of level $(I,\Omega)$, the elementary subgroup $EU_{2n+1}((R,\Lambda),(I,\Omega))$ of level $(I,\Omega)$, the principal congruence subgroup $U_{2n+1}((R,\Lambda),(I,\Omega))$ of level $(I,\Omega)$, the normalized principal congruence subgroup $NU_{2n+1}((R,\Lambda),(I,\Omega))$ of level $(I,\Omega)$, and the full congruence subgroup $CU_{2n+1}((R,\Lambda),(I,\Omega))$ of level $(I,\Omega)$.
\subsection{Hermitian form rings and odd form ideals}\label{sec 3.1}
First we recall the definitions of a ring with involution with symmetry and a Hermitian ring.
\begin{definition}
Let $R$ be a ring and 
\begin{align*}
\bar{}:R&\rightarrow R\\
x&\mapsto \bar{x}
\end{align*}
an anti-isomorphism of $R$ (i.e. $\bar{}~$ is bijective, $\overline{x+y}=\bar x+\bar y$, $\overline{xy}=\bar y\bar x$ for any $x,y\in R$ and $\bar 1=1$). Further let $\lambda\in R$ such that $\bar{\bar x}=\lambda x\bar\lambda$ for any $x\in R$. Then $\lambda$ is called a {\it symmetry} for $~\bar{}~$, the pair $(~\bar{}~,\lambda)$ an {\it involution with symmetry} and the triple $(R,~\bar{}~,\lambda)$ a {\it ring with involution with symmetry}. A subset $A\subseteq R$ is called {\it involution invariant} iff $\bar x\in A$ for any $x\in A$. We call a quadruple $(R,~\bar{}~,\lambda,\mu )$ where $(R,~\bar{}~,\lambda)$ is a ring with involution with symmetry and $\mu \in R$ such that $\mu =\bar\mu \lambda$ a {\it Hermitian ring}.
\end{definition}
\begin{remark}\label{25}
Let $(R,~\bar{}~,\lambda,\mu )$ be a Hermitian ring.
\begin{enumerate}[(a)]
\item It is easy to show that $\bar\lambda=\lambda^{-1}$.
\item The map
\begin{align*}
\b{}:R&\rightarrow R\\
x&\mapsto \b{x}:=\bar\lambda \bar x\lambda
\end{align*}
is the inverse map of $~\bar{}~$. One checks easily that $(R,~\b{}~,\b{$\lambda$},\b{$\mu $})$ is a Hermitian ring.
\end{enumerate}
\end{remark}

Next we recall the definition of an $R^{\bullet}$-module.
\begin{definition}
If $R$ is a ring, let $R^\bullet$ denote the underlying set of the ring equipped with the  
multiplication of the ring, but not the addition of the ring. A {\it (right) $R^{\bullet}$-module} is a not 
necessarily abelian group $(G,\+)$ equipped with a map 
\begin{align*}
\circ: G\times R^{\bullet}&\rightarrow G\\
(a,x) &\mapsto a\circ x
\end{align*}
such that the following holds:
\begin{enumerate}[(i)]
\item $a\circ 0=0$ for any $a\in G$,
\item $a\circ 1=a$ for any $a\in G$,
\item $(a\circ x)\circ y=a\circ (xy)$ for any $a\in G$ and $x,y\in R$ and
\item $(a\+ b)\circ x=(a\circ x)\+(b\circ x)$ for any $a,b\in G$ and $x\in R$.
\end{enumerate}
A left $R^{\bullet}$-module is defined analogously. An $R$-module is canonically an $R^{\bullet}$-module, but not conversely. Let $G$ and $G'$ be $R^{\bullet}$-modules. A group homomorphism $f:G\rightarrow G'$ satisfying $f(a\circ x)=f(a)\circ x$ for any $a\in G$ and $x\in R$ is called a {\it  homomorphism of $R^{\bullet}$-modules}. A subgroup $H$ of $G$ which is $\circ$-stable (i.e. $a\circ x\in H$ for any $a\in H$ and $x\in R$) is called an {\it $R^{\bullet}$-submodule}. Further, if $A\subseteq G$ and $B\subseteq R$, we denote by $A\circ B$ the subgroup of $G$ generated by $\{a\circ b\mid a\in A,b\in B\}$. We treat $\circ$ as an operator with higher priority than $\+$.
\end{definition}

An important example of an $R^{\bullet}$-module is the Heisenberg group, which we define next. The odd form parameters $\Delta$ which are used to define the odd-dimensional unitary groups are certain $R^{\bullet}$-submodules of the Heisenberg group. 
\begin{definition}\label{27}
Let $(R,~\bar{}~,\lambda,\mu )$ be a Hermitian ring. Define the map.
\begin{align*}
\+: (R\times R)\times (R\times R) &\rightarrow R\times R\\
((x_1,y_1),(x_2,y_2))&\mapsto (x_1,y_1)\+ (x_2,y_2):=(x_1+x_2,y_1+y_2-\bar x_1\mu  x_2).
\end{align*}
Then $(R\times R,\+)$ is a group, which we call the {\it Heisenberg group} and denote by $\h$. Equipped with the map
\begin{align*}
\circ:(R\times R)\times R^{\bullet}&\rightarrow R\times R\\
((x,y),a)&\mapsto (x,y)\circ a:=(xa,\bar aya)
\end{align*}
$\h$ becomes an $R^{\bullet}$-module. 
\end{definition}
\begin{remark}
We denote the inverse of an element $(x,y)\in \h$ by $\minus(x,y)$. One checks easily that $\minus(x,y)=(-x,-y-\bar x\mu  x)$ for any $(x,y)\in \h$.
\end{remark}

In order to define the odd-dimensional unitary groups we need the notion of a Hermitian form ring.
\begin{definition}
Let $(R,~\bar{}~,\lambda,\mu )$ be a Hermitian ring. Let $(R,+)$ have the $R^{\bullet}$-module structure defined by $x\circ a = \bar{a}xa$. Define the {\it trace map}
\begin{align*}
tr:\h&\rightarrow R\\
(x,y)&\mapsto \bar x\mu  x+y+\bar y\lambda.
\end{align*}
One checks easily that $tr$ is a homomorphism of $R^{\bullet}$-modules. Set \[\Delta_{min}:=\{(0,x-\overline{x}\lambda)\mid x\in R\}\] and \[\Delta_{max}:=ker(tr).\] An $R^{\bullet}$-submodule $\Delta$ of $\h$ lying between $\Delta_{min}$ and $\Delta_{max}$ is called an {\it odd form parameter of $(R,~\bar{}~,\lambda,\mu )$}. Since $\Delta_{min}$ and $\Delta_{max}$ are $R^{\bullet}$-submodules of $\h$, they are respectively the smallest and largest odd form parameters. A pair $((R,~\bar{}~,\lambda,\mu ),\Delta)$ is called a {\it Hermitian form ring}. We shall usually abbreviate it by $(R,\Delta)$. 
\end{definition}
 
Next we define an odd form ideal of a Hermitian form ring. 
\begin{definition}
Let $(R,\Delta)$ be a Hermitian form ring and $I$ an involution invariant ideal of $R$. Set $J(\Delta):=\{y\in R\mid\exists z\in R:(y,z)\in \Delta\}$ and $\tilde I:=\{x\in R\mid\overline{J(\Delta)}\mu  x\subseteq I\}$. Obviously $\tilde I$ and $J(\Delta)$ are right ideals of $R$ and $I\subseteq \tilde I$. Further set \[\Omega^I_{min}:=\{(0,x-\bar x\lambda)\mid x\in I\}\+ \Delta\circ I\] and \[\Omega^I_{max}:=\Delta\cap (\tilde I\times  I).\]
An $R^{\bullet}$-submodule $\Omega$ of $\h$ lying between $\Omega^I_{min}$ and $\Omega^I_{max}$ is called a {\it relative odd form parameter of level $I$}. Since $\Omega^I_{min}$ and $\Omega^I_{max}$ are $R^{\bullet}$-submodules of $\h$, they are respectively the smallest and the largest relative odd form parameters of level $I$. If $\Omega$ is a relative odd form parameter of level $I$, then $(I,\Omega)$ is called an {\it odd form ideal of $(R,\Delta)$}.
\end{definition}

The following lemma is straightforward to check. It will be used in the proof of Theorem \ref{mthm2}. 
\begin{lemma}\label{last}
Let $(R,~\bar{}~,\lambda,\mu )$ be a Hermitian ring, $(a,b)\in \Delta_{max}$, $n\in\mathbb{N}$ and $x_1,\dots,x_n\in R$. Then
\[(a,b)\circ \sum\limits_{i=1}^nx_i=(\plus\limits_{i=1}^n (a,b)\circ x_i)\+(0,\sum\limits_{\substack{i,j=1,\\i>j}}^n x_ib\bar x_j-\overline{x_ib\bar x_j}\lambda).\]
\end{lemma}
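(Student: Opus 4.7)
The plan is to verify the identity by a direct computation, expanding both sides using the definitions of $\+$ and $\circ$ from Definition~\ref{27} and invoking the hypothesis $(a,b)\in\Delta_{max}$. An induction on $n$ is not strictly necessary, though one can equivalently reduce to the case $n=2$ by regrouping $\sum_{i=1}^n x_i=(x_1+\dots+x_{n-1})+x_n$ and iterating together with the induction hypothesis.

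On the left-hand side, the definition of $\circ$ immediately gives second coordinate $\sum_{i,j}\bar x_j b x_i$, which I would split into the three pieces $i=j$, $i<j$, and $i>j$. For the right-hand side, a short preliminary induction yields
\[\plus_{i=1}^n(u_i,v_i)=\Bigl(\sum_i u_i,\;\sum_i v_i-\sum_{i<j}\bar u_i\mu u_j\Bigr),\]
and substituting $(u_i,v_i)=(a,b)\circ x_i=(ax_i,\bar x_i b x_i)$ produces a $\mu$-correction involving $\bar x_i\bar a\mu a x_j$. This is where the hypothesis enters: since $tr(a,b)=\bar a\mu a+b+\bar b\lambda=0$, I can substitute $\bar a\mu a=-b-\bar b\lambda$, and the $\mu$-correction decomposes into $b$-terms and $\bar b\lambda$-terms.

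Comparing the two expansions, the diagonal pieces match, and after a reindexing $i\leftrightarrow j$ the $b$-correction cancels against the $i<j$ half of the left-hand double sum. What survives is a sum, indexed over pairs $i>j$, of expressions of the form $y-\bar y\lambda$ (hence lying in $\Delta_{min}$), which is absorbed into the second coordinate via $(\alpha,\beta)\+(0,\gamma)=(\alpha,\beta+\gamma)$ to produce the claimed right-hand side. The verification that the residual terms indeed assume the advertised form rests on the identities $\bar\lambda=\lambda^{-1}$ and $\bar{\bar x}=\lambda x\bar\lambda$ from Remark~\ref{25}(a). The main obstacle is really only the bookkeeping: tracking the double-indexed sums, performing the $i\leftrightarrow j$ reindexings correctly, and keeping the involution and the symmetry straight so that the correction comes out in precisely the stated form.
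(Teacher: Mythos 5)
Your computation is exactly the ``straightforward check'' that the paper has in mind --- it offers no proof of Lemma~\ref{last} beyond that remark --- so there is no alternative argument to compare against. Expanding the left-hand side as $\sum_{i,j}\bar x_i b x_j$, expanding $\plus_i(ax_i,\bar x_i b x_i)$ with the $\mu$-correction $-\sum_{i<j}\overline{ax_i}\mu ax_j$, substituting $\bar a\mu a=-b-\bar b\lambda$ from $tr(a,b)=0$, and reindexing is the right route, and $\bar\lambda=\lambda^{-1}$, $\bar{\bar x}=\lambda x\bar\lambda$ are indeed the identities that close the bookkeeping.

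One concrete caution, however: if you actually carry the computation out, the surviving residual is $\sum_{i>j}\bigl(\bar x_i b x_j-\overline{\bar x_i b x_j}\lambda\bigr)$, not $\sum_{i>j}\bigl(x_i b\bar x_j-\overline{x_i b\bar x_j}\lambda\bigr)$ as stated in the lemma. These genuinely differ in general: take $R=\mathbb{C}$ with complex conjugation, $\lambda=\mu=1$, $(a,b)=(\sqrt2,-1+i)\in\Delta_{max}$, $x_1=1$, $x_2=i$; a direct check shows the required correction is $2i$, which is $\bar x_2bx_1-\overline{\bar x_2bx_1}$, whereas the stated formula gives $x_2b\bar x_1-\overline{x_2b\bar x_1}=-2i$. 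So the last step of your plan --- that ``the residual terms indeed assume the advertised form'' --- would fail as literally written; the bars in the statement are misplaced. The discrepancy is harmless for the paper's application (both candidate residuals lie in $\Delta_{min}$ and have second coordinate in $I(b)$, which is all that Step~2 of the proof of Theorem~\ref{mthm2}(vii) uses), but you should prove the identity with $\bar x_i b x_j$ rather than assert that the stated form is reached.
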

\subsection{The odd-dimensional unitary group}
Let $(R,\Delta)$ be a Hermitian form ring and $n\in \mathbb{N}$. Let $M$, $e_1,\dots,e_n,e_0,e_{-n},\dots,e_{-1}$ and $p$ be defined as in Definition \ref{defog}. If $u\in M$, then we call $(u_1,\dots,u_n,u_{-n},\dots,$ $u_{-1})^t\in R^{2n}$ the {\it hyperbolic part of $u$} and denote it by $u_{hb}$. Further we set $u^*:=\bar u^t$ and $u_{hb}^*:=\bar u_{hb}^t$. Define the maps
\begin{align*}
b:M\times M&\rightarrow R\\
(u,v)&\mapsto u^*\begin{pmatrix} 0& 0 & p\\0&\mu &0\\ p\lambda &0 &0 \end{pmatrix}v=\sum\limits_{i=1}^{n}\bar u_i v_{-i}+\bar u_0\mu  v_0+\sum\limits_{i=-n}^{-1}\bar u_{i}\lambda v_{-i}
\end{align*}
and 
\begin{align*}
q:M&\rightarrow \h\\
u&\mapsto (q_1(u),q_2(u)):=(u_0,u_{hb}^*\begin{pmatrix} 0&p\\0&0 \end{pmatrix}u_{hb})=(u_0,\sum\limits_{i=1}^{n}\bar u_i u_{-i}).
\end{align*}

\begin{lemma}
~\\
\vspace{-0.6cm}
\begin{enumerate}[(i)]
\item $b$ is a \textnormal{$\lambda$-Hermitian form}, i.e. $b$ is biadditive, $b(ux,vy)=\bar x b(u,v) y~\forall u,v\in M,x,y\in R$ and $b(u,v)=\overline{b(v,u)}\lambda~\forall u,v\in M$.
\item $q(ux)=q(u)\circ x~\forall u\in M, x\in R$, $q(u+v)\equiv q(u)\+ q(v)\+(0,b(u,v))\bmod \Delta_{min}~\forall u,v\in M$ and $tr(q(u))=b(u,u)~\forall u\in M$.
\end{enumerate}
\end{lemma}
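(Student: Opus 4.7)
All parts reduce to direct computation from the matrix/coordinate definitions combined with the basic identities of a Hermitian ring: $\bar{\bar x}=\lambda x\bar\lambda$, $\bar\lambda\lambda=1=\lambda\bar\lambda$ (Remark~\ref{25}(a)), and $\mu=\bar\mu\lambda$.

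For (i), biadditivity of $b$ is immediate from the matrix formula $b(u,v)=u^*Gv$, since transpose-bar and matrix multiplication distribute over addition. The sesquilinearity $b(ux,vy)=\bar x\,b(u,v)\,y$ follows from $(ux)^*=\bar x\,u^*$ and $R$-linearity of matrix multiplication on the right. For the $\lambda$-Hermitian identity, I would compute $\overline{b(v,u)}\lambda$ term by term: using $\bar{\bar u_i}=\lambda u_i\bar\lambda$ and $\bar\lambda\lambda=1$, the summand $\overline{\bar v_i u_{-i}}\lambda$ becomes $\bar u_{-i}\lambda v_i$ (matching the $i$-th negative-index summand of $b(u,v)$), the summand $\overline{\bar v_{-i}\lambda u_i}\lambda$ becomes $\bar u_i v_{-i}$ (matching the positive-index summand), and the middle term $\overline{\bar v_0\mu u_0}\lambda$ becomes $\bar u_0\bar\mu\lambda v_0=\bar u_0\mu v_0$ using $\mu=\bar\mu\lambda$. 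Summing gives $b(u,v)$.

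For (ii), the first identity $q(ux)=q(u)\circ x$ is clear from $(ux)_0=u_0x$, $(ux)_{hb}=u_{hb}x$ and $(ux)_{hb}^*=\bar x\,u_{hb}^*$. The third identity is a direct computation: $\overline{q_2(u)}\lambda=\overline{\sum_{i=1}^n\bar u_i u_{-i}}\lambda=\sum_{i=1}^n\bar u_{-i}\lambda u_i$, and adding $\bar u_0\mu u_0+q_2(u)$ produces exactly $b(u,u)$.

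For the middle identity in (ii), I would expand $q(u+v)$ using biadditivity of each coordinate:
\[
q(u+v)=\Bigl(u_0+v_0,\;\sum_{i=1}^n\bar u_iu_{-i}+\sum_{i=1}^n\bar v_iv_{-i}+\sum_{i=1}^n\bar u_iv_{-i}+\sum_{i=1}^n\bar v_iu_{-i}\Bigr),
\]
while the definition of $\+$ and $\+(0,b(u,v))$ gives
\[
q(u)\+q(v)\+(0,b(u,v))=\Bigl(u_0+v_0,\;\sum_{i=1}^n\bar u_iu_{-i}+\sum_{i=1}^n\bar v_iv_{-i}-\bar u_0\mu v_0+b(u,v)\Bigr).
\]
Substituting the definition of $b(u,v)$, the $\bar u_0\mu v_0$ contributions cancel and the difference of the two second components equals $\sum_{i=1}^n(\bar u_{-i}\lambda v_i-\bar v_iu_{-i})$. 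Setting $y_i:=\bar u_{-i}\lambda v_i$, the computation $\overline{y_i}\lambda=\bar v_i\bar\lambda\cdot\lambda u_{-i}\bar\lambda\cdot\lambda=\bar v_iu_{-i}$ (by $\bar{\bar u_{-i}}=\lambda u_{-i}\bar\lambda$) shows each summand has the shape $y_i-\overline{y_i}\lambda$; hence the total difference is $(0,y-\bar y\lambda)$ with $y=\sum y_i$, which lies in $\Delta_{min}$. The main (minor) obstacle is bookkeeping in this last step: one must track the swap of positive and negative indices carefully and use the symmetry relation for $\bar{\bar{\cdot}}$ at precisely the right moment so that the surplus $\lambda$'s cancel and the leftover terms assemble into a single $y-\bar y\lambda$.
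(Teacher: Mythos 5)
Your computations are correct and complete, and they are exactly the "straightforward computation" that the paper gives as its entire proof; in particular your handling of the $\lambda$-Hermitian identity via $\mu=\bar\mu\lambda$ and of the $\Delta_{min}$-discrepancy via $y_i=\bar u_{-i}\lambda v_i$, $\overline{y_i}\lambda=\bar v_iu_{-i}$ is the intended argument. Nothing further is needed.
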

\begin{proof}
{Straightforward computation.}
\end{proof}
\begin{definition}
The group
\[U_{2n+1}(R,\Delta):=\{\sigma\in GL_{2n+1}(R)\mid b(\sigma u,\sigma v)=b(u,v)~\forall u,v\in M \text{ and }q(\sigma u)\equiv q(u)\bmod \Delta~\forall u\in M\}\]
is called the {\it odd-dimensional unitary group}.
\end{definition}
\begin{remark}
The odd-dimensional unitary groups $U_{2n+1}(R,\Delta)$ are isomorphic to Petrov's odd hyperbolic unitary groups with $V_0=R$ (see \cite{petrov}). Namely $U_{2n+1}(R,\Delta)$ is isomorphic to Petrov's group $U_{2l}(R,\mathfrak{L})$ where Petrov's pseudoinvolution $~\hat{}~$ is defined by $\hat x=-\bar x\lambda$, $V_0=R$, $B_0(a,b)=\hat a\hat 1^{-1}\mu  b$, $\mathfrak{L}=\{(x,y)\in R\times R\mid(x,-y)\in\Delta\}$ and $l=n$.
\end{remark}
\begin{example}\label{34}
The groups $U_{2n+1}(R,\Delta)$ include as special cases the even-dimensional unitary groups $U_{2n}(R,\Lambda)$ where $n\in\mathbb{N}$ and $(R,\Lambda)$ is a form ring, the general linear groups $GL_{n}(R)$ where $R$ is any ring and $n\geq 2$ and further the symplectic groups $Sp_{n}(R)$ and the orthogonal groups $O_{n}(R)$ where $R$ is a commutative ring and $n\geq 2$. For details see \cite[Example 15]{bak-preusser}. 
\end{example}
\begin{definition}
Define $\Theta_+:=\{1,\dots,n\}$, $\Theta_-:=\{-n,\dots,-1\}$, $\Theta:=\Theta_+\cup\Theta_-\cup\{0\}$, $\Theta_{hb}:=\Theta\setminus \{0\}$ and the map 
\begin{align*}\epsilon:\Theta_{hb} &\rightarrow\{\pm 1\}\\
i&\mapsto\begin{cases} 1, &\mbox{if } i\in\Theta_+, \\ 
-1, & \mbox{if } i\in\Theta_-. \end{cases}
\end{align*}
Further if $i,j\in \Theta$, we write $i<j$ iff either $i\in \Theta_+\land j=0$ or $i=0\land j\in \Theta_-$ or $i,j\in \Theta_+\land i<j$ or $i,j\in \Theta_-\land i<j$ or $i\in \Theta_+\land j\in \Theta_-$.
\end{definition}
\begin{lemma}\label{36}
Let $\sigma\in GL_{2n+1}(R)$. Then $\sigma\in U_{2n+1}(R,\Delta)$ iff the conditions (i) and (ii) below hold. 
\begin{enumerate}[(i)]
\item \begin{align*}
       \sigma'_{ij}&=\lambda^{-(\epsilon(i)+1)/2}\bar\sigma_{-j,-i}\lambda^{(\epsilon(j)+1)/2}~\forall i,j\in\Theta_{hb},\\
       \mu \sigma'_{0j}&=\bar\sigma_{-j,0}\lambda^{(\epsilon(j)+1)/2}~\forall j\in\Theta_{hb},\\
       \sigma'_{i0}&=\lambda^{-(\epsilon(i)+1)/2}\bar\sigma_{0,-i}\mu ~\forall i\in\Theta_{hb} \text { and}\\
       \mu \sigma'_{00}&=\bar\sigma_{00}\mu .
      \end{align*}
\item \begin{align*}
q(\sigma_{*j})\equiv (\delta_{0j},0)\bmod \Delta ~\forall j\in \Theta.
\end{align*} 
\end{enumerate}
\end{lemma}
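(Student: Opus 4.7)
The plan is to translate the two defining conditions of $U_{2n+1}(R,\Delta)$—preservation of the $\lambda$-Hermitian form $b$ and preservation of $q$ modulo $\Delta$—into entry-level identities on $\sigma$: condition (i) will encode the former, while (i) together with (ii) will encode the latter.

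For the first equivalence, I would write $b(u,v)=u^*Fv$ with $F$ the Gram matrix from the definition of $b$. Preservation of $b$ becomes $\sigma^*F\sigma=F$, or equivalently (using $\sigma\in GL_{2n+1}(R)$) $\sigma^*F=F\sigma^{-1}$. A direct computation shows the $(i,j)$-entry of $\sigma^*F$ equals $\bar\sigma_{-j,i}$, $\bar\sigma_{-j,i}\lambda$, or $\bar\sigma_{0,i}\mu$ according as $j\in\Theta_-$, $j\in\Theta_+$, or $j=0$, while the $(i,j)$-entry of $F\sigma^{-1}$ equals $\sigma'_{-i,j}$, $\lambda\sigma'_{-i,j}$, or $\mu\sigma'_{0,j}$ according as $i\in\Theta_+$, $i\in\Theta_-$, or $i=0$. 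Relabeling $i\mapsto -i$ and using $\epsilon$ to absorb the $\lambda$-exponents, the nine resulting families of scalar identities collapse to exactly the four relations listed in (i). Hence (i) is equivalent to preservation of $b$.

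For the second equivalence, the forward direction is immediate by specialization at $u=e_j$, using $q(e_j)=(\delta_{0j},0)$. For the converse, assume (i) and (ii), write $u=\sum_{j\in\Theta}e_jx_j$ so $\sigma u=\sum_j\sigma_{*j}x_j$, and iterate the identity $q(w+w')\equiv q(w)\+q(w')\+(0,b(w,w'))\bmod\Delta_{min}$ (from the unlabeled lemma just preceding the statement) along a fixed ordering of $\Theta$. Combined with $q(wx)=q(w)\circ x$, this produces
\[q(\sigma u)\equiv\Big(\plus_{j\in\Theta}q(\sigma_{*j})\circ x_j\Big)\+\Big(0,\sum_{i<j}\bar x_i b(\sigma_{*i},\sigma_{*j})x_j\Big)\pmod{\Delta_{min}},\]
and the analogous expansion for $q(u)$ with $e_j$ in place of $\sigma_{*j}$. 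By (ii) together with $\Delta$ being an $R^{\bullet}$-submodule, $q(\sigma_{*j})\circ x_j\equiv q(e_j)\circ x_j\bmod\Delta$; by (i) (already shown to be equivalent to $b$-preservation), $b(\sigma_{*i},\sigma_{*j})=b(e_i,e_j)$. Since $\Delta_{min}\subseteq\Delta$, the two expansions agree modulo $\Delta$, yielding $q(\sigma u)\equiv q(u)\bmod\Delta$.

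The main delicate point will be the bookkeeping around the non-abelian Heisenberg addition $\+$: to pass from termwise equivalences modulo $\Delta$ to equivalence of the whole iterated sums, $\Delta$ must be a normal subgroup of $\h$. A short computation using $\mu=\bar\mu\lambda$ reduces the commutator $(x,y)\+(a,b)\minus(x,y)\minus(a,b)$ to $(0,z-\bar z\lambda)\in\Delta_{min}\subseteq\Delta$ for any $(a,b)\in\Delta$, confirming normality. Fixing the same ordering of $\Theta$ on both sides then forces any non-commutative correction terms to match termwise and drop out in the comparison modulo $\Delta$.
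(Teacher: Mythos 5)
Your proof is correct, but the paper does not actually prove this lemma itself: its ``proof'' is the single line ``See \cite[Lemma 17]{bak-preusser}'', so your argument is a self-contained substitute rather than a variant of an in-paper argument. Your two reductions are exactly the right ones. For (i), rewriting $b$-invariance as the Gram-matrix identity $\sigma^*F=F\sigma^{-1}$ and matching entries blockwise does collapse to the four displayed relations (the $\lambda$-exponents $\lambda^{\pm(\epsilon(\cdot)+1)/2}$ arise precisely from which block of $F$ carries the factor $\lambda$, and only left multiplication by $\lambda^{-1}$ is needed, so no centrality of $\lambda$ is used). For (ii), the forward direction by specializing to $u=e_j$ is immediate, and your converse correctly combines $q(wx)=q(w)\circ x$, the cocycle identity $q(w+w')\equiv q(w)\+q(w')\+(0,b(w,w'))\bmod\Delta_{min}$, and $b$-invariance to match the cross terms. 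The point you flag as delicate is genuinely the crux, and your resolution is right and in fact slightly stronger than you state: the commutator of \emph{any} two elements of $\h$ equals $(0,\bar x_2\mu x_1-\overline{\bar x_2\mu x_1}\lambda)\in\Delta_{min}$ (using $\bar{\bar x}=\lambda x\bar\lambda$ and $\mu=\bar\mu\lambda$), so $\h/\Delta_{min}$ is abelian and every odd form parameter is automatically normal; this makes the termwise comparison of the two iterated $\+$-expansions unambiguous. What your route buys is a proof readable without consulting \cite{bak-preusser}; what the paper's citation buys is brevity and consistency with the conventions of the reference, where the same computation is carried out once for the general quasi-finite setting.
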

\begin{proof}
See \cite[Lemma 17]{bak-preusser}.
\end{proof}
\subsection{The polarity map}
\begin{definition}
The map
\begin{align*}
\widetilde{}: M&\longrightarrow M^*\\
u&\longmapsto \begin{pmatrix} \bar u_{-1}\lambda&\dots&\bar u_{-n}\lambda&\bar u_0\mu&\bar u_{n}&\dots&\bar u_1\end{pmatrix}
\end{align*}
where $M^*={}^{2n+1}\!R$ is called {\it polarity map}. Clearly $~\widetilde{}~$ is {\it involutary linear}, i.e. $\widetilde{u+v}=\tilde u+\tilde v$ and $\widetilde{ux}=\bar x\tilde u$ for any $u,v\in M$ and $x\in R$.
\end{definition}
\begin{lemma}\label{38}
If $\sigma\in U_{2n+1}(R,\Delta)$ and $u\in M$, then $\widetilde{\sigma u}=\tilde u\sigma^{-1}$.
\end{lemma}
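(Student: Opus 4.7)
The plan is to prove this lemma by a direct componentwise verification, in exact analogy with the orthogonal case (Lemma \ref{8}), using only the characterization of $U_{2n+1}(R,\Delta)$ given in Lemma \ref{36}(i). Both $\widetilde{\sigma u}$ and $\tilde u \sigma^{-1}$ are elements of $M^* = {}^{2n+1}\!R$, so it suffices to show that their $j$-th entries agree for every $j\in\Theta$.

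Unfolding the definition of the polarity map, the $j$-th entry of $\widetilde{\sigma u}$ is $\overline{(\sigma u)_{-j}}\lambda^{(\epsilon(j)+1)/2}$ for $j\in\Theta_{hb}$ and $\overline{(\sigma u)_0}\mu$ for $j=0$. Expanding $(\sigma u)_{-j}=\sum_{k\in\Theta}\sigma_{-j,k}u_k$ and using involutary linearity yields, in the case $j\in\Theta_{hb}$,
\[
(\widetilde{\sigma u})_j = \sum_{k\in\Theta}\bar u_k\,\bar\sigma_{-j,k}\,\lambda^{(\epsilon(j)+1)/2},
\]
and similarly for $j=0$ with $\mu$ in place of $\lambda^{(\epsilon(j)+1)/2}$. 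On the other hand, using that $\tilde u_i=\bar u_{-i}\lambda^{(\epsilon(i)+1)/2}$ for $i\in\Theta_{hb}$ and $\tilde u_0=\bar u_0\mu$, and then reindexing the hyperbolic part of the sum via $k=-i$, one gets
\[
(\tilde u\sigma^{-1})_j = \sum_{k\in\Theta_{hb}}\bar u_k\,\lambda^{(-\epsilon(k)+1)/2}\sigma'_{-k,j}+\bar u_0\mu\,\sigma'_{0j}.
\]

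The proof then comes down to matching coefficients of each $\bar u_k$. For $k\in\Theta_{hb}$ and $j\in\Theta_{hb}$ this is exactly the first identity of Lemma \ref{36}(i) applied with $i=-k$ (after moving $\lambda^{(-\epsilon(k)+1)/2}$ to the other side, using $\lambda^{-(\epsilon(-k)+1)/2}=\lambda^{(\epsilon(k)-1)/2}$); for $k=0$, $j\in\Theta_{hb}$ it is the second identity; for $k\in\Theta_{hb}$, $j=0$ it is the third identity (again with $i=-k$); and for $k=j=0$ it is the fourth identity $\mu\sigma'_{00}=\bar\sigma_{00}\mu$.

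Since none of the steps are more than routine bookkeeping, there is no serious obstacle; the only care needed is to keep the exponents $(\epsilon(\cdot)\pm 1)/2$ consistent under the substitution $i\mapsto -k$, which flips the sign of $\epsilon$ and thus interchanges the two possible exponents of $\lambda$. For this reason I would expect, exactly as in the orthogonal case, that the published proof simply cites Lemma \ref{36}.
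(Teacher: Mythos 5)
Your proposal is correct and matches the paper exactly: the published proof is the one-line ``Follows from Lemma \ref{36}'', and your componentwise verification (matching the coefficient of each $\bar u_k$ against the four identities of Lemma \ref{36}(i) under the substitution $i\mapsto -k$) is precisely the computation that citation stands for. The exponent bookkeeping $\lambda^{-(\epsilon(-k)+1)/2}=\lambda^{(\epsilon(k)-1)/2}$ is handled correctly.
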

\begin{proof}
Follows from Lemma \ref{36}.
\end{proof}
\subsection{The elementary subgroup}
We introduce the following notation. In Definition \ref{27} we defined an $R^{\bullet}$-module structure on $\h$. Let $(R,~\b{}~,\b{$\lambda$},\b{$\mu $})$ be the Hermitian ring defined in Remark \ref{25}(b). Let $\underline{\h}$ denote the Heisenberg group corresponding to $(R,~\b{}~,\b{$\lambda$},\b{$\mu $})$. The underlying set of both $\h$ and $\underline{\h}$ is $R\times R$. If we replace the Hermitian ring $(R,~\bar~,\lambda,\mu)$ by the Hermitian ring $(R,~\b{}~,\b{$\lambda$},\b{$\mu $})$, then we get another $R^{\bullet}$-module structure on $R\times R$. We denote the group operation (resp. scalar multiplication) defined by $(R,~\bar{}~,\lambda,\mu )$ on $R\times R$ by $\+_1$ (resp. $\circ_1$) and the group operation (resp. scalar multiplication) defined by $(R,~\b{}~,\b{$\lambda$},\b{$\mu $})$ on $R\times R$ by $\+_{-1}$ (resp. $\circ_{-1}$). Further we set $\Delta^1:=\Delta$ and $\Delta^{-1}:=\{(x,y)\in R\times R\mid(x,\bar y)\in \Delta\}$. One checks easily that $((R,~\b{}~,\b{$\lambda$},\b{$\mu $}),\Delta^{-1})$ is a Hermitian form ring. Analogously, if $(I,\Omega)$ is an odd form ideal of $(R,\Delta)$, we set $\Omega^1:=\Omega$ and $\Omega^{-1}:=\{(x,y)\in R\times R\mid(x,\bar y)\in \Omega\}$. One checks easily that $(I,\Omega^{-1})$ is an odd form ideal of $(R,\Delta^{-1})$. 
 
If $i,j\in \Theta$, let $e^{ij}$ denote the matrix in $M_{2n+1}(R)$ with $1$ in the $(i,j)$-th position and $0$ in all other positions.
\begin{definition}
If $i,j\in \Theta_{hb}$, $i\neq\pm j$ and $x\in R$, the element  \[T_{ij}(x):=e+xe^{ij}-\lambda^{(\epsilon(j)-1)/2}\bar x\lambda^{(1-\epsilon(i))/2}e^{-j,-i}\] of $U_{2n+1}(R,\Delta)$ is called an {\it (elementary) short root matrix}. 
If $i\in \Theta_{hb}$ and $(x,y)\in \Delta^{-\epsilon(i)}$, the element \[T_{i}(x,y):=e+xe^{0,-i}-\lambda^{-(1+\epsilon(i))/2}\bar x\mu e^{i0}+ye^{i,-i}\] of $U_{2n+1}(R,\Delta)$ is called an {\it (elementary) extra short root matrix}. The extra short root matrices of the kind \[T_{i}(0,y)=e+ye^{i,-i}\] are called {\it (elementary) long root matrices}. If an element of $U_{2n+1}(R,\Delta)$ is a short or extra short root matrix, then it is called {\it elementary matrix}. The subgroup of $U_{2n+1}(R,\Delta)$ generated by all elementary matrices is called the {\it elementary subgroup} and is denoted by $EU_{2n+1}(R,\Delta)$. 
\end{definition}
\begin{lemma}\label{39}
The following relations hold for elementary matrices.
\begin{align*}
&T_{ij}(x)=T_{-j,-i}(-\lambda^{(\epsilon(j)-1)/2}\bar x\lambda^{(1-\epsilon(i))/2}), \tag{S1}\\
&T_{ij}(x)T_{ij}(y)=T_{ij}(x+y), \tag{S2}\\
&[T_{ij}(x),T_{kl}(y)]=e \text{ if } k\neq j,-i \text{ and } l\neq i,-j, \tag{S3}\\
&[T_{ij}(x),T_{jk}(y)]=T_{ik}(xy) \text{ if } i\neq\pm k, \tag{S4}\\
&[T_{ij}(x),T_{j,-i}(y)]=T_{i}(0,xy-\lambda^{(-1-\epsilon(i))/2}\bar y\bar x\lambda^{(1-\epsilon(i))/2}), \tag{S5}\\
&T_{i}(x_1,y_1)T_{i}(x_2,y_2)=T_{i}((x_1,y_1)\+_{-\epsilon(i)}(x_2,y_2)), \tag{E1}\\
&[T_{i}(x_1,y_1),T_{j}(x_2,y_2)]=T_{i,-j}(-\lambda^{-(1+\epsilon(i))/2}\bar x_1\mu x_2) \text{ if } i\neq\pm j, \tag{E2}\\
&[T_{i}(x_1,y_1),T_{i}(x_2,y_2)]=T_{i}(0,-\lambda^{-(1+\epsilon(i))/2}(\bar x_1\mu x_2-\bar x_2\mu x_1)), \tag{E3}\\
&[T_{ij}(x),T_{k}(y,z)]=e \text{ if } k\neq j,-i \text{ and} \tag{SE1}\\
&[T_{ij}(x),T_{j}(y,z)]=T_{j,-i}(z\lambda^{(\epsilon(j)-1)/2}\bar x\lambda^{(1-\epsilon(i))/2})T_{i}(y\lambda^{(\epsilon(j)-1)/2}\bar x\lambda^{(1-\epsilon(i))/2},xz\lambda^{(\epsilon(j)-1)/2}\bar x\lambda^{(1-\epsilon(i))/2}).\tag{SE2}\\
\end{align*}
\end{lemma}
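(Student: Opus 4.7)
The plan is to verify each of the ten relations (S1)--(SE2) by direct matrix computation, following exactly the pattern of the analogous orthogonal Lemma~\ref{10}, but now carrying along the scalar factors $\lambda$, $\mu$ and the involution $~\bar{}~$. The main tool throughout is the matrix-unit identity $e^{ij}e^{kl}=\delta_{jk}e^{il}$, together with the Hermitian ring identities $\bar\lambda=\lambda^{-1}$ and $\mu=\bar\mu\lambda$ recorded in Remark~\ref{25}, as well as Lemma~\ref{36} to recognize membership in $U_{2n+1}(R,\Delta)$.

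First I would dispose of the short-root relations (S1)--(S5). For (S1), one substitutes $(-j,-i)$ in place of $(i,j)$ in the definition of $T_{ij}$ and checks that the $\lambda$-factors rearrange to yield the same matrix using $\bar{\bar x}=\lambda x\bar\lambda$. For (S2)--(S4) one expands the products using the definition of $T_{ij}(x)$ and applies the matrix-unit identity to see which cross terms survive: the only nontrivial cross terms occur precisely when the internal indices match. For (S5) one computes the commutator directly; the only nonzero off-diagonal contributions land in positions $(0,-i)$, $(i,0)$, and $(i,-i)$, and they combine to give a long root matrix $T_i(0,z)$ with the claimed $z$. One then verifies that $z$ lies in $\Delta^{-\epsilon(i)}$; this is automatic because $z$ is visibly of the form $w-\bar w\lambda^{\pm 1}$, hence already in $\Delta_{\min}^{\pm 1}$.

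Next I would handle the extra-short-root relations (E1)--(E3) by expanding $T_i(x_1,y_1)T_i(x_2,y_2)$ and $[T_i(x_1,y_1),T_j(x_2,y_2)]$ using the definition of $T_i(x,y)$. The Heisenberg law $\+_{-\epsilon(i)}$ in (E1) arises naturally: the cross product of the $e^{i0}$ term from one factor with the $e^{0,-i}$ term from the other produces exactly the correction $-\lambda^{-(1+\epsilon(i))/2}\bar x_1\mu x_2$ in the $(i,-i)$ slot, matching the Heisenberg group law of Definition~\ref{27}. Relation (E2) follows because the only surviving cross term when $i\neq\pm j$ comes from the product $e^{i0}\cdot e^{0,-j}$. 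Finally (SE1) and (SE2) are computed the same way, expanding $[T_{ij}(x),T_j(y,z)]$ and collecting off-diagonal contributions in positions $(j,-i)$, $(0,-i)$, $(i,0)$, and $(i,-i)$.

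The only real obstacle is careful bookkeeping of the scalar factors $\lambda^{(\epsilon(i)\pm 1)/2}$ and $\mu$ imposed by the symmetry rules of Lemma~\ref{36}, and, in (S5), (E1), (E3), and (SE2), verifying that the Heisenberg coordinates produced on the right-hand side indeed lie in the correct odd form parameter $\Delta^{\pm 1}$. In each case this is automatic: the pair is either of the form $(0,w-\bar w\lambda^{\pm 1})\in\Delta_{\min}^{\pm 1}$ or is obtained from the pre-existing Heisenberg group operation on elements already assumed to lie in $\Delta^{\pm 1}$, so no further checking is needed.
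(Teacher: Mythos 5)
Your proposal is correct and takes exactly the route the paper does: the paper's entire proof of Lemma~\ref{39} is the single line ``Straightforward computation,'' and your plan is simply a careful elaboration of that direct matrix-unit expansion, including the (genuinely necessary but routine) check that the resulting Heisenberg coordinates land in the right $\Delta^{\pm 1}$. Nothing further is needed.
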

\begin{proof}
Straightforward computation.
\end{proof}
\begin{definition}
Let $u\in M$ be such that such that $u_{-1}=0$ and $u$ is {\it isotropic}, i.e. $q(u)\in\Delta$. Then we denote the matrix
\begin{align*}
&\left(\begin{array}{cccc|c|cccc}1&-\bar\lambda\bar u_{-2}\lambda&\dots&-\bar\lambda\bar u_{-n}\lambda&-\bar\lambda\bar u_0\mu&-\bar\lambda\bar u_n&\dots&-\bar\lambda\bar u_2&u_1-\bar\lambda\bar u_1\\&1&&&&&&&u_2\\&&\ddots&&&&&&\vdots\\&&&1&&&&&u_n\\\hline &&&&1&&&&u_0\\\hline&&&&&1&&&u_{-n}\\&&&&&&\ddots&&\vdots\\&&&&&&&1&u_{-2}\\&&&&&&&&1\end{array}\right)\\
=&e+ue^t_{-1}- e_1\bar\lambda\tilde u=T_{1}(q_1(u),\bar\lambda(q_2(u)-\bar u_1+\lambda u_1))\prod\limits_{\substack{i=2,\\i\neq 0}}^{-2}T_{i,-1}(u_i)\in EU_{2n+1}(R,\Delta)
\end{align*}
by $T_{*,-1}(u)$. Clearly $T_{*,-1}(u)^{-1}=T_{*,-1}(-u)$ (note that $\tilde uu=tr(q(u))=0$ since $u$ is isotropic) and
\begin{equation}
^{\sigma}T_{*,-1}(u)=e+\sigma u\sigma'_{-1,*}- \sigma_{*1} \bar\lambda\tilde u\sigma^{-1}=e+\sigma u\widetilde{\sigma_{*1}}- \sigma_{*1}\bar\lambda\widetilde{\sigma u}\label{e3}
\end{equation} 
for any $\sigma\in U_{2n+1}(R,\Delta)$, the last equality by Lemma \ref{38}.
\end{definition}
\begin{definition}\label{42}
Let $i,j\in\Theta_{hb}$ such that $i\neq\pm j$. Define
\begin{align*}
P_{ij}:=&e-e^{ii}-e^{jj}-e^{-i,-i}-e^{-j,-j}+e^{ij}-e^{ji}+\lambda^{(\epsilon(i)-\epsilon(j))/2}e^{-i,-j}-\lambda^{(\epsilon(j)-\epsilon(i))/2}e^{-j,-i}\\
=&T_{ij}(1)T_{ji}(-1)T_{ij}(1)\in EU_{2n+1}(R,\Delta).                                                                                                                                                                                                                                                                                                                                                                                                                                                                                             
\end{align*}
It is easy to show that $(P_{ij})^{-1}=P_{ji}$. 
\end{definition}
\begin{lemma}\label{43}
Let $i,j,k\in\Theta_{hb}$ such that $i\neq \pm j$ and $k\neq \pm i,\pm j$. Let $x\in R$ and $(y,z)\in\Delta^{-\epsilon(i)}$. Then 
\begin{enumerate}[(i)]
\item $^{P_{ki}}T_{ij}(x)=T_{kj}(x)$,
\item $^{P_{kj}}T_{ij}(x)=T_{ik}(x)$ and
\item $^{P_{-k,-i}}T_{i}(y,z)=T_{k}(y,\lambda^{(\epsilon(i)-\epsilon(k))/2}z)$.
\end{enumerate}
\end{lemma}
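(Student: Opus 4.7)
The plan is to prove all three parts by a direct matrix computation using the explicit form of $P_{\alpha\beta}$ given in Definition \ref{42}. Since $(P_{\alpha\beta})^{-1}=P_{\beta\alpha}$, each conjugation $^{P_{\alpha\beta}}T$ is simply the triple product $P_{\alpha\beta}\,T\,P_{\beta\alpha}$. The crucial observation is that $P_{\alpha\beta}-e$ is supported only on rows and columns with indices in $\{\alpha,\beta,-\alpha,-\beta\}$: on the standard basis it acts by $e_\beta\mapsto e_\alpha$, $e_\alpha\mapsto-e_\beta$, $e_{-\beta}\mapsto e_{-\alpha}\lambda^{(\epsilon(\alpha)-\epsilon(\beta))/2}$ and $e_{-\alpha}\mapsto-e_{-\beta}\lambda^{(\epsilon(\beta)-\epsilon(\alpha))/2}$, and is the identity elsewhere. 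Consequently, conjugating by $P_{\alpha\beta}$ simply relabels the indices in $\{\alpha,\beta,-\alpha,-\beta\}$ in any matrix $T$, with $\lambda$-corrections appearing only at the negative indices.

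For (i) and (ii), the hypotheses $i\neq\pm j$ and $k\neq\pm i,\pm j$ guarantee that the column indices $j,-j$ (in case (i)) or the row indices $i,-i$ (in case (ii)) are untouched by the Weyl element, so the only effect of the conjugation is the intended swap $i\leftrightarrow k$ (resp.\ $j\leftrightarrow k$) together with $-i\leftrightarrow-k$ (resp.\ $-j\leftrightarrow-k$). The $\lambda$-corrections on the negative swaps appear on both sides of $T_{ij}(x)$, but they combine cleanly: on the $(i,j)$ entry there is no correction, while on the $(-j,-i)$ entry the two corrections from $P_{ki}$ on the left and $P_{ik}$ on the right partially cancel and partially convert the exponent $(1-\epsilon(i))/2$ in the coefficient of $e^{-j,-i}$ into $(1-\epsilon(k))/2$ at the new position $(-j,-k)$. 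This is exactly the coefficient of $e^{-j,-k}$ in $T_{kj}(x)$ according to the definition of an elementary short root matrix, and an analogous computation yields (ii).

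For (iii), the same method is applied to $T_i(y,z)=e+ye^{0,-i}-\lambda^{-(1+\epsilon(i))/2}\bar y\mu e^{i0}+ze^{i,-i}$. The indices $\{-k,-i,k,i\}$ acted on by $P_{-k,-i}$ now include both the row $i$ and the column $-i$ in which non-identity entries occur. The entries at $(0,-i)$ and $(i,0)$ each receive a single $\lambda$-correction that is absorbed into the index relabeling and produces the coefficient $y$ at $(0,-k)$ and $-\lambda^{-(1+\epsilon(k))/2}\bar y\mu$ at $(k,0)$. The coefficient $z$ at $(i,-i)$, however, sits at the intersection of two corrected indices, and one of the two compensating $\lambda$-factors survives, yielding precisely $\lambda^{(\epsilon(i)-\epsilon(k))/2}z$ at position $(k,-k)$.

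The main obstacle is the careful bookkeeping of the $\lambda$-exponents in (iii), and in particular verifying that $(y,\lambda^{(\epsilon(i)-\epsilon(k))/2}z)\in\Delta^{-\epsilon(k)}$, so that the right-hand side is a legal extra short root matrix. When $\epsilon(i)=\epsilon(k)$ the twist is trivial and the assertion is immediate from $(y,z)\in\Delta^{-\epsilon(i)}=\Delta^{-\epsilon(k)}$; when $\epsilon(i)\neq\epsilon(k)$, one passes between $\Delta^1$ and $\Delta^{-1}$ via the involution, and the $\lambda$-twist is exactly what the compatibility in Remark \ref{25}(b) together with $\bar\lambda=\lambda^{-1}$ demands.
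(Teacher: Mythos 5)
Your proposal is correct and matches the paper's approach: the paper proves Lemma \ref{43} by exactly this kind of direct computation (its proof is simply ``Straightforward''), conjugating the explicit matrices of Definition \ref{42} and tracking the $\lambda$-exponents, and your exponent bookkeeping in all three parts, including the residual twist $\lambda^{(\epsilon(i)-\epsilon(k))/2}$ on the $(k,-k)$ entry and the membership $(y,\lambda^{(\epsilon(i)-\epsilon(k))/2}z)\in\Delta^{-\epsilon(k)}$, comes out right.
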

\begin{proof}
Straightforward.
\end{proof}
\begin{lemma}\label{new}
Let $\sigma\in U_{2n+1}(R,\Delta)$ and $i,j\in \Theta_{hb}$ such that $i\neq \pm j$. Set $\hat\sigma:={}^{P_{ij}}\sigma$. Then
\[q(\hat\sigma_{*i})=
\begin{cases}
q(\sigma_{*j}), \text{ if } \epsilon(i)=\epsilon(j),\\
q(\sigma_{*j})\+(0,-\bar{\sigma}_{ij}\sigma_{-i,j}+\overline{\bar{\sigma}_{ij}\sigma_{-i,j}}\lambda-\bar{\sigma}_{-j,j}\sigma_{jj}+\overline{\bar{\sigma}_{-j,j}\sigma_{jj}}\lambda,) \text{ if } \epsilon(i)=1,\epsilon(j)=-1,\\
q(\sigma_{*j})\+(0,-\bar{\sigma}_{-i,j}\sigma_{ij}+\overline{\bar{\sigma}_{-i,j}\sigma_{ij}}\lambda-\bar{\sigma}_{jj}\sigma_{-j,j}+\overline{\bar{\sigma}_{jj}\sigma_{-j,j}}\lambda), \text{ if } \epsilon(i)=-1, \epsilon(j)=1.
\end{cases}
\]
\end{lemma}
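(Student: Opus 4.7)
The plan is a direct matrix computation. Since $P_{ji}=P_{ij}^{-1}$, one has $\hat\sigma_{*i}=P_{ij}\sigma P_{ji}e_i$. Reading off column $i$ of $P_{ji}$ from Definition \ref{42} gives $P_{ji}e_i=e_j$, so $\hat\sigma_{*i}=P_{ij}\sigma_{*j}$. Left multiplication by $P_{ij}$ leaves all coordinates outside $\{\pm i,\pm j\}$ unchanged and gives
\begin{align*}
\hat\sigma_{ii}&=\sigma_{jj}, &\hat\sigma_{ji}&=-\sigma_{ij},\\
\hat\sigma_{-i,i}&=\lambda^{(\epsilon(i)-\epsilon(j))/2}\sigma_{-j,j}, &\hat\sigma_{-j,i}&=-\lambda^{(\epsilon(j)-\epsilon(i))/2}\sigma_{-i,j}.
\end{align*}
In particular $\hat\sigma_{0i}=\sigma_{0j}$, so $q_1(\hat\sigma_{*i})=q_1(\sigma_{*j})$ in all three cases and only the second coordinate of $q$ requires analysis.

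Next I would compute $q_2(\hat\sigma_{*i})=\sum_{k=1}^{n}\bar{\hat\sigma}_{ki}\hat\sigma_{-k,i}$ and compare it to $q_2(\sigma_{*j})=\sum_{k=1}^{n}\bar\sigma_{kj}\sigma_{-k,j}$ term by term. A summand differs from its $\sigma$-counterpart only when $k$ or $-k$ belongs to $\{\pm i,\pm j\}$, and which $k\in\Theta_+$ these are is determined by the signs $\epsilon(i),\epsilon(j)$; this is what drives the three-case split. When $\epsilon(i)=\epsilon(j)$, the exponents $(\epsilon(i)-\epsilon(j))/2$ vanish, the affected indices fall into swapped pairs, and the two minus signs from $\hat\sigma_{ji}$ and $\hat\sigma_{-j,i}$ multiply to $+1$; the two sums agree term by term and $q(\hat\sigma_{*i})=q(\sigma_{*j})$ as claimed. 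When $\epsilon(i)\neq\epsilon(j)$ the $\lambda$-twists survive and four unmatched summands remain.

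To cast them in the stated form, I would combine $\bar\lambda=\lambda^{-1}$ (Remark \ref{25}(a)) with $\bar{\bar x}=\lambda x\bar\lambda$ to obtain the identity $\overline{\bar x y}\,\lambda=\bar y\lambda x$. Applying it to each twisted product rewrites e.g. $\bar\sigma_{jj}\lambda\sigma_{-j,j}$ as $\overline{\bar\sigma_{-j,j}\sigma_{jj}}\,\lambda$ in the case $\epsilon(i)=1,\,\epsilon(j)=-1$, and similarly for the companion term. Collecting everything and using the identity $(x,y)\+(0,z)=(x,y+z)$ in $\h$ produces the second formula of the lemma; the remaining case is symmetric. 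The main obstacle is purely bookkeeping: tracking which of $\pm i,\pm j$ lie in $\Theta_+$, the signs produced by $P_{ij}$, and the collapse of $\lambda$-exponents via $\bar\lambda=\lambda^{-1}$. No structural input beyond the defining relations of a Hermitian ring and the explicit form of $P_{ij}$ is needed.
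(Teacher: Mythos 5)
Your proposal is correct and matches the paper's approach: the paper's proof of Lemma \ref{new} is just ``Straightforward computation,'' and your direct computation of $\hat\sigma_{*i}=P_{ij}\sigma_{*j}$ followed by a term-by-term comparison of $q_2$, using $\overline{\bar xy}\lambda=\bar y\lambda x$, is exactly that computation carried out correctly.
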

\begin{proof}
Straightforward computation.
\end{proof}
\subsection{Relative elementary subgroups}
\begin{definition}
Let $(I,\Omega)$ denote an odd form ideal of $(R,\Delta)$. A short root matrix $T_{ij}(x)$ is called {\it $(I,\Omega)$-elementary} if $x\in I$. An extra short root matrix $T_{i}(x,y)$ is called {\it $(I,\Omega)$-elementary} if $(x,y)\in \Omega^{-\epsilon(i)}$. If an element of $U_{2n+1}(R,\Delta)$ is an $(I,\Omega)$-elementary short or extra short root matrix, then it is called {\it $(I,\Omega)$-elementary matrix}. The subgroup $EU_{2n+1}(I,\Omega)$ of $EU_{2n+1}(R,\Delta)$ generated by the $(I,\Omega)$-elementary matrices is called the {\it preelementary subgroup of level $(I,\Omega)$}. Its normal closure $EU_{2n+1}((R,\Delta),(I,\Omega))$ in $EU_{2n+1}(R,\Delta)$ is called the {\it elementary subgroup of level $(I,\Omega)$}.
\end{definition}
\subsection{Congruence subgroups}
In this subsection $(I,\Omega)$ denotes an odd form ideal of $(R,\Delta)$. If $\sigma\in M_{2n+1}(R)$, we call the matrix $(\sigma_{ij})_{i,j\in\Theta_{hb}}\in M_{2n}(R)$ the {\it hyperbolic part of $\sigma$} and denote it by $\sigma_{hb}$. Further we define the submodule $M(R,\Delta):=\{u\in M\mid u_0\in J(\Delta)\}$ of $M$. 
\begin{definition}
The subgroup $U_{2n+1}((R,\Delta),(I,\Omega)):=$
\[\{\sigma\in U_{2n+1}(R,\Delta)\mid\sigma_{hb}\equiv e_{hb}\bmod  I\text{ and }q(\sigma u)\equiv q(u)\bmod \Omega~\forall u\in M(R,\Delta)\}\]
of $U_{2n+1}(R,\Delta)$ is called {\it the principal congruence subgroup of level $(I,\Omega)$}.
\end{definition}
\begin{lemma}\label{46}
Let $\sigma\in U_{2n+1}(R,\Delta)$. Then $\sigma\in U_{2n+1}((R,\Delta),(I,\Omega))$ iff the conditions (i) and (ii) below hold. 
\begin{enumerate}[(i)]
\item $\sigma_{hb}\equiv e_{hb}\bmod  I$.
\item $q(\sigma_{*j})\in\Omega~\forall j\in \Theta_{hb}$ and $(q(\sigma_{*0})\minus (1,0))\circ a\in\Omega~\forall a\in J(\Delta)$.
\end{enumerate}
\end{lemma}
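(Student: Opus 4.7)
The statement is an equivalence; I would verify the two implications separately.

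For the forward direction, condition (i) is built into the definition of $U_{2n+1}((R,\Delta),(I,\Omega))$, so nothing needs to be shown. To obtain (ii), the plan is to specialize the universal hypothesis $q(\sigma u)\equiv q(u)\bmod \Omega$ to two families of test vectors. First, for each $j\in\Theta_{hb}$, the vector $e_j$ lies in $M(R,\Delta)$ since $(e_j)_0=0\in J(\Delta)$; a direct evaluation gives $q(e_j)=(0,0)$ and $q(\sigma e_j)=q(\sigma_{*j})$, yielding $q(\sigma_{*j})\in\Omega$. Second, for $a\in J(\Delta)$, the vector $e_0 a$ lies in $M(R,\Delta)$; since $q(e_0)=(1,0)$ and $q(\sigma(e_0 a))=q(\sigma_{*0})\circ a$, and since $\circ$ distributes over $\+$ and hence (as a formal consequence) over $\minus$, the second clause of (ii) drops out.

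For the backward direction, I would take an arbitrary $u\in M(R,\Delta)$ and decompose $u=\sum_{j\in\Theta}e_j u_j$ with $u_0\in J(\Delta)$, so that $\sigma u=\sum_{j\in\Theta}\sigma_{*j} u_j$. The next step is to apply the identity
\begin{equation*}
q(x+y)\equiv q(x)\+q(y)\+(0,b(x,y))\bmod \Delta_{min},
\end{equation*}
together with $q(wr)=q(w)\circ r$, iteratively to both expansions. Because $\sigma$ preserves the $\lambda$-Hermitian form, one has $b(\sigma_{*j},\sigma_{*k})=b(e_j,e_k)$, so the central cross-term contributions $(0,\bar u_j b(\cdot,\cdot)u_k)$ coincide on the two sides. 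Fixing an order in which $j=0$ comes last, $q(\sigma u)\minus q(u)$ therefore reduces, modulo $\Delta_{min}$, to
\begin{equation*}
\bigl(\plus_{j\in\Theta_{hb}}q(\sigma_{*j})\circ u_j\bigr)\+\bigl(q(\sigma_{*0})\circ u_0\minus(u_0,0)\bigr).
\end{equation*}
The first factor lies in $\Omega$ by the first clause of (ii); the second equals $(q(\sigma_{*0})\minus(1,0))\circ u_0$, which lies in $\Omega$ by the second clause of (ii) since $u_0\in J(\Delta)$.

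The main obstacle will be to confirm that the residual $\Delta_{min}$-garbage produced by the iterated identity also lies in $\Omega$, since $\Delta_{min}$ is not in general contained in $\Omega$. The key observation is that the garbage is bilinear in the components of the summands, so the \emph{difference} between the garbage on the $\sigma u$-side and that on the $u$-side is controlled by the entries $\sigma_{ij}-\delta_{ij}$; by condition (i), these differences lie in $I$ for $i,j\in\Theta_{hb}$, while the cases in which a $0$-index appears are handled by $u_0\in J(\Delta)$ together with the defining relation $\overline{J(\Delta)}\mu\tilde I\subseteq I$. Consequently, the net discrepancy lands in $\Omega^I_{min}\subseteq\Omega$, and Lemma \ref{last} supplies exactly the explicit $\Delta_{min}$-correction needed to make the bookkeeping mechanical.
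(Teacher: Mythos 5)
The paper does not actually prove Lemma \ref{46}; it cites \cite[Lemma 28]{bak-preusser}, so there is no internal argument to compare against. On its own terms, your forward direction is complete and correct: specializing $q(\sigma u)\equiv q(u)\bmod\Omega$ to $u=e_j$ and $u=e_0a$, and using that $\circ$ distributes over $\+$ (hence over $\minus$), gives exactly condition (ii). The skeleton of your backward direction --- expand both $q(\sigma u)$ and $q(u)$ column by column, cancel the $b$-cross-terms using invariance of the form, and isolate $\bigl(\plus_{j}q(\sigma_{*j})\circ u_j\bigr)\+\bigl(q(\sigma_{*0})\minus(1,0)\bigr)\circ u_0$ --- is also the right one.

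The gap is in your final paragraph. The exact addition formula reads $q(v+w)=q(v)\+q(w)\+(0,b(v,w))\+(0,r-\bar r\lambda)$ with $r=\sum_{i>0}\bar w_i v_{-i}$, so the residual term produced by the pair (column $0$, column $j$) on the $\sigma u$-side is $\bar u_0\sum_{i>0}\bar\sigma_{i0}\sigma_{-i,j}u_j$, whose leading part (after writing $\sigma_{-i,j}=\delta_{-i,j}+(\text{element of }I)$) is $\bar u_0\,\bar\sigma_{i0}\,u_{-i}$. To put this into $\Omega^I_{min}$ you need $\overline{J(\Delta)}\,\bar\sigma_{i0}\subseteq I$, i.e.\ $\sigma_{i0}\in I_0$ in the notation of Remark \ref{47} --- a condition on the entries of the \emph{zeroth column} of $\sigma$ restricted to hyperbolic rows. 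Neither condition (i) (which only governs $\sigma_{hb}$) nor condition (ii) (whose first components give $\sigma_{0j}\in\tilde I$, i.e.\ control the \emph{zeroth row}, and whose $q_2$-part is quadratic in the $\sigma_{i0}$) yields this directly, and the relation $\overline{J(\Delta)}\mu\tilde I\subseteq I$ that you invoke applies to the row entries, not the column entries. One has to bridge the two via Lemma \ref{36}, which expresses $\sigma'_{i0}$ in terms of $\bar\sigma_{0,-i}\mu$, combined with $\sigma^{-1}\sigma=e$ and condition (i) to transfer the information from $\sigma^{-1}$ back to $\sigma$; this is a genuine additional step, not mechanical bookkeeping, and without it the backward implication is not closed. (A smaller point of the same flavour: rearranging the $\+$-products costs commutators $(0,\bar x_2\mu x_1-\overline{\bar x_2\mu x_1}\lambda)$ in $\Delta_{min}$; in your setting the relevant first components are $\sigma_{0j}u_j$ and $u_0$, so these \emph{are} absorbed by $\overline{J(\Delta)}\mu\tilde I\subseteq I$, but this should be said.)
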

\begin{proof}
See \cite[Lemma 28]{bak-preusser}.
\end{proof} 
\begin{remark}\label{47}
Let $\sigma\in U_{2n+1}(R,\Delta)$. Define $I_0:=\{x\in R\mid xJ(\Delta)\subseteq I\}$ and
$\tilde I_0:=\{x\in R\mid\overline{J(\Delta)}\mu x\in I_0\}$. Then $I_0$ is a left ideal of $R$ and $\tilde I_0$ an additive subgroup of $R$. If $J(\Delta)=R$ then $I_0=I$ and $\tilde I_0= \tilde I$. Lemma \ref{46} implies that $\sigma\in U_{2n+1}((R,\Delta),(I,\Omega^I_{max}))$ iff $\sigma\equiv e\bmod I,\tilde I,I_0,\tilde I_0 $, meaning that $\sigma_{hb}\equiv e_{hb}\bmod I$, $\sigma_{0j}\in\tilde I$ for any $j\in \Theta_{hb}$, $\sigma_{i0}\in I_0$ for any $i\in\Theta_{hb}$ and $\sigma_{00}-1\in\tilde I_0$.
\end{remark}
\begin{definition}
The subgroup $NU_{2n+1}((R,\Delta),(I,\Omega)):=$
\[ Normalizer_{U_{2n+1}(R,\Delta)}(U_{2n+1}((R,\Delta),(I,\Omega)))\]
of $U_{2n+1}(R,\Delta)$ is called the {\it normalized principal congruence subgroup of level $(I,\Omega)$}.
\end{definition}
\begin{remark}\label{imprem}
~\\
\vspace{-0.6cm}
\begin{enumerate}[(a)]
\item In many interesting situations, $NU_{2n+1}((R,\Delta),(I,\Omega))$ equals $U_{2n+1}(R,\Delta)$, for example the equality holds if $\Omega=\Omega^I_{min}$ or $\Omega=\Omega^I_{max}$. But it can also happen that $NU_{2n+1}((R,\Delta),(I,\Omega))\neq U_{2n+1}(R,\Delta)$, see \cite[Example 38]{bak-preusser}.
\item $EU_{2n+1}(R,\Delta)\subseteq NU_{2n+1}((R,\Delta),(I,\Omega))$, see \cite[Corollary 36]{bak-preusser}.
\end{enumerate}
\end{remark}
\begin{definition}
The subgroup $CU_{2n+1}((R,\Delta),(I,\Omega)):=$
\[ \{\sigma\in NU_{2n+1}((R,\Delta),(I,\Omega))\mid [\sigma,EU_{2n+1}(R,\Delta)]\subseteq U_{2n+1}((R,\Delta),(I,\Omega))\}\]
of $U_{2n+1}(R,\Delta)$ is called the {\it full congruence subgroup of level $(I,\Omega)$}.
\end{definition}
\begin{lemma}\label{50}
Let $\sigma\in U_{2n+1}(R,\Delta)$. Then $\sigma\in CU_{2n+1}((R,\Delta),(I,\Omega^I_{max}))$ iff
\begin{enumerate}[(i)]
\item $\sigma_{ij}\in I$ for any $i,j\in \Theta_{hb}$ such that $i\neq j$,
\item $\sigma_{i0}\in I_0$ for any $i \in \Theta_{hb}$,
\item $\sigma_{0j}\in \tilde I$ for any $j\in \Theta_{hb}$,
\item $\sigma_{ii}-\sigma_{jj}\in I$ for any $i,j\in \Theta_{hb}$ and
\item $\sigma_{00}-\sigma_{jj}\in \tilde I_0$ for any $j \in\Theta_{hb}$.
\end{enumerate}
\end{lemma}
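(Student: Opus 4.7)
Since $\Omega=\Omega^I_{max}$, Remark \ref{imprem}(a) gives $NU_{2n+1}((R,\Delta),(I,\Omega^I_{max}))=U_{2n+1}(R,\Delta)$, so the defining condition for $CU_{2n+1}((R,\Delta),(I,\Omega^I_{max}))$ reduces to: for every generator $\xi$ of $EU_{2n+1}(R,\Delta)$ from Lemma \ref{39}, the commutator $[\sigma,\xi]$ lies in $U_{2n+1}((R,\Delta),(I,\Omega^I_{max}))$. By Remark \ref{47} this in turn is equivalent to $[\sigma,\xi]\equiv e\bmod I,\tilde I,I_0,\tilde I_0$, i.e.\ the hyperbolic block is $\equiv e_{hb}\bmod I$, the $(0,j)$-entries lie in $\tilde I$, the $(i,0)$-entries lie in $I_0$, and the $(0,0)$-entry differs from $1$ by an element of $\tilde I_0$. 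Both directions of the lemma are thus reduced to matching these four entry-wise congruences against the five conditions (i)--(v).

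For the direction ($\Leftarrow$), assume (i)--(v) and expand $[\sigma,\xi]=\sigma\xi\sigma^{-1}\xi^{-1}$ entrywise. For a short root generator $\xi=T_{kl}(x)$, each entry of the commutator is a short polynomial in entries of $\sigma$ and $\sigma^{-1}$; conditions (i) and (iv) together with Lemma \ref{36} (which converts entries of $\sigma^{-1}$ into entries of $\sigma$ via the involution) place the hyperbolic entries in $I$, while (ii), (iii) handle the entries in row $0$ and column $0$. For an extra short root generator $\xi=T_k(y,z)$ with $(y,z)\in\Delta^{-\epsilon(k)}$, the analogous expansion is multiplied by $y\in J(\Delta)$ or $z\in I$, and the defining inclusions $I_0\cdot J(\Delta)\subseteq I$, $\overline{J(\Delta)}\mu\tilde I\subseteq I$, $\overline{J(\Delta)}\mu\tilde I_0\subseteq I_0$ convert mixed products such as $\sigma_{i0}y$ and $\bar y\mu\sigma_{0j}$ into the correct sets, with (v) providing the $(0,0)$-entry control via $(\sigma_{00}-\sigma_{jj})y\in I_0$.

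For the direction ($\Rightarrow$), assume $\sigma\in CU_{2n+1}((R,\Delta),(I,\Omega^I_{max}))$ and extract each condition by specializing $\xi$. Conditions (i) and (iv) come from $[\sigma,T_{kl}(1)]$ for $k,l\in\Theta_{hb}$ chosen disjoint from $i,j$: a suitably chosen hyperbolic entry of the commutator isolates $\sigma_{ij}$ (resp.\ $\sigma_{ii}-\sigma_{jj}$) up to terms already known to lie in $I$, forcing membership in $I$. For (ii) and (iii), take $\xi=T_k(y,0)$ and let $y$ range over $J(\Delta)=J(\Delta^{-\epsilon(k)})$: the $(i,0)$-entry of the commutator has the form $\sigma_{i0}\,y$ modulo $I$, so $\sigma_{i0}J(\Delta)\subseteq I$ gives $\sigma_{i0}\in I_0$, and dually the $(0,j)$-entry forces $\bar y\mu\sigma_{0j}\in I$ for all $y\in J(\Delta)$, i.e.\ $\sigma_{0j}\in\tilde I$. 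Condition (v) is extracted from the $(0,0)$-entry of the same commutator, which modulo terms already controlled by (ii), (iii) reduces to $(\sigma_{00}-\sigma_{kk})y$; ranging over $y\in J(\Delta)$ yields $(\sigma_{00}-\sigma_{kk})J(\Delta)\subseteq I_0$, hence $\sigma_{00}-\sigma_{kk}\in\tilde I_0$.

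The main obstacle is not any single computation but the nonsymmetric combinatorial bookkeeping: the four target sets $I,\tilde I,I_0,\tilde I_0$ are not ideals, are not stable under the involution, and interact with each other only via the three defining inclusions built into Remark \ref{47}. Each individual commutator expansion is an elementary exercise once Lemma \ref{36} and Lemma \ref{39} are in hand; the care required is in consistently identifying, for every entry of $[\sigma,\xi]$, which of the four targets it must land in and which defining inclusion is used to put it there.
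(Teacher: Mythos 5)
The paper offers no proof of Lemma \ref{50} to compare against: it is stated without proof (its orthogonal analogue, Lemma \ref{19}, is dismissed as ``Straightforward''), and the intended argument is exactly the entry-chasing you describe. Your reduction is the right one: by Remark \ref{imprem}(a) the normalizer condition is vacuous for $\Omega=\Omega^I_{max}$, and by Remark \ref{47} membership of $[\sigma,\xi]$ in $U_{2n+1}((R,\Delta),(I,\Omega^I_{max}))$ is the four entry-wise congruences $\bmod\ I,\tilde I,I_0,\tilde I_0$. Note that reducing from $[\sigma,EU_{2n+1}(R,\Delta)]\subseteq U_{2n+1}((R,\Delta),(I,\Omega^I_{max}))$ to generators $\xi$ uses $[\sigma,\epsilon_1\epsilon_2]=[\sigma,\epsilon_1]\cdot{}^{\epsilon_1}[\sigma,\epsilon_2]$ and hence the fact that the principal congruence subgroup is normalized by $EU_{2n+1}(R,\Delta)$ (Remark \ref{imprem}(b)); you should cite this.

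Two steps of your plan would fail as literally written. First, in the direction ($\Rightarrow$) you cannot isolate $\sigma_{ij}$ from ``a suitably chosen hyperbolic entry'' of a single commutator: every entry of $\sigma T_{kl}(x)\sigma^{-1}-e=x\sigma_{*k}\sigma'_{l*}-x'\sigma_{*,-l}\sigma'_{-k,*}$ is a sum of two products of unknown entries of $\sigma$ and $\sigma^{-1}$, and in this direction there are no ``terms already known to lie in $I$''. The standard repair is to multiply the matrix congruence $\sigma T_{kl}(x)\sigma^{-1}\equiv T_{kl}(x)$ on the right by the column $\sigma_{*l}$, which kills the second rank-one summand and yields $x\sigma_{*k}\equiv x\sigma_{ll}e_k-x'\sigma_{-k,l}e_{-l}$; reading off coordinates (and varying $l$, using $n\geq 3$) gives (i), (iii), (iv) and, after the analogous manipulation with extra short root generators, (ii) and (v). Second, $T_k(y,0)$ is not an element of the group for arbitrary $y\in J(\Delta)$, since $(y,0)\in\Delta^{-\epsilon(k)}\subseteq\Delta_{max}^{-\epsilon(k)}$ forces $\bar y\mu y=0$; you must use $T_k(y,z)$ with $(y,z)\in\Delta^{-\epsilon(k)}$ (such $z$ exists because $J(\Delta^{-1})=J(\Delta)$) and control the extra contribution of $ze^{k,-k}$, e.g.\ by first treating the long root elements $T_k(0,z')$ separately. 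Finally, in the direction ($\Leftarrow$) the diagonal contributions such as $x(\sigma_{kk}\sigma'_{ll}-1)$ are not covered by (i) and (iv) alone; one also needs $1-\sigma_{ll}\sigma'_{ll}=\sum_{s\neq l}\sigma_{ls}\sigma'_{sl}\in I$, which uses $\sigma\sigma^{-1}=e$ together with (i), (ii), the fact that $\sigma'_{0l}\in J(\Delta)$ (Lemma \ref{36}(ii) applied to $\sigma^{-1}$) and the inclusion $I_0J(\Delta)\subseteq I$. These are all repairable with standard devices, but they are precisely the points where the ``elementary exercise'' is not automatic.
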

\begin{theorem}\label{51}
If $R$ is semilocal or quasifinite and $n\geq 3$, then the equalities
\begin{align*}
&[CU_{2n+1}((R,\Delta),(I,\Omega)),EU_{2n+1}(R,\Delta)]\\
=&[EU_{2n+1}((R,\Delta),(I,\Omega)),EU_{2n+1}(R,\Delta)]\\
=&EU_{2n+1}((R,\Delta),(I,\Omega))
\end{align*}
hold. 
\end{theorem}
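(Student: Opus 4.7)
The plan is to abbreviate $G := EU_{2n+1}(R,\Delta)$, $E := EU_{2n+1}((R,\Delta),(I,\Omega))$, and $C := CU_{2n+1}((R,\Delta),(I,\Omega))$, and then to establish the circular chain
\[
E \subseteq [E, G] \subseteq [C, G] \subseteq E,
\]
from which the three-way equality is immediate. The middle containment follows from $E \subseteq C$, which in turn uses Lemma \ref{46} to see that each $(I,\Omega)$-elementary matrix lies in the principal congruence subgroup $U_{2n+1}((R,\Delta),(I,\Omega))$, together with the normality of this principal congruence subgroup in $U_{2n+1}(R,\Delta)$ (the unitary analogue of the orthogonal normality theorem).

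For $E \subseteq [E, G]$, since $E$ is the normal closure in $G$ of the $(I,\Omega)$-elementary matrices, it is enough to realize every such matrix as a single commutator $[\alpha,\beta]$ with $\alpha \in E$ and $\beta \in G$. For a short root matrix $T_{ij}(x)$ with $x \in I$, the hypothesis $n \geq 3$ supplies an index $k \neq \pm i,\pm j$ and relation (S4) of Lemma \ref{39} gives $T_{ij}(x) = [T_{ik}(x), T_{kj}(1)]$ with $T_{ik}(x) \in E$. For an extra short root matrix $T_i(y,z)$ with $(y,z) \in \Omega^{-\epsilon(i)}$, one splits $(y,z)$ according to the generating decomposition of $\Omega^{I}_{min}$ as an $\+$-sum of elements $(0, x - \overline{x}\lambda)$ with $x \in I$ and of $\Delta$-elements acted on by elements of $I$, then expresses each summand as a commutator via (SE2), (E1), (E2) and (E3).

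The third inclusion $[C, G] \subseteq E$ is the principal content. Fix $\sigma \in C$; by Lemma \ref{pre} and standard commutator calculus it suffices to prove $[\sigma,\epsilon] \in E$ for $\epsilon$ ranging over elementary generators. One writes $[\sigma,\epsilon] = ({}^{\sigma}\epsilon) \cdot \epsilon^{-1}$, computes ${}^{\sigma}\epsilon$ as an explicit rank-one (or rank-two, in the extra short case) deformation of $\epsilon$, and factors the result into an ordered product of root matrices. By Lemma \ref{50}, the parameters of the short root factors that appear live in $I$, $\tilde I$, $I_0$ or $\tilde I_0$, and the diagonal differences lie in $I$ or $\tilde I_0$; this places every short root factor inside $E$. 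The subtle point is that extra short root factors must have second coordinate in $\Omega^{-\epsilon(k)}$ and not merely in $\Delta^{-\epsilon(k)}$; Lemma \ref{last} secures this, since the discrepancy between $(a,b) \circ \sum_i x_i$ and the $\+$-sum of the individual $(a,b) \circ x_i$ is of the form $(0, y - \overline{y}\lambda)$ with $y \in I$, hence an element of $\Omega^{I}_{min} \subseteq \Omega$.

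The decisive obstacle is performing this factorization of ${}^{\sigma}\epsilon \cdot \epsilon^{-1}$ in sufficient generality: the entries of $\sigma$ satisfy no unimodularity condition, so columns cannot be cleaned by hand. This is where the hypothesis that $R$ be semilocal or quasifinite intervenes. In the semilocal case a stable range condition supplies the elementary transformations needed to perform the decomposition directly. In the quasifinite case one localizes $R$ at each maximal ideal of its center, applies the semilocal version, and patches via a standard local-global principle. The closed chain of inclusions then yields the full commutator formula.
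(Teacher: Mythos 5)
The paper does not actually prove Theorem \ref{51}: its ``proof'' is the single line ``See \cite[Theorem 39]{bak-preusser}'', so there is no in-text argument to compare yours against. Judged on its own terms, your proposal correctly identifies the architecture --- the circular chain $E\subseteq[E,G]\subseteq[C,G]\subseteq E$, with the middle inclusion reduced to $E\subseteq C$ via Lemma \ref{46} and the fact that the principal congruence subgroup is normalized by $EU_{2n+1}(R,\Delta)$ (Remark \ref{imprem}) --- but it does not prove the inclusion $[C,G]\subseteq E$, which is the entire content of the theorem. You reduce it to factoring ${}^{\sigma}\epsilon\cdot\epsilon^{-1}$ into root matrices, observe correctly that this cannot be done by hand because the columns of $\sigma$ are not unimodular, and then invoke ``a stable range condition'' in the semilocal case and ``a standard local-global principle'' in the quasifinite case without carrying out either. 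Those two steps are precisely where all the work lies in \cite{bak-preusser}: one must control denominators in the conjugation formulas under localization at maximal ideals of the center, patch the resulting factorizations, and --- the genuinely delicate point you yourself flag --- verify that the extra short root parameters of the patched factors land in $\Omega^{-\epsilon(k)}$ and not merely in $\Delta^{-\epsilon(k)}$. Citing Lemma \ref{last} gestures at this last issue but does not resolve it, since that lemma only controls the defect of $\circ$ against sums, not the localization step. As written, this is a plan for a proof, not a proof.

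There is also a concrete error in the step $E\subseteq[E,G]$ for extra short root matrices. You propose to split $(y,z)\in\Omega^{-\epsilon(i)}$ ``according to the generating decomposition of $\Omega^{I}_{min}$''; but $\Omega$ is an arbitrary relative odd form parameter between $\Omega^{I}_{min}$ and $\Omega^{I}_{max}$, so a general element of $\Omega$ admits no such decomposition. The standard route instead applies relation (SE2) of Lemma \ref{39} with $x=1$, which exhibits $T_{i}(y\alpha,z\alpha)$, for $\alpha$ a power of $\lambda$ (hence a unit), as $T_{j,-i}(z\alpha)^{-1}[T_{ij}(1),T_{j}(y,z)]$; one then recovers the desired generator using the $\circ$-stability of $\Omega$ and the fact that $z\in I$ (because $\Omega\subseteq\Omega^{I}_{max}\subseteq\tilde I\times I$), so that both factors lie in $E\cdot[E,G]$. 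The short root case via (S4) is fine. Repairing this step still leaves the main gap above.
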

\begin{proof}
See \cite[Theorem 39]{bak-preusser}.
\end{proof}
\section{Sandwich classification for $U_{2n+1}(R,\Delta)$}
In this section $n$ denotes a natural number greater than or equal to $3$ and $(R,\Delta)$ a Hermitian form ring where $R$ is commutative. 
\begin{definition}
Let $\sigma\in U_{2n+1}(R,\Delta)$. Then a matrix of the form $^{\epsilon}\sigma^{\pm 1}$ where $\epsilon\in EU_{2n+1}(R,\Delta)$ is called an {\it elementary (unitary) $\sigma$-conjugate}.
\end{definition}
\begin{theorem}\label{mthm2}
Let $\sigma\in U_{2n+1}(R,\Delta)$ and $i,j,k,l\in\Theta_{hb}$ such that $k\neq\pm l$ and $i\neq \pm j$. Further let $a\in J(\Delta)$. Then 
\begin{enumerate}[(i)]
\item $T_{kl}(\sigma_{ij})$ is a product of $160$ elementary unitary $\sigma$-conjugates,
\item $T_{kl}(\sigma_{i,-i})$ is a product of $320$ elementary unitary $\sigma$-conjugates,
\item $T_{kl}(\sigma_{i0}a)$ is a product of $480$ elementary unitary $\sigma$-conjugates,
\item $T_{kl}(\bar a \mu\sigma_{0j})$ is a product of $480$ elementary unitary $\sigma$-conjugates,
\item $T_{kl}(\sigma_{ii}-\sigma_{jj})$ is a product of $480$ elementary unitary $\sigma$-conjugates,
\item $T_{kl}(\sigma_{ii}-\sigma_{-i,-i})$ is a product of $960$ elementary unitary $\sigma$-conjugates,
\item $T_{k}(q_1(\sigma_{*j}),\lambda^{-(\epsilon(k)+1)/2} q_2(\sigma_{*j}))$ is a product of $1600n+5764$ elementary unitary $\sigma$-conjugates and
\item $T_{k}((\sigma_{00}-\sigma_{jj})a,x)$ is a product of $4800n+16812$ elementary unitary $\sigma$-conjugates for some $x\in R$.
\end{enumerate}
\end{theorem}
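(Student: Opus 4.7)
The plan is to follow the organization of Theorem \ref{mthm1} part by part, replacing the orthogonal root subgroups by their unitary counterparts as given by Lemma \ref{39} and formula (\ref{e3}). Each commutator identity used in the orthogonal proof has a unitary analogue, and the inflation of each bound (by roughly a factor of $20$) is caused by the fact that in the unitary setting relations (S5), (E3) and especially (SE2) each produce additional long or extra short root factors beyond what appears in the orthogonal case; these by-products have to be counted separately.

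For part (i), I would set $\tau:=T_{21}(-\sigma_{23})T_{31}(\sigma_{22})T_{2,-3}(c)$ with $c$ chosen via Lemma \ref{36} so that $(\sigma\tau^{-1})_{2*}=\sigma_{2*}$ and $(\tau^{-1}\sigma^{-1})_{*,-2}=\sigma'_{*,-2}$; writing $\xi:={}^{\sigma}\tau^{-1}$, the second row and the $(-2)$-th column of $\xi$ then reduce to $e_2^t$ and $e_{-2}$. Exactly as in the orthogonal proof one computes
\[
[T_{12}(1),{}^{\tau^{-1}}[T_{32}(1),[\tau,\sigma]]]=T_{32}(\sigma_{23})
\]
using Lemma \ref{pre} together with the relations (S3)--(S5), (SE1)--(SE2). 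Lemma \ref{43} then transports $T_{32}(\sigma_{23})$ to arbitrary $T_{kl}(\sigma_{ij})$. Claims (ii)--(vi) reduce to (i) by the same one-commutator trick as in Theorem \ref{mthm1}: one conjugates $\sigma$ by an appropriate short or extra short root matrix so that a chosen off-diagonal entry of the conjugate equals the desired scalar plus terms already covered by (i). For (iii) and (iv) the conjugating matrices are $T_{-j}(a,0)$ and $T_i(a,0)$ with $a\in J(\Delta)$, which explains the factors $a$ and $\bar a\mu$ in the statement.

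Part (vii) is the main obstacle. Mirroring Steps 1--2 of the orthogonal proof, I set $u':=(0,\dots,0,\sigma'_{-1,-1},-\sigma'_{-1,-2})^t$ and $u:=\sigma^{-1}u'$, and check via Lemma \ref{36} that $u_{-1}=0$ and $q(u)\in\Delta$, so $T_{*,-1}(u)$ is defined. Using (\ref{e3}) to describe $\xi:={}^{\sigma}T_{*,-1}(-u)$, and clearing the $(-3,1),(-3,2)$-entries of $\xi$ by a short $\tau$ supplied by (i), I consider
\[
\zeta:={}^{T_{*,-1}(-u)}\bigl[T_{2,-3}(-x),[T_{*,-1}(u),\sigma]\tau\bigr]=[T_{*,-1}(-u),T_{2,-3}(-x)]\,[T_{2,-3}(-x),\xi\tau].
\]
Expanding via (SE1), (SE2), (S4), (S5) isolates a single extra short root factor $T_3(x\sigma_{01}\sigma_{11},y)$ for an explicit $y$, all other factors being short and countable by (i), (ii), (v). To pass from this to $T_k(q_1(\sigma_{*j}),\lambda^{-(\epsilon(k)+1)/2}q_2(\sigma_{*j}))$ one uses the column identity $\sum_{s\in\Theta}\sigma'_{js}\sigma_{sj}=1$ inside the $\circ$-scalar multiplication of $T_k$, invoking Lemma \ref{last} to keep track of the Heisenberg cross-terms $(0,\sum_{i>j}x_ib\bar x_j-\overline{x_ib\bar x_j}\lambda)$. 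These cross-terms are long root matrices, so by (S5) together with (i) and (ii) they can each be realized as a bounded product of elementary $\sigma$-conjugates. The genuine difficulty lies precisely here: the non-abelian $R^{\bullet}$-module structure on $\h$ forces us to absorb a quadratic number of Heisenberg cross-terms, which is what drives the bound $1600n+5764$.

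Part (viii) finally follows by conjugating $\sigma$ by $T_{-j}(a,0)$: the $(0,j)$-entry of the conjugate is $(\sigma_{jj}-\sigma_{00})a$ plus contributions from $\sigma_{0j}$, $\sigma_{j0}$, $\sigma_{0,-j}$, $\sigma_{j,-j}$. Applying (vii) to this conjugate yields $T_k((\sigma_{jj}-\sigma_{00})a+\text{corrections},\cdot)$, and the correction terms are cancelled by (iii), (iv), (vii) composed with (SE2). A careful bookkeeping of these factors — keeping in mind that (SE2) in the unitary setting contributes both a short and a long root by-product per application — will yield the claimed bound $4800n+16812$.
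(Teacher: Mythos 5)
There is a genuine gap, and it sits at the foundation of the whole argument: your part (i). You claim that, with a suitable $\tau=T_{21}(-\sigma_{23})T_{31}(\sigma_{22})T_{2,-3}(c)$, one gets $[T_{12}(1),{}^{\tau^{-1}}[T_{32}(1),[\tau,\sigma]]]=T_{32}(\sigma_{23})$ ``exactly as in the orthogonal proof''. This fails in the unitary setting. A short root element $T_{ij}(x)$ carries $-\lambda^{(\epsilon(j)-1)/2}\bar x\lambda^{(1-\epsilon(i))/2}$ in position $(-j,-i)$, so the column operations that $\tau^{-1}$ performs on $\sigma^{-1}$ involve the conjugates $\bar c$ of your parameters; one cannot simultaneously arrange $(\sigma\tau^{-1})_{2*}=\sigma_{2*}$ and $(\tau^{-1}\sigma^{-1})_{*,-2}=\sigma'_{*,-2}$ with parameters that are plain entries of $\sigma$. (If your identity held, the bound in (i) would be $8$, not $160$.) The paper's proof of (i) is structurally different: it first extracts only the \emph{twisted} products $T_{kl}(x\bar\sigma_{23}\sigma_{21})$, $T_{kl}(x\bar\sigma_{23}\sigma_{2,-1})$, $T_{kl}(x\bar\sigma_{23}\sigma_{22})$ (Steps 1--3, with $\tau$'s whose parameters are sesquilinear expressions such as $\bar\sigma_{23}\sigma_{21}$ and which include a long-root factor), and then recovers $\sigma_{23}$ itself in Step 4 by forming $\tau=[\sigma^{-1},T_{12}(-\bar\sigma_{23})]$, observing $\tau_{11}\equiv 1$ and $\tau_{12}\equiv\bar\sigma_{23}$ modulo the ideals generated by the Step 1--3 elements, applying Step 3 to ${}^{P_{13}P_{21}}\tau$, and absorbing the error terms. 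That mechanism, not the by-products of (S5)/(SE2), is what inflates $8$ to $160$ and propagates through all later bounds; your proposal contains no substitute for it.

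Two further points. First, in (iii), (iv) and (viii) you conjugate by $T_{-j}(a,0)$ and $T_i(a,0)$; but $(a,0)$ lies in $\Delta^{\pm 1}$ only if $\bar a\mu a=0$, so these are in general not elements of $EU_{2n+1}(R,\Delta)$ at all. You must pick $b$ with $(a,b)\in\Delta^{\epsilon(j)}$ (possible since $a\in J(\Delta)$) and use $T_{-j}(\minus_{\epsilon(j)}(a,b))$, then also cancel the extra contributions coming from $b$. Second, in (vii) your outline stops at $T_k(q(\sigma_{*1}))$; passing to an arbitrary column $j$ via $P_{ij}$ changes $q(\sigma_{*j})$ by a $\Delta_{min}$-type correction when $\epsilon(i)\neq\epsilon(j)$ (Lemma \ref{new}), which must be realized separately and accounts for the difference between $1600n+4804$ and the stated $1600n+5764$.
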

\begin{proof}(i) Let $x\in R$. In Step 1 we show that $T_{kl}(x\bar\sigma_{23}\sigma_{21})$ is a product of $16$ elementary $\sigma$-conjugates. In Step 2 we show that $T_{kl}(x\bar\sigma_{23}\sigma_{2,-1})$ is a product of $16$ elementary $\sigma$-conjugates. In Step 3 we show that $T_{kl}(x\bar\sigma_{23}\sigma_{22})$ is a product of $32$ elementary $\sigma$-conjugates. In Step 4 we use Steps 1-3 in order to prove (i).\\
\\
{\bf Step 1.} Set $\tau:=T_{1,-2}(\bar\sigma_{23}\sigma_{23})T_{3,-2}(-\bar\sigma_{23}\sigma_{21})T_{3,-1}(\bar\lambda\bar\sigma_{23}\sigma_{22})T_{3}(0,\bar\sigma_{22}\sigma_{21}-\bar\lambda\bar\sigma_{21}\sigma_{22})$ and $\xi:={}^{\sigma}\tau^{-1}$. One checks easily that $(\sigma\tau^{-1})_{2*}=\sigma_{2*}$ and $(\tau^{-1}\sigma^{-1})_{*,-2}=\sigma'_{*,-2}$. Hence $\xi_{2*}=e^t_2$ and $\xi_{*,-2}=e_{-2}$. Set  
\begin{align*}
\zeta:={}^{\tau^{-1}}[T_{-2,-1}(1),[\tau,\sigma]]={}^{\tau^{-1}}[T_{-2,-1}(1),\tau\xi]=[\tau^{-1},T_{-2,-1}(1)][T_{-2,-1}(1),\xi],
\end{align*}
the last equality by Lemma \ref{pre}. One checks easily that $[\tau^{-1},T_{-2,-1}(1)]=T_{3,-1}(\bar\sigma_{23}\sigma_{21})T_{1}(z)$ for some $z\in \Delta^{-1}$ and $[T_{-2,-1}(1),\xi]=T_{-2}(x_{-2})\prod\limits_{i\neq 0,\pm 2 }T_{i2}(x_i)$ for some $x_i\in R~(i\neq 0,2)$. Hence \[\zeta=T_{3,-1}(\bar\sigma_{23}\sigma_{21})T_{1}(z)T_{-2}(x_{-2})\prod\limits_{i\neq 0,\pm 2 }T_{i2}(x_i).\]
It follows that $[T_{-1,3}(-x),$ $[T_{-2,3}(1),\zeta]]=T_{-2,3}(x\bar\sigma_{23}\sigma_{21})$ for any $x\in R$. Hence we have shown
\[[T_{-1,3}(-x),[T_{-2,3}(1),{}^{\tau^{-1}}[T_{-2,-1}(1),[\tau,\sigma]]]]=T_{-2,3}(x\bar\sigma_{23}\sigma_{21}).\]
This implies that $T_{-2,3}(x\bar\sigma_{23}\sigma_{21})$ is a product of $16$ elementary $\sigma$-conjugates. It follows from Lemma \ref{43} that $T_{kl}(x\bar\sigma_{23}\sigma_{21})$ is a product of $16$ elementary $\sigma$-conjugates.\\
\\
{\bf Step 2.} Step 2 can be done similarly. Namely one can show that 
\[[T_{-1,3}(-x\bar\lambda),[T_{12}(1),{}^{\tau^{-1}}[T_{-1,2}(1),[\tau,\sigma]]]]=T_{-1,2}(x\bar\sigma_{23}\sigma_{2,-1})\]
for any $x\in R$ where $\tau=T_{21}(\bar\sigma_{23}\sigma_{23})T_{31}(-\bar\sigma_{23}\sigma_{22})T_{3,-2}(\bar\sigma_{23}\sigma_{2,-1})T_{3}(0,-\bar\sigma_{22}\sigma_{2,-1}+\bar\lambda\bar\sigma_{2,-1}\sigma_{22})$.\\
\\
{\bf Step 3.} Set $\tau:=T_{21}(-\bar\sigma_{22}\sigma_{23})T_{31}(\bar\sigma_{22}\sigma_{22})T_{2,-3}(\bar\sigma_{22}\sigma_{2,-1})T_{2}(0,-\bar\sigma_{23}\sigma_{2,-1}+\bar\lambda\bar\sigma_{2,-1}\sigma_{23})$ and $\xi:={}^{\sigma}\tau^{-1}$. One checks easily that $(\sigma\tau^{-1})_{2*}=\sigma_{2*}$ and $(\tau^{-1}\sigma^{-1})_{*,-2}=\sigma'_{*,-2}$. Hence $\xi_{2*}=e^t_2$ and $\xi_{*,-2}=e_{-2}$. Set  
\begin{align*}
\zeta:={}^{\tau^{-1}}[T_{32}(1),[\tau,\sigma]]={}^{\tau^{-1}}[T_{32}(1),\tau\xi]=[\tau^{-1},T_{32}(1)][T_{32}(1),\xi].
\end{align*}
One checks easily that $\psi:=[\tau^{-1},T_{32}(1)]=T_{31}(-\bar\sigma_{22}\sigma_{23})T_{3}(y)T_{3,-2}(z)$ for some $y\in\Delta^{-1}$ and $z\in R$ and
$\theta:=[T_{32}(1),\xi]=T_{-2}(x_{-2})\prod\limits_{i\neq 0,\pm 2 }T_{i2}(x_i)$ for some $x_i\in R~(i\neq 0,2)$. Set 
\[\chi:={}^{\psi^{-1}}[T_{12}(1),\zeta]={}^{\psi^{-1}}[T_{12}(1),\psi\theta]=[\psi^{-1},T_{12}(1)][T_{12}(1),\theta].\]
One checks easily that $[\psi^{-1},T_{12}(1)]=T_{32}(\bar\sigma_{22}\sigma_{23})T_{3}(a)T_{3,-1}(b)$ for some $a\in\Delta^{-1}$ and $b\in R$ and $[T_{12}(1),\theta]=T_{-2}(c)$ for some $c\in \Delta$. Hence $\chi=T_{32}(\bar\sigma_{22}\sigma_{23})T_{3}(a)T_{3,-1}(b)T_{-2}(c)$. It follows that \[[T_{-2,3}(\bar x),[T_{2,-1}(1),\chi]]=T_{-2,-1}(-\bar x\bar\sigma_{22}\sigma_{23})\overset{(R1)}{=}T_{12}(x\bar\sigma_{23}\sigma_{22})\]
for any $x\in R$. Hence we have shown
\[[T_{-2,3}(\bar x),[T_{2,-1}(1),{}^{\psi^{-1}}[T_{12}(1),{}^{\tau^{-1}}[T_{32}(1),[\tau,\sigma]]]]]=T_{12}(x\bar\sigma_{23}\sigma_{22}).\]
This implies that $T_{12}(x\bar\sigma_{23}\sigma_{22})$ is a product of $32$ elementary $\sigma$-conjugates. It follows from Lemma \ref{43} that $T_{kl}(x\bar\sigma_{23}\sigma_{22})$ is a product of $32$ elementary $\sigma$-conjugates.\\
\\
{\bf Step 4.} Set $I:=I(\{\bar\sigma_{23}\sigma_{21},\overline{\bar\sigma_{23}\sigma_{2,-1}}\})$, $J:=I(\{\bar\sigma_{23}\sigma_{21},\overline{\bar\sigma_{23}\sigma_{2,-1}},\bar\sigma_{23}\sigma_{22}\})$ and
\[\tau:=[\sigma^{-1},T_{12}(-\bar\sigma_{23})]=(e-\sigma'_{*1}\bar\sigma_{23}\sigma_{2*}+\sigma_{*,-2}'\sigma_{23}\sigma_{-1,*})T_{12}(\bar\sigma_{23}).\]
Clearly
\[\tau_{11}=1-\sigma'_{11}\bar\sigma_{23}\sigma_{21}+\sigma_{1,-2}'\sigma_{23}\sigma_{-1,1}=1-\sigma'_{11}\underbrace{\bar\sigma_{23}\sigma_{21}}_{\in I}+\bar\lambda\underbrace{\bar\sigma_{2,-1}\sigma_{23}}_{\in I}\sigma_{-1,1},\]
the last equality by Lemma \ref{36}, and 
\[\tau_{12}=-\sigma'_{11}\bar\sigma_{23}\sigma_{22}+\sigma_{1,-2}'\sigma_{23}\sigma_{-1,2}+\tau_{11}\bar\sigma_{23}=-\sigma'_{11}\underbrace{\bar\sigma_{23}\sigma_{22}}_{\in J}+\bar\lambda\underbrace{\bar\sigma_{2,-1}\sigma_{23}}_{\in J}\sigma_{-1,2}+\tau_{11}\bar\sigma_{23},\]
again the last equality by Lemma \ref{36}.
Hence $\tau_{11}\equiv 1 \bmod I$ and $\tau_{12}\equiv \bar\sigma_{23}\bmod J$ (note that $I\subseteq J$). Set 
$\zeta:={}^{P_{13}P_{21}}\tau$. Then $\zeta_{22}=\tau_{11}$ and $\zeta_{23}=\tau_{12}$ and hence $\bar\zeta_{23}\zeta_{22}\equiv \sigma_{23}\bmod I+\bar J$. Applying Step 3 above to $\zeta$, we get that $T_{kl}(\bar\zeta_{23}\zeta_{22})$ is a product of $32$ elementary $\zeta$-conjugates. Since any elementary $\zeta$-conjugate is a product of $2$ elementary $\sigma$-conjugates, it follows that $T_{kl}(\bar\zeta_{23}\zeta_{22})$ is a product of $64$ elementary $\sigma$-conjugates. Thus, by Steps 1-3, $T_{kl}(\sigma_{23})$ is a product of $64+16+16+16+16+32=160$ elementary $\sigma$-conjugates. Since one can bring $\sigma_{ij}$ to position $(3,2)$ conjugating by monomial matrices from $EU_{2n+1}(R,\Delta)$, the assertion of (i) follows.\\
\\
(ii)-(vi) See the proof of Theorem \ref{mthm1}. In the proof of (iii) replace the matrix $T_{-j}(-1)$ by $T_{-j}(\minus_{\epsilon(j)}(a,b))$ where $b$ is chosen such that $(a,b)\in \Delta^{\epsilon(j)}$. In the proof of (iv) replace the matrix $T_{i}(-1)$ by $T_{i}(\minus_{-\epsilon(i)}(\lambda^{(\epsilon(i)-1)/2}$ $ a,b))$ where $b$ is chosen such that $(a,b)\in \Delta^{-\epsilon(i)}$.\\
\\
(vii) By Lemma \ref{43} it suffices to consider the case $\epsilon(k)=-1$. Set $m:=160$. In Step 1 we show that for any $x\in R$ the matrix $T_{k}(q(\sigma_{*1})\circ \sigma_{11}x)$ is a product of $(2n+19)m+4$ elementary $\sigma$-conjugates. In Step 2 we use Step 1 in order to prove (vii).\\
\\
{\bf Step 1.} Set $u':=\begin{pmatrix}0&\dots&0&\sigma'_{-1,-1}&-\sigma'_{-1,-2}\end{pmatrix}^t=\begin{pmatrix}0&\dots&0&\bar\sigma_{11}&-\bar\sigma_{21}\end{pmatrix}^t\in M$ and $u:=\sigma^{-1}u'\in M$. Then clearly $u_{-1}=0$. Further $q(u)=q(\sigma^{-1}u')\equiv q(u')=(0,0) \bmod \Delta$ and hence $u$ is isotropic. Set
\[
\xi:={}^{\sigma}T_{*,-1}(-u)\overset{(\ref{e3})}{=}e-\sigma u\widetilde{\sigma_{*1}}+ \sigma_{*1} \bar\lambda\widetilde{\sigma u}=e-u'\widetilde{\sigma_{*1}}+ \sigma_{*1}\bar\lambda \widetilde{u'}.
\]
Then
\begin{align*}
\xi=
\arraycolsep=8pt\def\arraystretch{1.5}\left(\begin{array}{cccccc|c|cccccc}
1-\sigma_{11}\sigma_{21}&\sigma_{11}\sigma_{11}&&&&&&&&&&&\\
-\sigma_{21}\sigma_{21}&1+\sigma_{21}\sigma_{11}&&&&&&&&&&&\\
-\sigma_{31}\sigma_{21}&\sigma_{31}\sigma_{11}&1&&&&&&&&&&\\
-\sigma_{41}\sigma_{21}&\sigma_{41}\sigma_{11}&&1&&&&&&&&\\
\vdots&\vdots&&&\ddots&&&&&&&&\\
-\sigma_{n1}\sigma_{21}&\sigma_{n1}\sigma_{11}&&&&1&&&&&&&\\
\hline-\sigma_{01}\sigma_{21}&\sigma_{01}\sigma_{11}&&&&&1&&&&&\\
\hline-\sigma_{-n,1}\sigma_{21}&\sigma_{-n,1}\sigma_{11}&&&&&&1&&&&\\
\vdots&\vdots&&&&&&&\ddots&&&&\\
-\sigma_{-4,1}\sigma_{21}&\sigma_{-4,1}\sigma_{11}&&&&&&&&1&&&\\
-\sigma_{-3,1}\sigma_{21}&\sigma_{-3,1}\sigma_{11}&&&&&&&&&1&&\\
-\lambda\bar\beta&\alpha&*&*&\dots&*&*&*&\dots&*&-\bar\sigma_{11}\bar\sigma_{31}&*&*\\
\gamma&\beta&*&*&\dots&*&*&*&\dots&*&\bar\sigma_{21}\bar\sigma_{31}&*&*
\end{array}\right)
\end{align*}
where $\alpha=\sigma_{-2,1}\sigma_{11}-\lambda\bar\sigma_{11}\bar\sigma_{-2,1}$, $\beta=\sigma_{-1,1}\sigma_{11}+\lambda\bar\sigma_{21}\bar\sigma_{-2,1}$ and $\gamma=-\sigma_{-1,1}\sigma_{21}+\lambda\bar\sigma_{21}\bar\sigma_{-1,1}$. Set \[\tau:=T_{-3,1}(\sigma_{-3,1}\sigma_{21})T_{-3,2}(-\sigma_{-3,1}\sigma_{11}).\] 
It follows from (i) that $\tau$ is a product of $2m$ elementary $\sigma$-conjugates. Clearly
\begin{align*}
\xi\tau=
\arraycolsep=8pt\def\arraystretch{1.5}\left(\begin{array}{cccccc|c|cccccc}
1-\sigma_{11}\sigma_{21}&\sigma_{11}\sigma_{11}&&&&&&&&&&&\\
-\sigma_{21}\sigma_{21}&1+\sigma_{21}\sigma_{11}&&&&&&&&&&&\\
-\sigma_{31}\sigma_{21}&\sigma_{31}\sigma_{11}&1&&&&&&&&&&\\
-\sigma_{41}\sigma_{21}&\sigma_{41}\sigma_{11}&&1&&&&&&&&\\
\vdots&\vdots&&&\ddots&&&&&&&&\\
-\sigma_{n1}\sigma_{21}&\sigma_{n1}\sigma_{11}&&&&1&&&&&&&\\
\hline-\sigma_{01}\sigma_{21}&\sigma_{01}\sigma_{11}&&&&&1&&&&&\\
\hline-\sigma_{-n,1}\sigma_{21}&\sigma_{-n,1}\sigma_{11}&&&&&&1&&&&\\
\vdots&\vdots&&&&&&&\ddots&&&&\\
-\sigma_{-4,1}\sigma_{21}&\sigma_{-4,1}\sigma_{11}&&&&&&&&1&&&\\
0&0&&&&&&&&&1&&\\
*&\alpha+\delta&0&*&\dots&*&*&*&\dots&*&*&*&*\\
*&\beta-\epsilon&0&*&\dots&*&*&*&\dots&*&*&*&*
\end{array}\right)
\end{align*}
where $\delta=\bar\sigma_{11}\bar\sigma_{31}\sigma_{-3,1}\sigma_{11}$ and $\epsilon=\bar\sigma_{21}\bar\sigma_{31}\sigma_{-3,1}\sigma_{11}$. Let $x\in R$ and set  
\begin{align*}
\zeta:={}&^{T_{*,-1}(-u)}[T_{2,-3}(-x),[T_{*,-1}(u),\sigma]\tau]\\
={}&^{T_{*,-1}(-u)}[T_{2,-3}(-x),T_{*,-1}(u)\xi\tau]
\\=&[T_{*,-1}(-u),T_{2,-3}(-x)][T_{2,-3}(-u),\xi\tau].
\end{align*}
Clearly $\zeta$ is a product of $4m+4$ elementary $\sigma$-conjugates. One checks easily that 
\begin{align*}
&[T_{*,-1}(-u),T_{2,-3}(-x)]\\
=&T_{1}(0,\bar\lambda(
-\bar xu_{-2}\bar u_{-3}+\lambda\overline{\bar xu_{-2}\bar u_{-3}}))T_{1,-2}(\bar\lambda\bar x \bar u_{-3})T_{1,-3}(-x\bar u_{-2})\\
=&T_{1}(0,\bar\lambda(a+\lambda\bar a))T_{1,-2}(b)T_{1,-3}(-x(\sigma_{11}\sigma_{22}-\sigma_{12}\sigma_{21}))
\end{align*}
for some $a,b\in I(\{\sigma_{21},\sigma_{23}\})$. Further
\[[T_{2,-3}(-x),\xi\tau]=(\prod\limits_{\substack{p=1,\\p\neq 3,0}}^{-4}T_{p,-3}(x\sigma_{p1}\sigma_{11}))T_{-2,-3}(x(\alpha+\delta))T_{-1,-3}(x(\beta-\epsilon))T_{3}(y)\]
where $y=(q_1(\sigma_{*1}),\bar\lambda q_2(\sigma_{*1}))\circ \sigma_{11}x+(0,c-\bar\lambda\bar c)$ for some $c\in I(\{\sigma_{31},\sigma_{-2,1}\})$. Hence
\begin{align*}
\zeta=&T_{1}(0,\bar\lambda(a+\lambda\bar a))T_{1,-2}(b)T_{1,-3}(-x(\sigma_{11}(\sigma_{22}-\sigma_{11})-\sigma_{12}\sigma_{21}))\cdot\\
&\cdot (\prod\limits_{\substack{p=2,\\p\neq 3,0}}^{-4}T_{p,-3}(x\sigma_{p1}\sigma_{11}))T_{-2,-3}(x(\alpha+\delta))T_{-1,-3}(x(\beta-\epsilon))T_{3}(y).
\end{align*}
It follows from (i), (ii) and (iii) and relation (S5) in Lemma \ref{39} that $T_{3}(y)$ is a product of $4m+4+4m+2m+4m+(2n-5)m+3m+3m=(2n+15)m+4$ elementary $\sigma$-conjugates. By (i) and relation (S5) in Lemma \ref{39}, $T_{3}(0,-c+\bar\lambda\bar c)$ is a product of $4m$ elementary $\sigma$-conjugates. Hence $T_{3}((q_1(\sigma_{*1}),\bar\lambda q_2(\sigma_{*1}))\circ \sigma_{11}x)=T_{3}(y)T_{3}(0,-c+\bar\lambda\bar c)$ is a product of $(2n+19)m+4$ elementary $\sigma$-conjugates. It follows from Lemma \ref{43} that $T_{k}(q(\sigma_{*1})\circ \sigma_{11}x)$ is a product of $(2n+19)m+4$ elementary $\sigma$-conjugates.\\
\\
{\bf Step 2.} By Lemma \ref{last} we have
\begin{align*}
&T_{k}(q(\sigma_{*1}))\\
=&T_{k}(q(\sigma_{*1})\circ\sum\limits_{s=1}^{-1}\sigma'_{1s}\sigma_{s1})\\
=&T_{k}((\plus\limits_{s=1}^{-1} (q(\sigma_{*1})\circ\sigma'_{1s}\sigma_{s1})\+(0,\sum\limits_{\substack{s,t=1,\\s>t}}^{-1} \sigma'_{1s}\sigma_{s1}q_2(\sigma_{*1})\overline{\sigma'_{1t}\sigma_{t1}}-\overline{\sigma'_{1s}\sigma_{s1}q_2(\sigma_{*1})\overline{\sigma'_{1t}\sigma_{t1}}}\lambda))\\
=&\underbrace{\prod\limits_{s=1}^{-1}T_k(q(\sigma_{*1})\circ\sigma'_{1s}\sigma_{s1})}_{A:=}\underbrace{T_k(0,\sum\limits_{\substack{s,t=1,\\s>t}}^{-1} \sigma'_{1s}\sigma_{s1}q_2(\sigma_{*1})\overline{\sigma'_{1t}\sigma_{t1}}-\overline{\sigma'_{1s}\sigma_{s1}q_2(\sigma_{*1})\overline{\sigma'_{1t}\sigma_{t1}}}\lambda)}_{B:=}
\end{align*}
since $q(\sigma_{*1})\in \Delta\subseteq \Delta_{max}$. By Step 1, $T_{k}(q(\sigma_{*1})\circ \sigma'_{11}\sigma_{11})$ is a product of $(2n+19)m+4$ elementary $\sigma$-conjugates. By (i) and relation (SE2) in Lemma \ref{39}, $T_k(q(\sigma_{*1})\circ\sigma'_{1s}\sigma_{s1})$ is a product of $3m$ elementary $\sigma$-conjugates if $s\neq \pm 1,0$. By (ii) and relation (SE2), $T_k(q(\sigma_{*1})\circ\sigma'_{1,-1}\sigma_{-1,1})$ is a product of $6m$ elementary $\sigma$-conjugates. By (iv) and relation (SE2), $T_k(q(\sigma_{*1})\circ\sigma'_{10}\sigma_{01})$ is a product of $9m$ elementary $\sigma$-conjugates (note that $\sigma'_{10}=\bar\lambda\bar\sigma_{0,-1}\mu$ by Lemma \ref{36}). Hence $A$ is a product of $(2n+19)m+4+(2n-2)\cdot 3m+6m+9m=(8n+28)m+4$ elementary $\sigma$-conjugates. On the other hand $B=T_{k}(0,x-\lambda\bar x)$ for some $x\in I(q_2(\sigma_{*1}))$. Since $q_2(\sigma_{*1})=\sum\limits_{r=1}^n\bar\sigma_{r1}\sigma_{-r,1}$, it follows from (i), (ii) and relation (S5) in Lemma \ref{39} that $B$ is a product of $4m+(n-1)\cdot 2m=(2n+2)m$ elementary $\sigma$-conjugates. Hence $T_{k}(q(\sigma_{*1}))$ is a product of $(10n+30)m+4=1600n+4804$ elementary $\sigma$-conjugates. The assertion of (vii) follows now from Lemma \ref{new}.\\
\\
(viii) By Lemma \ref{43} it suffices to consider the case $\epsilon(k)=-1$. Choose a $b\in R$ such that $(a,b)\in \Delta^{\epsilon(j)}$. One checks easily that the entry of $\xi:={}^{T_{-j}(a,b)}\sigma$ at position $(0,j)$ equals $(\sigma_{jj}-\sigma_{00})a+\sigma_{0j}-\sigma_{j0}a^2+\sigma_{0,-j}b+\sigma_{j,-j}ba^2$. By applying (vii) to $\xi$ we get that $T_{k}(q(\xi_{*j}))=T_{k}((\sigma_{jj}-\sigma_{00})a+\sigma_{0j}-\sigma_{j0}a^2+\sigma_{0,-j}b+\sigma_{j,-j}ba^2, x_1)$ is a product of $1600n+4804$ elementary $\sigma$-conjugates where $x_1$ is some ring element. By applying (vii) to $\sigma$ we get that $T_{k}(\minus q(\sigma_{*j}))=T_k(-\sigma_{0j},x_2)$ and $T_{k}(\minus q(\sigma_{*,-j})\circ b)=T_{k}(-\sigma_{0,-j}b,x_3)$ each are a product of $1600n+4804$ elementary $\sigma$-conjugates where $x_2,x_3$ are some ring elements. By (iii) and relation (SE2) in Lemma \ref{39}, $T_{k}(\sigma_{j0}a^2,x_4)$ is a product of $3\cdot 480=1440$ elementary $\sigma$-conjugates where $x_4$ is some ring element. By (ii) and relation (SE2) in Lemma \ref{39}, $T_{k}(-\sigma_{j,-j}ba^2,x_5)$ is a product of $3\cdot 320=960$ elementary $\sigma$-conjugates where $x_5$ is some ring element. It follows that $=T_{k}((\sigma_{jj}-\sigma_{00})a+\sigma_{0j}-\sigma_{j0}a^2+\sigma_{0,-j}b+\sigma_{j,-j}ba^2,x_1)T_k(-\sigma_{0j},x_2)T_{k}(\sigma_{j0}a^2,x_4)T_{k}(-\sigma_{0,-j}b,x_3)T_{k}(-\sigma_{j,-j}ba^2,x_5)=T_{k}((\sigma_{jj}-\sigma_{00})a,x)$ is a product of $3\cdot(1600n+4804)+1440+960=4800n+16812$ elementary $\sigma$-conjugates where $x$ is some ring element.
\end{proof}

As a corollary we get the Sandwich Classification Theorem for $U_{2n+1}(R,\Delta)$.
\begin{corollary}
Let $H$ be a subgroup of $U_{2n+1}(R,\Delta)$. Then $H$ is normalized by $EU_{2n+1}(R,\Delta)$ iff 
\begin{equation}
EU_{2n+1}((R,\Delta),(I,\Omega))\subseteq H\subseteq CU_{2n+1}((R,\Delta),(I,\Omega))
\end{equation}
for some odd form ideal $(I,\Omega)$ of $(R,\Delta)$.
\end{corollary}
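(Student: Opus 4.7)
The plan is to mirror the proof of the orthogonal Sandwich Classification Theorem in Section \ref{sec6}, using Theorem \ref{mthm2} in place of Theorem \ref{mthm1}. For the forward direction, assume that $H$ is normalized by $EU_{2n+1}(R,\Delta)$ and set
\[I:=\{x\in R\mid T_{12}(x)\in H\}\quad\text{and}\quad\Omega:=\{(y,z)\in\Delta\mid T_{-1}(y,z)\in H\}.\]
Using the commutator relations of Lemma \ref{39} and the $EU_{2n+1}(R,\Delta)$-stability of $H$, I would first verify that $I$ is an involution-invariant ideal of $R$ and that $\Omega$ is an $R^{\bullet}$-submodule of $\h$ satisfying $\Omega^I_{min}\subseteq\Omega\subseteq\Omega^I_{max}$, so that $(I,\Omega)$ is an odd form ideal of $(R,\Delta)$. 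The lower inclusion $EU_{2n+1}((R,\Delta),(I,\Omega))\subseteq H$ is then routine via Lemma \ref{43}: every $(I,\Omega)$-elementary matrix is $EU_{2n+1}(R,\Delta)$-conjugate to some $T_{12}(x)\in H$ or $T_{-1}(y,z)\in H$ (the twist factor $\lambda^{(\epsilon(i)-\epsilon(k))/2}$ in Lemma \ref{43}(iii) exactly absorbing the distinction between $\Omega$ and $\Omega^{-1}$), and the normal closure in $EU_{2n+1}(R,\Delta)$ remains inside $H$ by hypothesis.

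The heart of the argument is the upper inclusion $H\subseteq CU_{2n+1}((R,\Delta),(I,\Omega))$. For $\sigma\in H$, every elementary $\sigma$-conjugate lies in $H$, so by Theorem \ref{mthm2} each of the matrices produced in items (i)--(viii) lies in $H$. Matching these against the definitions of $I$ and $\Omega$, items (i), (ii), (v), (vi) yield $\sigma_{ij}\in I$ for $i\neq j\in\Theta_{hb}$ and $\sigma_{ii}-\sigma_{jj}\in I$; items (iii), (iv) applied for arbitrary $a\in J(\Delta)$ yield $\sigma_{i0}\in I_0$ and $\sigma_{0j}\in\tilde I$; and items (vii), (viii), combined with Lemma \ref{new} to move $q$-values across coordinates, yield $q(\sigma_{*j})\in\Omega$ for every $j\in\Theta_{hb}$ and $\sigma_{00}-\sigma_{jj}\in\tilde I_0$. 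By Lemma \ref{46} and Remark \ref{47} these place $\sigma$ in $NU_{2n+1}((R,\Delta),(I,\Omega))$. Using the identity $[\sigma,\epsilon_1\epsilon_2]=[\sigma,\epsilon_1]\cdot{}^{\epsilon_1}[\sigma,\epsilon_2]$ it suffices to verify the $U_{2n+1}((R,\Delta),(I,\Omega))$-conditions on the commutators $[\sigma,\epsilon]$ with $\epsilon$ a single elementary generator; the entry conditions above and the inclusion $\Omega^I_{min}\subseteq\Omega$ suffice, since the $q$-values of such commutators automatically land in $\Omega^I_{min}$. This shows $\sigma\in CU_{2n+1}((R,\Delta),(I,\Omega))$.

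For the converse direction the standard commutator formula in Theorem \ref{51} yields
\[[EU_{2n+1}(R,\Delta),H]\subseteq[EU_{2n+1}(R,\Delta),CU_{2n+1}((R,\Delta),(I,\Omega))]=EU_{2n+1}((R,\Delta),(I,\Omega))\subseteq H,\]
so $\epsilon h\epsilon^{-1}=[\epsilon,h]h\in H$ for any $\epsilon\in EU_{2n+1}(R,\Delta)$ and $h\in H$, and $H$ is normalized by $EU_{2n+1}(R,\Delta)$. The step I expect to be most delicate is the $\Omega$-side of the upper inclusion: Theorem \ref{mthm2}(vii) produces $T_k(q_1(\sigma_{*j}),\lambda^{-(\epsilon(k)+1)/2}q_2(\sigma_{*j}))\in H$ with a twist factor that must be aligned with both the definition of $\Omega^{-\epsilon(k)}$ and the $\epsilon(i),\epsilon(j)$-dependent correction terms appearing in Lemma \ref{new}, and item (viii) delivers a pair $((\sigma_{00}-\sigma_{jj})a,x)$ with an implicit second coordinate $x$ that must be checked to land in the correct component of $\Omega^{-\epsilon(k)}$.
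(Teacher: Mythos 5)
Your setup (the definitions of $I$ and $\Omega$, the lower inclusion, and the converse via Theorem \ref{51}) agrees with the paper, but the upper inclusion contains a genuine gap at exactly the point you yourself flag as delicate. Having reduced to a single elementary generator $\epsilon$, you must show that $\tau:=[\sigma,\epsilon]$ satisfies condition (ii) of Lemma \ref{46}, i.e. $q(\tau_{*j})\in\Omega$ for all $j\in\Theta_{hb}$ and $(q(\tau_{*0})\minus(1,0))\circ a\in\Omega$ for all $a\in J(\Delta)$. Your claim that ``the $q$-values of such commutators automatically land in $\Omega^I_{min}$'' is false in general. The entry conditions coming from Theorem \ref{mthm2}(i)--(viii) and Lemma \ref{50} only give $\tau\in U_{2n+1}((R,\Delta),(I,\Omega^I_{max}))$; for instance $q_1(\tau_{*j})=\tau_{0j}$ is only known to lie in $\tilde I$, whereas every element of $\Omega^I_{min}$ has first component in $J(\Delta)I\subseteq I$, and $\tilde I$ may be strictly larger than $I$. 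If the $q$-values of such commutators landed in $\Omega^I_{min}$ automatically, then $CU_{2n+1}((R,\Delta),(I,\Omega))$ would coincide with $CU_{2n+1}((R,\Delta),(I,\Omega^I_{max}))$ for every relative odd form parameter $\Omega$ of level $I$, which is not the case; the dependence on $\Omega$ is precisely what this step has to capture.

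The paper closes this gap differently: since $H$ is normalized by $EU_{2n+1}(R,\Delta)$, the commutator $\tau=[\sigma,\epsilon]$ again lies in $H$, so one applies Theorem \ref{mthm2}(vii) to $\tau$ itself (with $k=-1$) to get $T_{-1}(q(\tau_{*j}))\in H$, whence $q(\tau_{*j})\in\Omega$ by the very definition of $\Omega$; no appeal to $\Omega^I_{min}$ is needed or possible. The remaining condition on the zero column, $(q(\tau_{*0})\minus(1,0))\circ a\in\Omega$, is not addressed in your proposal at all; the paper derives it from Theorem \ref{mthm2}(viii) together with an external result (\cite[Lemma 63]{bak-preusser}). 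Your explicit verification that $(I,\Omega)$ is an odd form ideal is a welcome addition (the paper asserts this without proof), and the remainder of your argument is sound, but as written the $\Omega$-side of the upper inclusion does not go through.
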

\begin{proof}
First suppose that $H$ is is normalized by $EU_{2n+1}(R,\Delta)$. Let $(I,\Omega)$ be the odd form ideal of $(R,\Delta)$ defined by $I:=\{x\in R\mid T_{12}(x)\in H\}$ and $\Omega:=\{(y,z)\in \Delta\mid T_{-1}(y,z)\in H\}$. Then clearly $EU_{2n+1}((R,\Delta),(I,\Omega))\subseteq H$. It remains to show that $H\subseteq CU_{2n+1}((R,\Delta),(I,\Omega))$, i.e. that if $\sigma\in H$ and $\epsilon\in EU_{2n+1}(R,\Delta)$, then $[\sigma,\epsilon]\in U_{2n+1}((R,\Delta),(I,\Omega))$. Since $U_{2n+1}((R,\Delta),(I,\Omega))$ is normalized by $EU_{2n+1}(R,\Delta)$ (see Remark \ref{imprem} (b)), we can assume that $\epsilon$ is an elementary matrix. By the previous theorem and Lemma \ref{50}, we have $\sigma\in CU_{2n+1}((R,\Delta),(I,\Omega^I_{max}))$ and therefore $[\sigma,\epsilon]\in U_{2n+1}((R,\Delta),(I,\Omega_{max}^I))$. Hence, by Lemma \ref{46} it remains to show that $q([\sigma,\epsilon]_{*j})\in\Omega~\forall j\in \Theta_{hb}$ and $(q([\sigma,\epsilon]_{*0})\minus (1,0))\circ a\in\Omega~\forall a\in J(\Delta)$. But applying the previous theorem to $[\sigma,\epsilon]$ we get that $q([\sigma,\epsilon]_{*j})\in\Omega~\forall j\in \Theta_{hb}$. That $(q([\sigma,\epsilon]_{*0})\minus (1,0))\circ a\in\Omega~\forall a\in J(\Delta)$ follows from the previous theorem and \cite[Lemma 63]{bak-preusser}. Suppose now that (4) holds for some odd form ideal $(I,\Omega)$. Then it follows from the standard commutator formula in Theorem \ref{51} that $H$ is normalized by $EU_{2n+1}(R,\Delta)$.
\end{proof}

\end{document}